\documentclass[a4paper]{amsart}
\usepackage{graphicx}

\newtheorem{theorem}{Theorem}[section]
\newtheorem{lemma}[theorem]{Lemma}
\newtheorem{proposition}[theorem]{Proposition}
\theoremstyle{definition}
\newtheorem{definition}[theorem]{Definition}

\newcommand{\breadth}{\operatorname{breadth}}
\begin{document}

\title[MUTATION INVARIANCE OF THE ARC INDEX 
FOR SOME MONTESINOS KNOTS]{MUTATION INVARIANCE OF THE ARC INDEX  \\
FOR SOME MONTESINOS KNOTS}

\author{GYO TAEK JIN $\dagger$ and HO LEE $\ddagger$}

\address{Deparment of Mathematical Sciences, KAIST, Daejeon 34141, Korea \newline $\dagger$ trefoil@kaist.ac.kr \\ $\ddagger$  figure8@kaist.ac.kr}

\begin{abstract}
For the alternating knots or links, mutations do not change the arc index. In the case of nonalternating knots, some  semi-alternating knots or links have this property. We mainly focus on the problem of mutation invariance of the arc index for nonalternating knots which are not semi-alternating. In this paper, we found families of infinitely many mutant pairs/triples of Montesinos knots with the same arc index.
\end{abstract}

\keywords{knot, arc index, mutation, non semi-alternating}

\subjclass[2000]{57M25, 57M27}
\maketitle

\section{Introduction}	
There are many different ways to describe an arc presentation~\cite{Cromwell-book}. In this paper, we use the following definition~\cite{1}. Let $D$ be a diagram of a link $L$. Assume that there is a simple closed curve $C$ meeting $D$ in $n$ distinct points which divide $D$ into $n$ arcs $\alpha_1, \cdots, \alpha_n$ with the following rules:

\begin{enumerate}
\item Each $\alpha_i$ has no self crossing.
\item If $\alpha_i$ crosses over $\alpha_j$ at a crossing then $i>j$ and it crosses over $\alpha_j$ at any other crossings with $\alpha_j$.
\item For each $i$, there is an embedded disk $d_i$ such that $\partial d_i = C$ and $\alpha_i \subset d_i$.
\item For distinct $i$ and $j$, $d_i \cap d_j = C$.
\end{enumerate}

\begin{definition}
The pair $(D,C)$ which satisfies above properties is called an \emph{arc presentation} of $L$ with $n$ \emph{arcs} and $C$ is called the \emph{binding circle} of the arc presentation. Given a link $L$, the minimal number of arcs among all arc presentations of $L$ is called the \emph{arc index} of $L$ and is denoted by $\alpha(L)$.
\end{definition}

Cromwell showed that every link diagram is isotopic to a grid diagram and thus has an arc presentation~\cite{2}.
By removing a point $P$ from $C$ away from $L$, we may identify $C \backslash P$ with the $z$-axis and each $d_i \backslash P$ with a vertical half plane along the $z$-axis. This shows that an arc presentation is equivalent to an `open-book decomposition' as shown in Figure~\ref{fig1}. 
\begin{figure}[th]
  \centering
    \includegraphics[height=3cm]{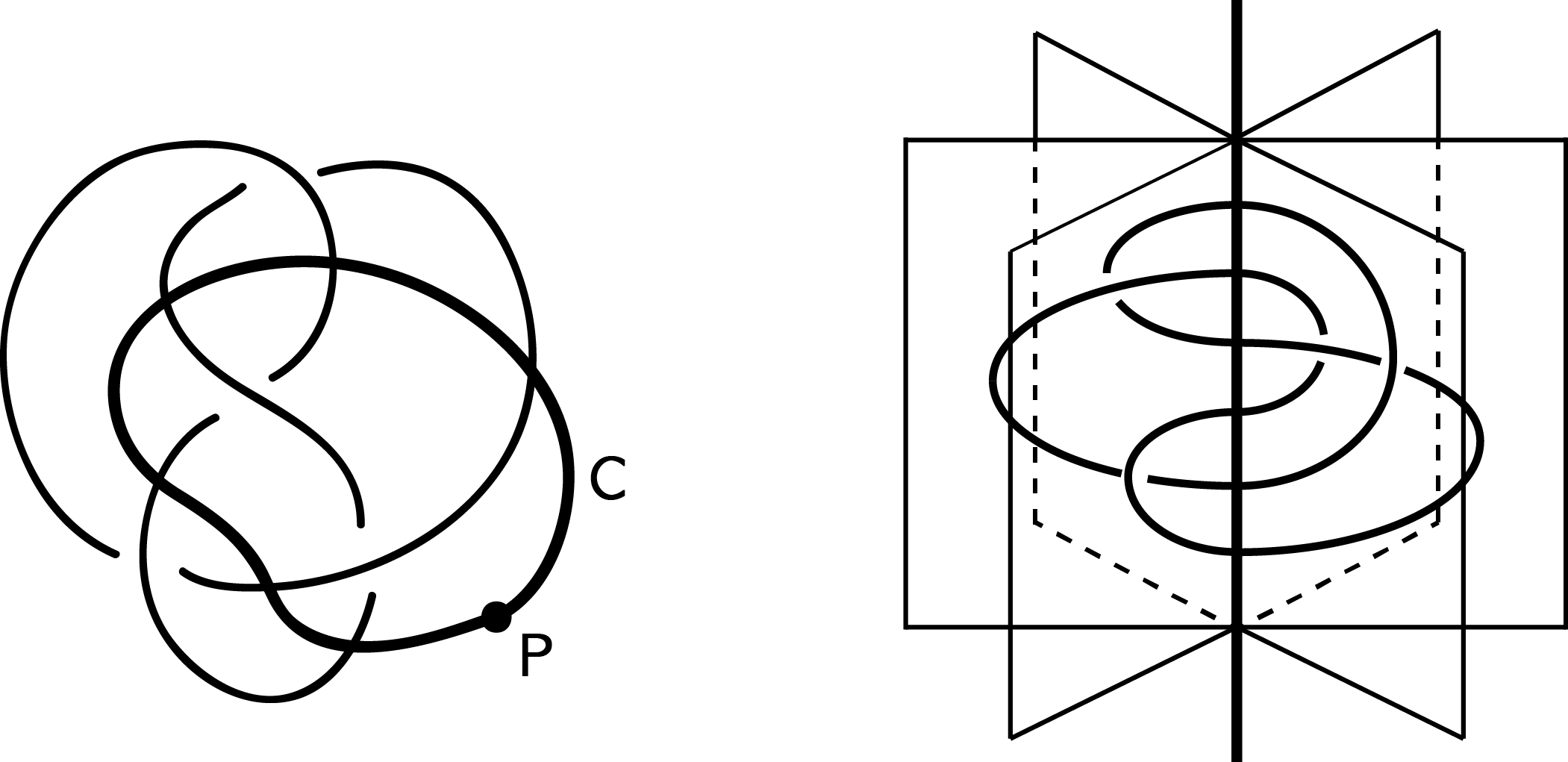}
\vspace*{8pt}
  \caption{An arc presentation of the figure eight knot}\label{fig1}
\end{figure}

\emph{Mutation} is an operation on a knot  which may change the knot type. The most commonly used description of mutation is combinatorial, arising from Conway's definition of a tangle~\cite{Conway-1970}. Suppose that a knot $K$ can be decomposed into two  2-string tangles. Suppose that one of them has four ends at NW, NE, SE and SW of a circle around it. We rotate this tangle about the east-west axis, the north-south axis, or the vertical axis through $180^\circ$, as shown in  Figure~\ref{fig2}. A new diagram thus obtained corresponds to a possibly new knot, called a \emph{mutant} of the given one. 

\begin{figure}[th]
  \centering
		\includegraphics[height=3cm]{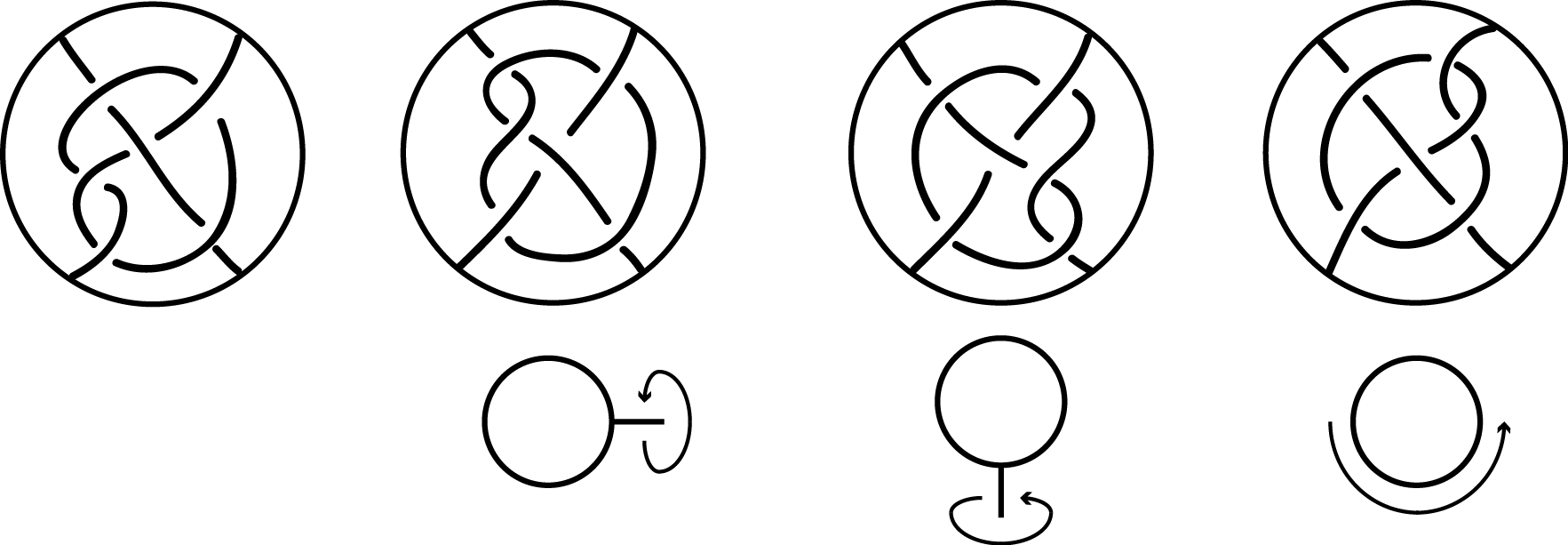}
\vspace*{8pt}
  \caption{Mutations}\label{fig2}
\end{figure}

\begin{figure}[h]
  \centering
		\includegraphics[height=2.5cm]{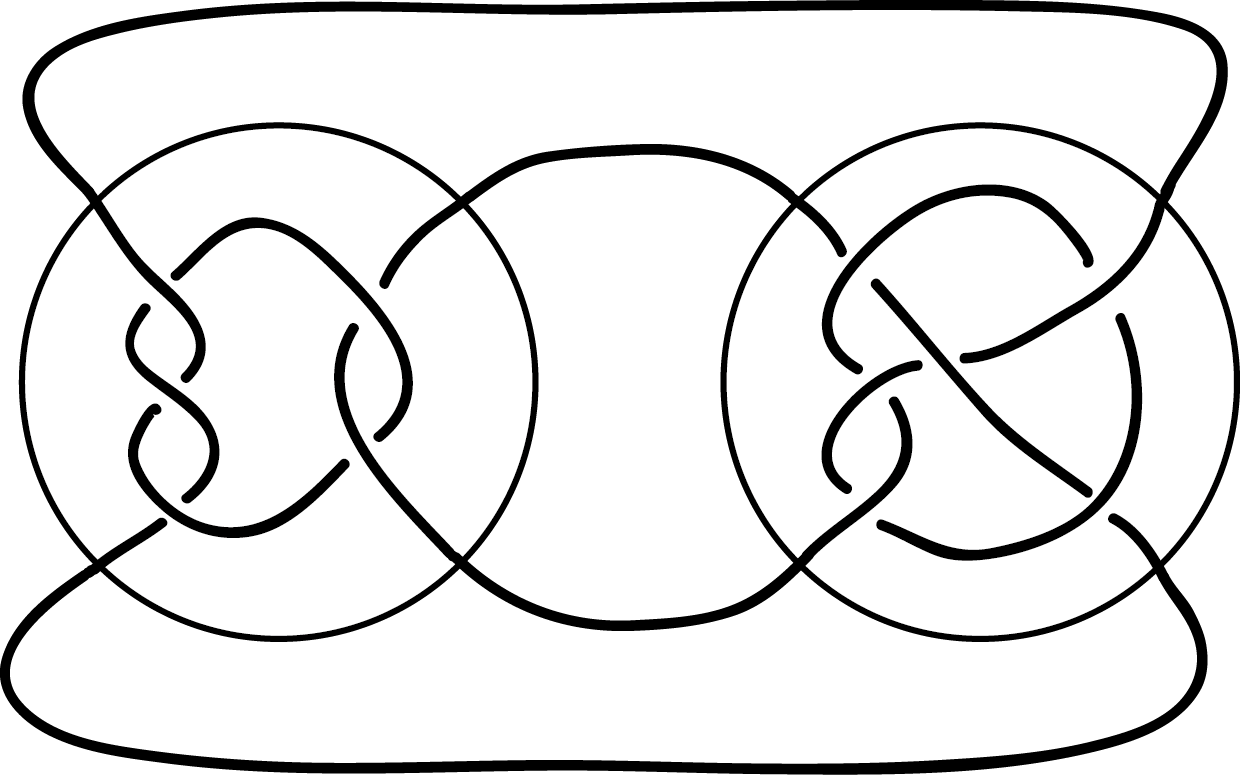}
\hspace{10mm}
		\includegraphics[height=2.5cm]{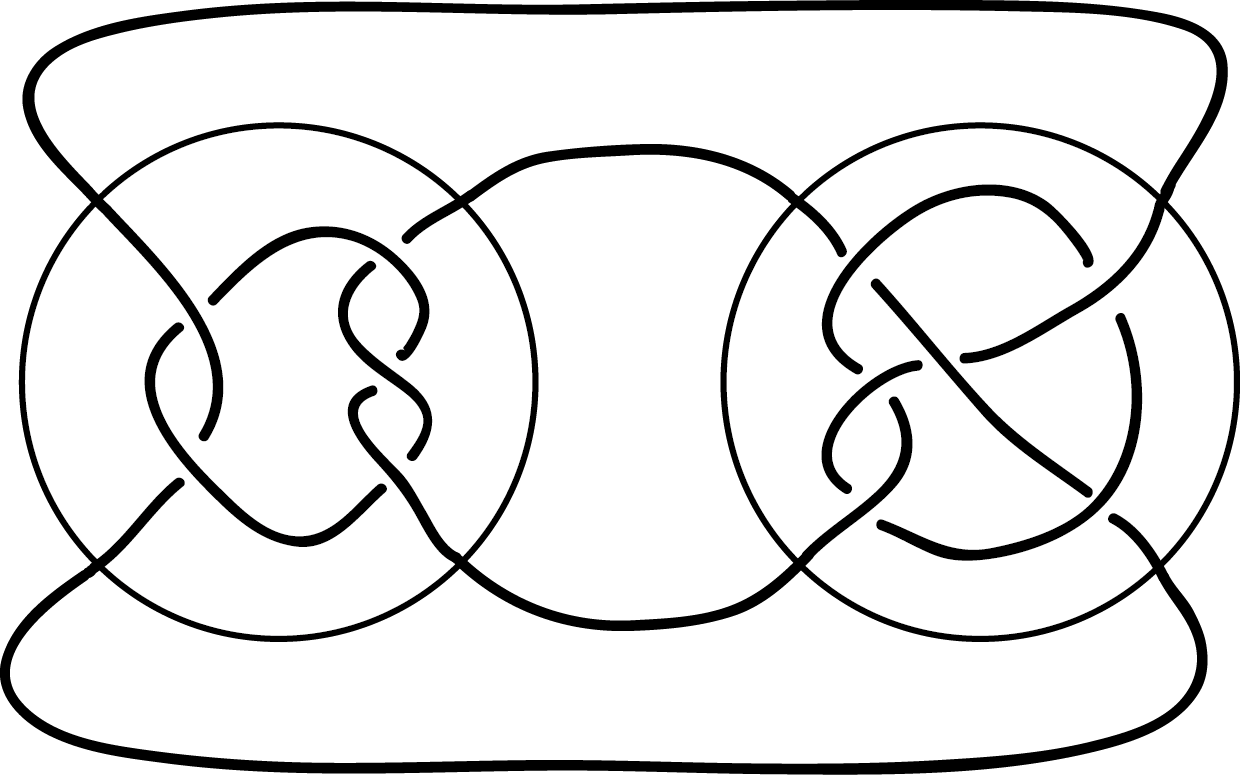}
  \caption{The Kinoshita-Teresaka knot and the Conway knot}\label{fig3}
\end{figure}

Up to 10 crossings, there are no inequivalent mutant knots~\cite{4}. The Kinoshita-Teresaka knot $(11n42)$ and the Conway knot  $(11n34)$ are a well-known example of inequivalent mutant pair.
As mutant knots share polynomial invariants such as Alexander polynomial, Jones polynomial, and Kauffman polynomial, they are  difficult to be distinguished. Stoimenow identified the mutant groups of prime knots up to 15 crossings and proved that crossing number is invariant under mutation for prime knots  up to 15 crossings~\cite{4}. But it is not known whether diagrammatic invariants such as the crossing number, the braid index, and the arc index are unchanged by mutation.

\section{Lower and Upper Bounds}
The Kauffman polynomial $F_L(a,z)$ of an oriented link $L$ is defined by 
$$F_L(a,z) = a^{- w(D)} \Lambda_{D}(a,z)$$
where $D$ is a diagram of $L$, $w(D)$ is the writhe of $D$ and $\Lambda_{D}(a,z)$ is the polynomial in $a$ and $z$ determined by the following properties:

\begin{enumerate}
\item $\Lambda_{O} (a,z) = 1$ where $O$ is the trivial knot diagram.
\item For any four diagrams $D_{+}, D_{-}, D_{\|}, D_{=}$ which  differ inside a disc as shown below but are identical outside,
we have the relation 
$$\Lambda_{D_+} (a,z) + \Lambda_{D_-} (a,z)=  z (\Lambda_{D_{\|}} (a,z) + \Lambda_{D_=} (a,z))$$
\item For any three diagrams $D_{\oplus}, D_{\ominus}, D$ which  differ inside a disc as shown below but are identical outside,
 we have the relation 
 $$a\Lambda_{D_\oplus} (a,z) =  \Lambda_{D} (a,z) = a^{-1}\Lambda_{D_\ominus} (a,z)$$
\end{enumerate}
\begin{figure}[th]
  \centering
		\includegraphics[height=1.3cm]{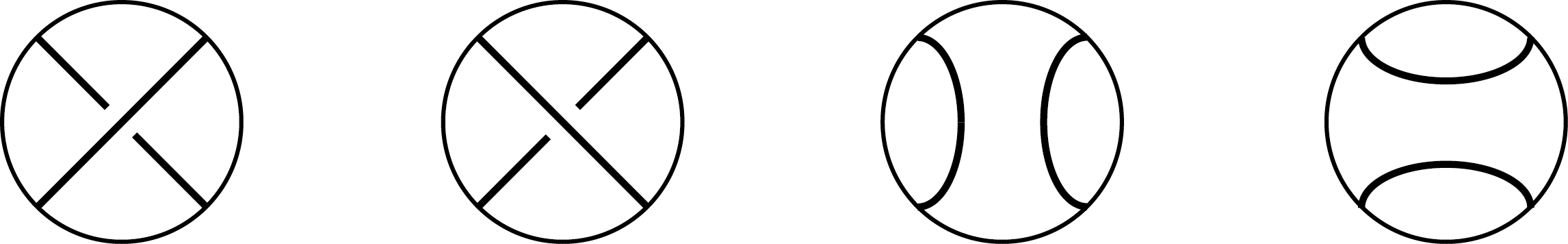}
		
$D_{+} \hspace{18mm} D_{-} \hspace{18mm} D_{\|} \hspace{18mm} D_{=}$

\medskip
		\includegraphics[height=1.3cm]{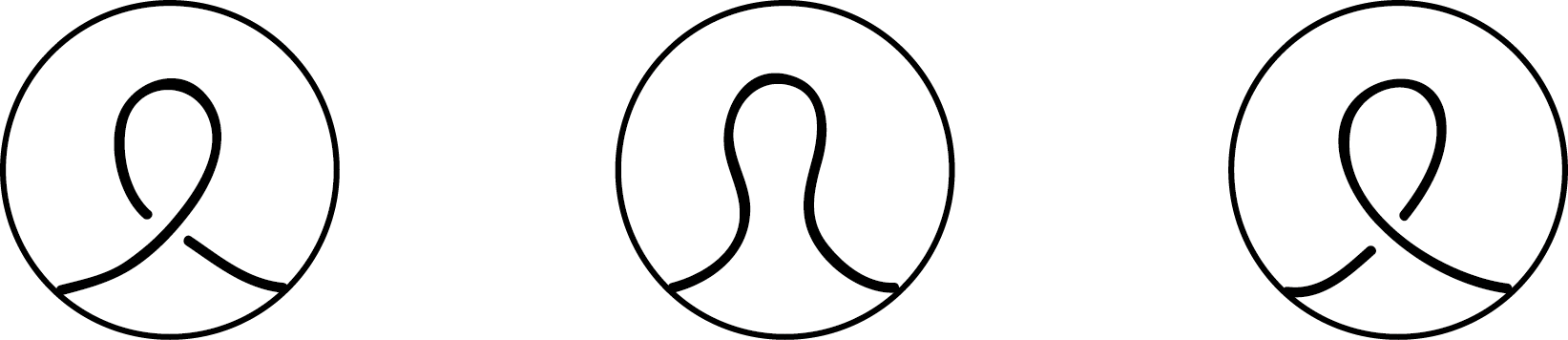}

$D_{\oplus} \hspace{19mm}  D \hspace{19mm} D_{\ominus}$
\end{figure}

For a link $L$, let $c(L)$ and $\breadth_a(F_{L})$ denote the minimal crossing number of $L$ and the Laurent degree in the variable $a$ of the Kauffman polynomial of $L$ (i.e. the difference between the hightest degree and the lowest degree of the variable $a$ in $F_L(a,z)$), respectively. We list some of the known results about the arc index and the minimal crossing number.

\begin{proposition}[\cite{2a,5a}]
If $L$ is an alternating link, $c(L) \le \breadth_a(F_L(a,z))$.
\end{proposition}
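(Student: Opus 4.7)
The plan is to prove the sharper statement that for every reduced alternating diagram $D$ of $L$ with $n$ crossings, $\operatorname{breadth}_a(F_L(a,z)) = n$. Granting this, the proposition is immediate: since $D$ realizes $L$, we have $c(L) \le n = \operatorname{breadth}_a(F_L)$. Any alternating $L$ admits such a diagram (by the hypothesis and a standard reduction removing nugatory crossings), so the reduction is harmless.

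Since $F_L(a,z) = a^{-w(D)}\Lambda_D(a,z)$ and multiplication by a power of $a$ does not affect the breadth in $a$, it suffices to compute $\operatorname{breadth}_a(\Lambda_D)$. I would expand $\Lambda_D$ as a state sum by iterating the four-term skein relation at every crossing, using the kink rules $a\Lambda_{D_\oplus} = \Lambda_D = a^{-1}\Lambda_{D_\ominus}$ to reduce the trivial-link terms. Each state $s$, which records a choice of smoothing $\|$ or $=$ at every crossing, contributes a Laurent monomial in $a$ whose exponent is determined by the Reidemeister~I kinks needed to trivialize the smoothed diagram, and whose $z$-power is recorded by the choices made along the way. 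One then identifies the two extreme states — the all-$\|$ smoothing and the all-$=$ smoothing — and checks that their $a$-exponents differ by exactly $2n$ in $\Lambda_D$, giving a candidate breadth of $n$ in $F_L$ after the writhe normalization.

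The main obstacle is non-cancellation: one must show that the coefficients of the top and bottom $a$-monomials in the full state sum are actually nonzero, so that the candidate breadth is attained. This is where the alternating and reduced hypotheses do all the work. Following the approach of Thistlethwaite and Kauffman, I would recast the state sum as a spanning-subgraph sum on the Tait checkerboard graph $G(D)$: the two extremal states correspond to the empty and full spanning subgraphs, and the alternating condition ensures that all contributions to a given extreme $a$-exponent carry the same sign, while reducedness guarantees that neither $G(D)$ nor its planar dual has a loop, so no neighboring state can share the extreme exponent. The extremal coefficients therefore survive the summation, which yields $\operatorname{breadth}_a(F_L) = n$ and proves the proposition. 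This non-cancellation step is the genuine technical core of the argument; the state-sum setup and degree bookkeeping are routine by comparison.
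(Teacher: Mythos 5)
The paper itself offers no proof of this proposition---it is quoted from Cromwell~\cite{2a} and Thistlethwaite~\cite{5a}---and your overall strategy (pass to a reduced alternating diagram $D$ with $n$ crossings, show $\breadth_a(F_L)=n$ by a state-sum analysis on the checkerboard graph, and establish non-cancellation of the extreme coefficients) is exactly the strategy of the cited source. However, two concrete steps in your sketch are wrong as written. First, the breadth in $a$ is invariant under multiplication by the single monomial $a^{-w(D)}$, so the writhe normalization cannot convert a span of $2n$ in $\Lambda_D$ into a span of $n$ in $F_L$; if your two extremal states genuinely contributed $a$-exponents differing by $2n$, you would have proved $\breadth_a(F_L)=2n$, which contradicts the inequality $\breadth_a(F_L)\le\alpha(L)-2\le c(L)$ obtained for alternating links by combining the Morton--Beltrami bound $\breadth_a(F_L)+2\le\alpha(L)$ with the Bae--Park bound $\alpha(L)\le c(L)+2$. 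You appear to be importing the Kauffman-bracket proof of the Tait span conjecture, where it is the substitution $t=A^{-4}$---not the writhe factor---that compresses the span; no such rescaling is available for $F(a,z)$, and the correct claim is that the extreme $a$-exponents of $\Lambda_D$ itself differ by exactly $n$.

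Second, $\Lambda_D$ does not admit a state sum over independent choices of $\|$ or $=$ at each crossing. The relation $\Lambda_{D_+}+\Lambda_{D_-}=z(\Lambda_{D_\|}+\Lambda_{D_=})$ is a four-term relation: resolving a crossing of $D$ produces not only the two smoothings but also the crossing-switched diagram $D_-$, which has the same number of crossings, so the recursion does not terminate by induction on crossing number alone. One must first set up an induction on a pair such as (crossing number, distance to a descending diagram), or use Thistlethwaite's more elaborate notion of state, before any ``sum over states'' exists; the extreme-degree bookkeeping must then account for the switch terms as well. These two points---the correct degree bound $\breadth_a(\Lambda_D)\le n$ for a connected $n$-crossing diagram and the construction of a workable state sum---are, together with the non-cancellation step you do flag, precisely the technical content of~\cite{5a}. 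The honest fix is to either carry out that argument in full or cite it, as the paper does.
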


\begin{proposition}[\cite{6}]\label{prop:breadth+2}
For any link L, $\breadth_a(F_L(a,z))+2 \le \alpha(L)$.
\end{proposition}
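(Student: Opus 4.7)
The plan is to translate an arc presentation realizing $\alpha(L)$ into a planar diagram and then bound the $a$-breadth of its Kauffman polynomial in terms of the number of arcs.

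First, I would fix an arc presentation $(D,C)$ with $n=\alpha(L)$ arcs. Removing a point $P\in C\setminus D$ and identifying $C\setminus P$ with a vertical axis turns each page $d_i$ into a half-plane attached along this axis, and the arc $\alpha_i\subset d_i$ then projects to a single bridge in the plane. This produces a planar diagram $D^\ast$ of $L$ with a natural decomposition into $n$ elementary tangle pieces, one for each binding point on $C$.

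Next, I would apply the defining relations of the Kauffman polynomial to $D^\ast$. Using relation~(2) to resolve crossings and relation~(3) to absorb single curls, one shows, following Morton and Beltrami's argument, that each of the $n$ arcs contributes at most one unit to the span of $a$-degrees appearing in the state sum for $\Lambda_{D^\ast}(a,z)$. After multiplying by the writhe normalization $a^{-w(D^\ast)}$ and observing that the two arc endpoints adjacent to the removed point $P$ contribute no new $a$-degree, one obtains $\breadth_a(F_L(a,z))\le n-2$, as desired.

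The main obstacle will be the combinatorial bookkeeping in the second step. Every application of relation~(2) can create closed loops that are then eliminated by relation~(3), at the cost of multiplicative factors of $a^{\pm 1}$, and these local shifts have to be weighed against the writhe normalization so that the total $a$-breadth of the final polynomial stays within $n-2$. The cleanest implementation is to put $D^\ast$ into rectangular (grid) form and run an induction on the number of rows, which localizes each skein contribution to a single row and makes the ``$-2$'' correction, coming from the two arcs adjacent to $P$, transparent.
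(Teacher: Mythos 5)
The paper offers no proof of this proposition: it is quoted from Morton and Beltrami \cite{6}, so there is no internal argument to measure your attempt against, and your proposal must stand on its own. As written, it does not. The entire content of the inequality is concentrated in two asserted sentences --- that ``each of the $n$ arcs contributes at most one unit to the span of $a$-degrees appearing in the state sum for $\Lambda_{D^\ast}$'' and that ``the two arc endpoints adjacent to the removed point $P$ contribute no new $a$-degree'' --- and neither is proved nor self-evident. The Kauffman polynomial is defined by skein relations, not by a state sum, so any degree bound has to be established by induction over a resolving tree: one orders the diagrams obtainable from $D^\ast$ inside the region determined by the open book (by crossing number and by distance from being descending), proves separate upper and lower bounds on $\deg_a\Lambda_D$ that are preserved by relations~(2) and~(3), and checks them on the crossingless or descending base cases. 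That inductive invariant is exactly what Morton and Beltrami supply and exactly what your outline defers to ``combinatorial bookkeeping''; identifying it is the theorem, not an implementation detail.

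There is also a concrete confusion in your account of where the $-2$ comes from. The point $P$ is removed from $C$ \emph{away from} $L$, so it is not an endpoint of any arc; the endpoints of the arcs are the $n$ binding points, each shared by exactly two arcs, and the two binding points nearest $P$ belong to arcs that are in no way degenerate in $D^\ast$. So ``the two arcs adjacent to $P$ contribute nothing'' is not a meaningful statement about the diagram, and the correction term cannot be obtained this way; it has to fall out of the inductive degree estimates themselves. Finally, note that a crude per-arc bound of one unit on \emph{each} of the maximal and minimal $a$-degrees would only give $\breadth_a(F_L)\le 2n-2$, which is far too weak --- another sign that the single-sentence degree count is doing more work than it can support.
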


\begin{proposition}[\cite{7}]
Let $L$ be any prime link. Then $\alpha (L) \le c(L) +2$. Moreover this inequality is strict if and only if $L$ is not alternating.
\end{proposition}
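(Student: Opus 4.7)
The plan is to prove the inequality $\alpha(L)\le c(L)+2$ by an explicit construction of an arc presentation from a minimal crossing diagram, and then to chain this with the previous two propositions to pin down when equality holds.

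For the upper bound, I would start with a reduced minimal crossing diagram $D$ of $L$ drawn on $S^{2}$. The idea is to open-book $D$ using its \emph{overstrands}, the maximal subarcs of $D$ each of whose interior contains no undercrossing. Each undercrossing breaks $D$ into one more overstrand, so $D$ has at most $c(L)$ overstrands. Placing each overstrand on its own half-disk bounded by the binding circle, together with two additional arcs running along the binding axis to connect the endpoints, yields an arc presentation of $L$ with $c(L)+2$ arcs. The key point to verify is that the cyclic order of the half-disks around the binding circle can be chosen consistently so that the over/under relation at every crossing of $D$ is realized; this is exactly condition~(2) in the definition of arc presentation, and it is where planarity of $D$ enters.

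For the equality characterization, the alternating direction is immediate: if $L$ is alternating then Propositions~2.1 and~2.2 give
\begin{equation*}
c(L)+2 \le \breadth_{a}(F_{L}(a,z))+2 \le \alpha(L),
\end{equation*}
which together with the upper bound forces $\alpha(L)=c(L)+2$. Hence the strict inequality can only occur when $L$ is nonalternating.

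Conversely, suppose $L$ is prime and nonalternating, and take a minimal crossing diagram $D$. Since $D$ cannot itself be alternating, somewhere along $D$ two consecutive crossings share the same over/under type, so $D$ contains two adjacent overstrands meeting at an overcrossing (or dually at an undercrossing). I would show that such a configuration lets one merge those two overstrands into a single arc in the construction of the previous paragraph, saving one half-disk and producing an arc presentation with only $c(L)+1$ arcs, whence $\alpha(L)\le c(L)+1<c(L)+2$. The main obstacle is carrying out this merging rigorously: one must check that the resulting arc collection still admits a globally consistent half-disk order, and that primeness of $L$ rules out nugatory or disconnecting configurations that would obstruct the saving.
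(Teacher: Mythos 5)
This proposition is not proved in the paper at all: it is quoted from Bae and Park's article \emph{An upper bound of arc index of links}, so your attempt has to be measured against that source rather than against anything in the text. Your treatment of the ``alternating implies equality'' half is correct and is exactly the standard derivation: for alternating $L$ one chains $c(L)\le\breadth_a(F_L(a,z))$ with $\breadth_a(F_L(a,z))+2\le\alpha(L)$ and plays this off against the upper bound, so strictness can only occur for nonalternating links. The two constructive halves, however, contain genuine gaps.

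For the upper bound, the overstrands of a minimal diagram $D$ cannot literally serve as the arcs of an arc presentation. Under the definition used in this paper, each arc must have both endpoints on the binding circle $C$ and must lie in its own disk $d_i$ meeting the other disks only along $C$; the endpoints of an overstrand, by contrast, are undercrossing points sitting in the interior of the diagram, and all crossings of the resulting diagram must occur between arcs on distinct pages. So one has to route $C$ through $D$, cut and reattach the overstrands near their ends, and only then count arcs; the origin of the ``$+2$'' and the existence of a globally consistent linear order of pages realizing every over/under relation are precisely the content of Bae and Park's construction, and your sketch explicitly defers this verification rather than supplying it. For the strict inequality when $L$ is prime and nonalternating, your witness is misstated: two adjacent overstrands always meet at an \emph{undercrossing} (that is what terminates an overstrand), so ``two adjacent overstrands meeting at an overcrossing'' does not occur. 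The correct witness of nonalternation is an overstrand containing no overcrossing in its interior (equivalently, some overstrand containing at least two), and converting that into a saving of one arc --- while preserving the consistent page order and using primeness to rule out nugatory or disconnecting configurations --- is the substantive part of Bae and Park's theorem, which your proposal asserts but does not carry out.
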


\begin{proposition}[\cite{jp}]
A prime link $L$ is nonalternating if and only if \break
$\alpha (L) \le c(L)$.
\end{proposition}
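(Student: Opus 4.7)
The plan is to prove the two implications separately, leaning on the preceding proposition for the easy half and building an efficient arc presentation for the hard half.

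For the direction ``$\alpha(L)\le c(L)$ implies $L$ is nonalternating,'' I would contrapose. If $L$ is a prime alternating link, the preceding proposition gives $\alpha(L)=c(L)+2$, so in particular $\alpha(L)>c(L)$, and the contrapositive yields the implication.

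For the converse, ``$L$ prime and nonalternating implies $\alpha(L)\le c(L)$,'' the preceding proposition already delivers $\alpha(L)\le c(L)+1$, so one extra arc must be squeezed out of some diagrammatic construction. I would begin with a reduced minimal-crossing diagram $D$ of $L$ and invoke the Bae--Park style construction from the proof of $\alpha(L)\le c(L)+2$ in \cite{7}, which walks along an appropriately chosen strand of $D$ and produces one vertical half-plane per over/under transition, plus two ``boundary'' arcs, for a total of $c(L)+2$. At a \emph{nonalternating edge} of $D$, namely an edge whose two bounding crossings are of the same type relative to the walked strand, the two arcs associated to those crossings can be amalgamated into a single arc without violating the ordering condition (2), saving one arc. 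Since $L$ is nonalternating, at least one such edge exists, so the heart of the argument is producing a second, compatible saving. Here I would appeal to primeness: a prime reduced nonalternating diagram cannot have its entire nonalternating character concentrated at a single edge, since a lone such edge is removable by a flype and would contradict the minimality of $c(L)$. A second nonalternating edge is therefore available, and after a judicious choice of walked strand its merging can be performed in parallel with the first, cutting the total arc count to $(c(L)+2)-2=c(L)$.

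The main obstacle, and the part on which I would spend the most care, is verifying that the two mergings are jointly compatible with the arc presentation axioms (1)--(4). Specifically, once the two arcs at one nonalternating edge are merged and similarly at the other, one must rule out that the two newly merged arcs share a crossing on which they inherit conflicting over/under orderings, or that their associated disks $d_i,d_j$ meet somewhere other than the binding circle $C$. This demands a case analysis according to whether the two nonalternating edges lie on the same strand of the overpass decomposition of $D$ or on different strands, plus a local verification of the ordering in each case; the primeness hypothesis is what prevents the degenerate configurations in which the two savings would collide.
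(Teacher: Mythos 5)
First, a framing point: the paper does not prove this proposition at all --- it is imported as a black box from \cite{jp} --- so your proposal can only be measured against the argument in that reference. Your first implication is fine and is the standard one: for a prime alternating link the preceding propositions force $\alpha(L)=c(L)+2$ (the Bae--Park upper bound together with the Kauffman-polynomial lower bound $c(L)+2\le\breadth_a(F_L)+2\le\alpha(L)$), so $\alpha(L)>c(L)$, and the contrapositive gives ``$\alpha(L)\le c(L)$ implies nonalternating.''

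The converse is where all the content lies, and your sketch does not close it. Three concrete problems. (i) Your description of the Bae--Park construction is not what \cite{7} does: their $c(L)+2$ presentation is obtained by drawing the binding circle onto the diagram so that it meets $D$ in $c(L)+2$ points, with the count governed by the region structure of the diagram (a connected $c$-crossing diagram has $c+2$ regions), not by a walk along a strand spending one half-plane per over/under transition; consequently ``the two arcs associated to the two crossings bounding an edge'' is not a defined object in that construction, and the proposed merging has no precise meaning yet. (ii) The existence of a second nonalternating edge needs neither primeness nor flypes, and your justification is wrong as stated: a flype preserves the crossing number, so eliminating a nonalternating edge by a flype would contradict nothing. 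The correct reason is a parity count: each crossing carries two over-ends and two under-ends, so the numbers of over--over edges and under--under edges are equal, hence the total number of nonalternating edges is even and therefore at least two in any nonalternating diagram. (iii) Most seriously, the joint compatibility of the two savings --- which you correctly identify as the crux --- is deferred to an unspecified case analysis. That case analysis \emph{is} the theorem of \cite{jp}; it occupies the bulk of that paper and uses primeness and reducedness in essential ways. As written, your proposal is a plausible strategy outline for the hard direction, not a proof of it.
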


With the results above, it can be seen that for non-split alternating link $L$, the arc index is equal to the crossing number plus two. Since mutation does not change the alternating property, for non-split alternating knots, arc index is invariant under mutation.

In 1997, Beltrami and Cromwell constructed minimal arc presentations of some nonalternating links which are unaffected by mutations~\cite{8}.
A 2-string tangle is {\em strongly alternating} if both the numerator closure and the denominator closure are irreducible alternating diagrams. The nonalternating sum of two strongly alternating tangles is called a semi-alternating diagram. As usual, a knot is {\em semi-alternating} if it has a  semi-alternating diagram. For example, The Kinoshita-Teresaka knot and the Conway knot in  Figure~\ref{fig3} are semi-alternating.  The main idea  in their construction is the existence of `Hamiltonian paths'. They showed that such a construction gives a minimal arc presentation if the tangles are strongly alternating. However, it is not yet known for non semi-alternating knots.

\section{Main Results}
A \emph{Montesinos link} $M \big( e; \frac{{\beta_1}}{{\alpha_1}}, \cdots , \frac{{\beta_n}}{{\alpha_n}} \big)$ has a projection as shown in  Figure~\ref{fig4} in which $e$ and $R_i$ denote the number of half twists and  a rational 2-string tangle with slope $\frac{{\beta_i}}{{\alpha_i}}$, respectively, for $i=1,\ldots,n$. If $e=0$, 
we use $M \big( \frac{{\beta_1}}{{\alpha_1}}, \cdots , \frac{{\beta_n}}{{\alpha_n}} \big)$, omitting~$e$.

\begin{figure}[!h]
  \centering
		\includegraphics[height=2.5cm]{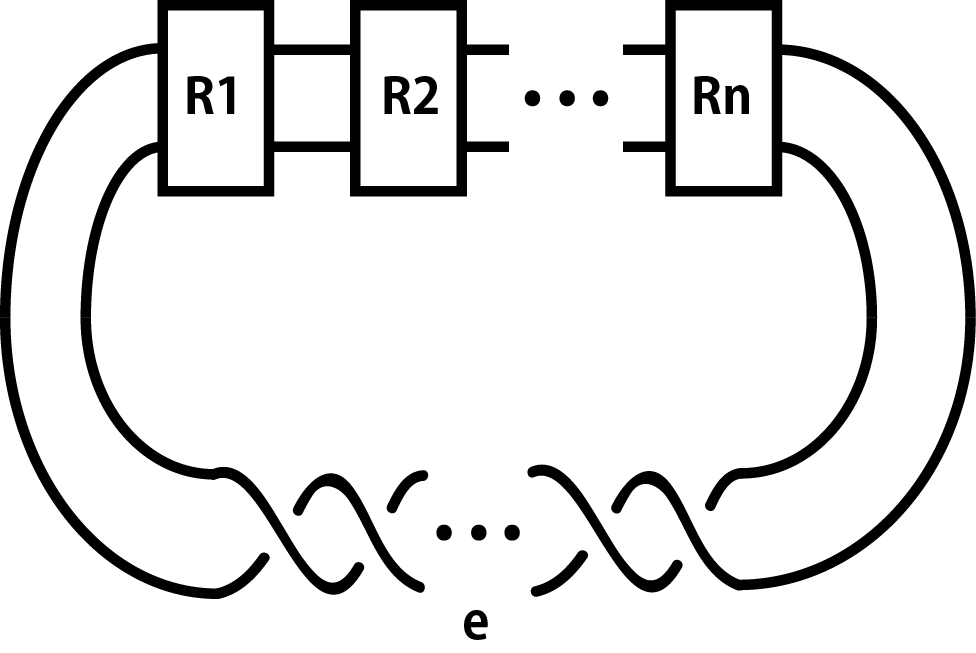}
  \caption{Montesinos link diagram}\label{fig4}
\end{figure}

We only work on Montesinos knots with four tangles, i.e., $n=4$, with $e=0$.
We focus on non semi-alternating mutant knots which have 11 or 12 crossings. We found a place to attach full twist and proved that there are infinitely many non semi-alternating mutant knot pairs and triples which have same arc index. For these mutant knots, the arc index is not changed by mutation. 

In all figures below, each square indicates a 2-string tangle of right handed twists with crossings as many as the number in it.

\begin{theorem}\label{thm3_1}
The  mutant pair $M(\frac{2}{3}, -\frac{2}{3}, \frac{2}{3}, \frac{1}{2n+2})$ and $M(-\frac{2}{3}, \frac{2}{3}, \frac{2}{3}, \frac{1}{2n+2})$ are distinct knots with  the minimal crossing number and the arc index  equal to $2n+11$, for $n\ge0$.
\end{theorem}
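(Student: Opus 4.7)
My plan is to deduce the three assertions---distinctness, $c(L)=2n+11$, and $\alpha(L)=2n+11$---from an explicit arc construction and a Kauffman-polynomial computation, with distinctness handled separately via the Bonahon--Siebenmann classification. The standard Montesinos diagram has $3+3+3+(2n+2)=2n+11$ crossings, so $c(L)\le 2n+11$ is immediate. Both knots are nonalternating, since no integer shifts of the slopes produce a representative with $e=0$ and all slopes of one sign. Therefore, once I exhibit an arc presentation on $2n+11$ arcs, the inequality $\alpha(L)\le c(L)$ for prime nonalternating links (the last proposition of Section~2) together with $\alpha(L)\ge 2n+11$ from Proposition~\ref{prop:breadth+2} pins both $\alpha(L)$ and $c(L)$ to $2n+11$.

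The Kauffman step is to show $\breadth_a F_L(a,z)=2n+9$. I would attack this by a skein recursion inside the $\tfrac{1}{2n+2}$ twist region: one application of the Kauffman relation there reduces the parameter $n$ to $n-1$ plus strictly simpler diagrams, so induction reduces the claim to a finite verification at $n=0$. The expected behavior is that $\breadth_a F_L$ grows by exactly $2$ per unit of $n$, starting from $9$ at $n=0$. Since the Kauffman polynomial is mutation-invariant, this single computation supplies $\alpha(L)\ge 2n+11$ for both knots in the pair simultaneously.

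The main obstacle is constructing an arc presentation with exactly $2n+11$ arcs for each knot, because they are non semi-alternating and the Beltrami--Cromwell Hamiltonian-path method of~\cite{8} does not apply off the shelf. Following the ``attach full twist'' idea mentioned in Section~3, I would isolate the $\tfrac{1}{2n+2}$ block as a linearly-stacked chain contributing exactly $2n$ arcs that slot neatly into successive pages of the open book, reducing the task to an $11$-arc presentation of the ``core'' diagram obtained by collapsing the long twist to a single crossing. On the core I would build a Hamiltonian path on an appropriate checkerboard sub-graph and verify axioms (1)--(4), with the twist attachment site chosen so that the same construction works after the mutation swaps the first two tangles. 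This simultaneous compatibility---a single placement that yields valid arc presentations for both mutant cores---is the delicate point, and I anticipate it requiring case analysis around the boundary of the twist block.

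For distinctness I would invoke the Bonahon--Siebenmann classification of Montesinos knots with at least three rational tangles. After normalizing $-\tfrac{2}{3}$ to its residue $\tfrac{1}{3}$ and absorbing the integer shift into $e$, the two knots become $M(-1;\tfrac{2}{3},\tfrac{1}{3},\tfrac{2}{3},\tfrac{1}{2n+2})$ and $M(-1;\tfrac{1}{3},\tfrac{2}{3},\tfrac{2}{3},\tfrac{1}{2n+2})$, with the same framing and the same multiset of residues. No cyclic rotation or reversal can carry one sequence to the other because the two $\tfrac{2}{3}$ residues are non-adjacent in the first sequence but adjacent in the second. Hence the two Montesinos knots are genuinely distinct despite agreeing on every mutation-invariant polynomial invariant.
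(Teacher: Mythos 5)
Your proposal follows essentially the same route as the paper: distinctness via the classification of Montesinos links by the cyclic/reversal class of the fractions mod $1$, the upper bound $\alpha(L)\le 2n+11$ from an explicit arc presentation built by threading the binding circle through the $\frac{1}{2n+2}$ twist box, the lower bound from $\breadth_a(F_L)=2n+9$ established by a skein induction in that twist box, and the crossing number pinned between the diagram and $\alpha(L)\le c(L)$ for prime nonalternating knots. The only place the paper has to work harder than your sketch suggests is the induction itself: a single application of the Kauffman relation introduces an odd-twist diagram, so the recurrence must be fully unrolled and the extreme $a$-degrees of all resulting terms compared to rule out cancellation before the breadth can be read off.
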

\begin{figure}[!ht]
  \centering
		\includegraphics[height=2.8cm]{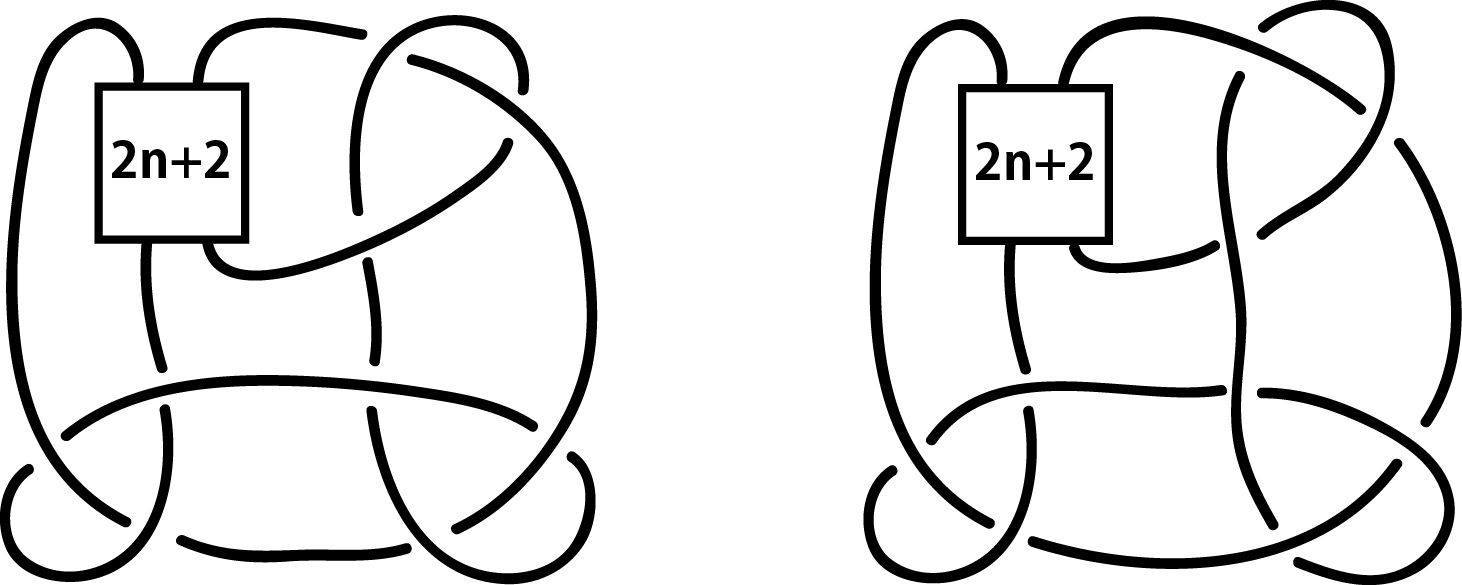}
  \caption{Mutant pair $M(\frac{2}{3}, -\frac{2}{3}, \frac{2}{3}, \frac1{2n+2})$ and $M(-\frac{2}{3}, \frac{2}{3}, \frac{2}{3}, \frac1{2n+2})$}
\end{figure}

\begin{theorem}\label{thm3_2}
The  mutant pair  $M(\frac{1}{2}, \frac{2}{3}, -\frac{2}{3}, \frac1{2n+3})$ and $M(\frac{1}{2}, -\frac{2}{3}, \frac{2}{3}, \frac1{2n+3})$ are distinct knots with  the minimal crossing number and the arc index  equal to  $2n+11$, for $n\ge0$.
\end{theorem}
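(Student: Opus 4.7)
The plan closely parallels the proof of Theorem~\ref{thm3_1}. For the mutation step, the sub-tangle of the standard Montesinos diagram consisting of the two middle rational tangles $\tfrac{2}{3}$ and $-\tfrac{2}{3}$ together with the arcs joining them is a $2$-string tangle, and its $180^\circ$ rotation about the appropriate Conway axis swaps these two tangles while fixing the outer pair $\tfrac{1}{2}$ and $\tfrac{1}{2n+3}$; this produces the second knot and hence exhibits the pair as Conway mutants. For distinctness, I would invoke Bonahon's classification of Montesinos links with four tangles of denominators at least~$2$: two such knots coincide iff they share the Euler number $e + \sum \beta_i/\alpha_i$ and the cyclic sequences $(\beta_i/\alpha_i \bmod 1)$ agree up to cyclic rotation and reversal. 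Both knots here have Euler number $\tfrac{1}{2} + \tfrac{1}{2n+3}$, but the cyclic sequences $\bigl(\tfrac{1}{2}, \tfrac{2}{3}, \tfrac{1}{3}, \tfrac{1}{2n+3}\bigr)$ and $\bigl(\tfrac{1}{2}, \tfrac{1}{3}, \tfrac{2}{3}, \tfrac{1}{2n+3}\bigr)$ are easily seen to be inequivalent, since no cyclic shift or reversal of the first places $\tfrac{1}{2}$ and $\tfrac{1}{3}$ in consecutive positions.

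The upper bound $c(L) \le 2n+11$ follows from the standard Montesinos diagram, which has $2 + 3 + 3 + (2n+3) = 2n+11$ crossings. For the upper bound $\alpha(L) \le 2n+11$, I would exhibit an explicit arc presentation on $2n+11$ arcs by adapting the Beltrami--Cromwell Hamiltonian-path construction to this Montesinos diagram, taking care to attach the long $(2n+3)$-twist at a location that does not inflate the arc count. The matching lower bounds both come from the Kauffman polynomial: it suffices to verify $\breadth_a F_L(a,z) = 2n+9$, whereupon Proposition~\ref{prop:breadth+2} gives $\alpha(L) \ge 2n+11$, and the last proposition of Section~2 gives $c(L) \ge \alpha(L) = 2n+11$ since the knots are non-alternating. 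Because mutation preserves $F_L$, the breadth calculation only needs to be done for one member of the pair.

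The main obstacle is the Kauffman-polynomial calculation. I would set up a skein recursion on one pair of crossings inside the $(2n+3)$-twist tangle, expressing $F_L$ for parameter $n$ in terms of $F_L$ for parameter $n-1$ together with the Kauffman polynomials of two auxiliary closed-up sub-diagrams, yielding a linear recursion whose coefficients can be pinned down from the base case $n = 0$. The delicate point is to verify that no cancellation occurs at the highest and lowest $a$-degrees at any stage of the recursion, so that the $a$-breadth grows by exactly $2$ per twist pair and attains $2n+9$ for every $n \ge 0$; once this stability of the extremal coefficients is established, the desired equalities for $c(L)$ and $\alpha(L)$ follow directly from the bounds of Section~2.
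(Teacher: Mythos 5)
Your outline follows the paper's proof essentially step for step: distinctness via the classification of Montesinos links by the ordered fractions mod $1$ up to cyclic permutation and reversal together with $e_0$, the upper bounds from the $(2n+11)$-crossing Montesinos diagram and an explicit arc presentation with $2n+11$ arcs, and the matching lower bounds from $\breadth_a(F_L)=2n+9$ via Proposition~\ref{prop:breadth+2} and a skein recursion on the uppermost crossing of the twist box that tracks the extremal $a$-degree terms and checks they never cancel (the paper carries this computation out in detail only for Theorem~\ref{thm3_1} and records the polynomials needed for Theorem~\ref{thm3_2}, namely $\Lambda_{2n}=[z^2a^{-(2n+5)},6za^4]$, in its appendix). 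One small repair is needed in your distinctness argument: at $n=0$ the fraction $\frac{1}{2n+3}=\frac13$ coincides with $-\frac23 \bmod 1$, so your criterion that $\frac12$ and $\frac13$ are never consecutive in the first sequence fails there; the sequences $\bigl(\frac12,\frac23,\frac13,\frac13\bigr)$ and $\bigl(\frac12,\frac13,\frac23,\frac13\bigr)$ are still inequivalent under cyclic permutation and reversal, but you should distinguish them differently in that case (for instance, the two copies of $\frac13$ are adjacent in the first sequence and not in the second).
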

\begin{figure}[!ht]
  \centering
		\includegraphics[height=2.8cm]{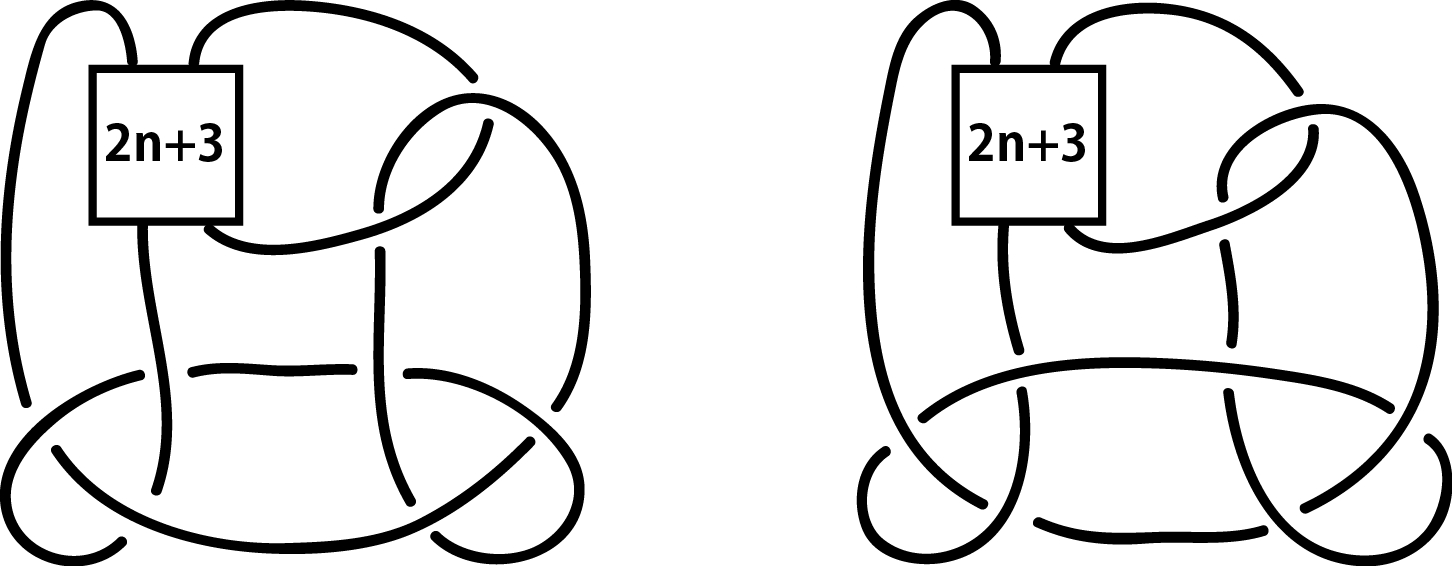}
  \caption{Mutant pair $M(\frac{1}{2}, \frac{2}{3}, -\frac{2}{3}, \frac1{2n+3})$ and $M(\frac{1}{2}, -\frac{2}{3}, \frac{2}{3}, \frac1{2n+3})$}
\end{figure}

\begin{theorem}\label{thm3_3}
The  mutant pair  $M(-\frac{2}{3}, \frac{2}{3}, \frac{2}{3}, \frac1{2n+3})$ and $M(\frac{2}{3}, -\frac{2}{3}, \frac{2}{3}, \frac1{2n+3})$ are distinct knots with  the minimal crossing number and the arc index  equal to $2n+12$, for $n\ge0$.
\end{theorem}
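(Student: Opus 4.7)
The plan is to establish three facts in sequence: distinctness of the mutant pair, an upper bound $\alpha(K)\leq 2n+12$ via an explicit arc presentation, and a matching lower bound $\alpha(K)\geq 2n+12$ via the Kauffman polynomial. The crossing number equality then follows for free: once $\alpha(K)=2n+12$ is known and both knots are verified to be prime and nonalternating, the proposition giving $\alpha(L)\leq c(L)$ for prime nonalternating $L$ forces $c(K)\geq 2n+12$, while the standard Montesinos projection shown in the figure exhibits exactly $2n+12$ crossings and supplies the reverse inequality.

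For distinctness I would appeal to Bonahon's classification of Montesinos links with at least three tangles, which labels them by the cyclic sequence $(\beta_i/\alpha_i\bmod 1)$ up to cyclic permutation and reversal, together with the rational number $e+\sum\beta_i/\alpha_i$. Reducing slopes modulo one, the knot $M(-\frac{2}{3},\frac{2}{3},\frac{2}{3},\frac{1}{2n+3})$ has cyclic sequence $(\frac{1}{3},\frac{2}{3},\frac{2}{3},\frac{1}{2n+3})$, in which the two copies of $\frac{2}{3}$ are cyclically adjacent, while $M(\frac{2}{3},-\frac{2}{3},\frac{2}{3},\frac{1}{2n+3})$ gives $(\frac{2}{3},\frac{1}{3},\frac{2}{3},\frac{1}{2n+3})$, in which they are cyclically separated by the other two fractions. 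Both sums equal $\frac{2}{3}+\frac{1}{2n+3}$, but no rotation or reversal carries one adjacency pattern to the other, so the two knots are inequivalent.

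For the upper bound I would construct an arc presentation following the Beltrami--Cromwell procedure: pick a Hamiltonian path on the overpasses of the standard Montesinos diagram, promote each overpass to a single arc above the binding circle, and count. The three $\pm\frac{2}{3}$ tangles each contribute a fixed number of arcs, while the $\frac{1}{2n+3}$ twist tangle contributes exactly $2n+3$ arcs (each additional full twist adds two crossings and two arcs). Summing these with the connecting arcs between tangles yields $2n+12$ arcs in total.

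For the lower bound I invoke Proposition~\ref{prop:breadth+2}, reducing the problem to showing $\breadth_a(F_K(a,z))\geq 2n+10$. I would compute the Kauffman polynomial by tangle calculus: assign each rational tangle a pair of Kauffman-polynomial coefficients in the basis of its numerator and denominator closures, then combine the four tangles through the Montesinos plumbing formula. The twist tangle $\frac{1}{2n+3}$ contributes a factor whose $a$-breadth grows by $2$ with every added full twist, while the three fixed tangles determine a constant offset. The main obstacle, as I anticipate it, is verifying that the leading and trailing $a$-coefficients of the assembled Kauffman polynomial do not cancel in the sum; I would address this by computing the base case $n=0$ explicitly and then tracking the top and bottom $a$-coefficients as nonzero polynomials in $z$ through the skein recursion supplied by each successive full twist.
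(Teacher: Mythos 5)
Your proposal follows essentially the same route as the paper: distinctness via the classification of Montesinos links by their cyclic sequence of fractions mod $1$, the upper bound from an explicit arc presentation of the standard diagram, the lower bound from Proposition~\ref{prop:breadth+2} together with an inductive skein computation of the extreme $a$-degrees of the Kauffman polynomial in the twist region (with the base case done explicitly), and the crossing number from primality and nonalternatingness. The only cosmetic difference is that the paper builds the arc presentation by drawing an explicit binding circle weaving through each twist box rather than by invoking the Beltrami--Cromwell Hamiltonian-path construction (whose minimality statement would not apply here anyway, since these knots are not semi-alternating).
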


\begin{figure}[!ht]
  \centering
		\includegraphics[height=2.8cm]{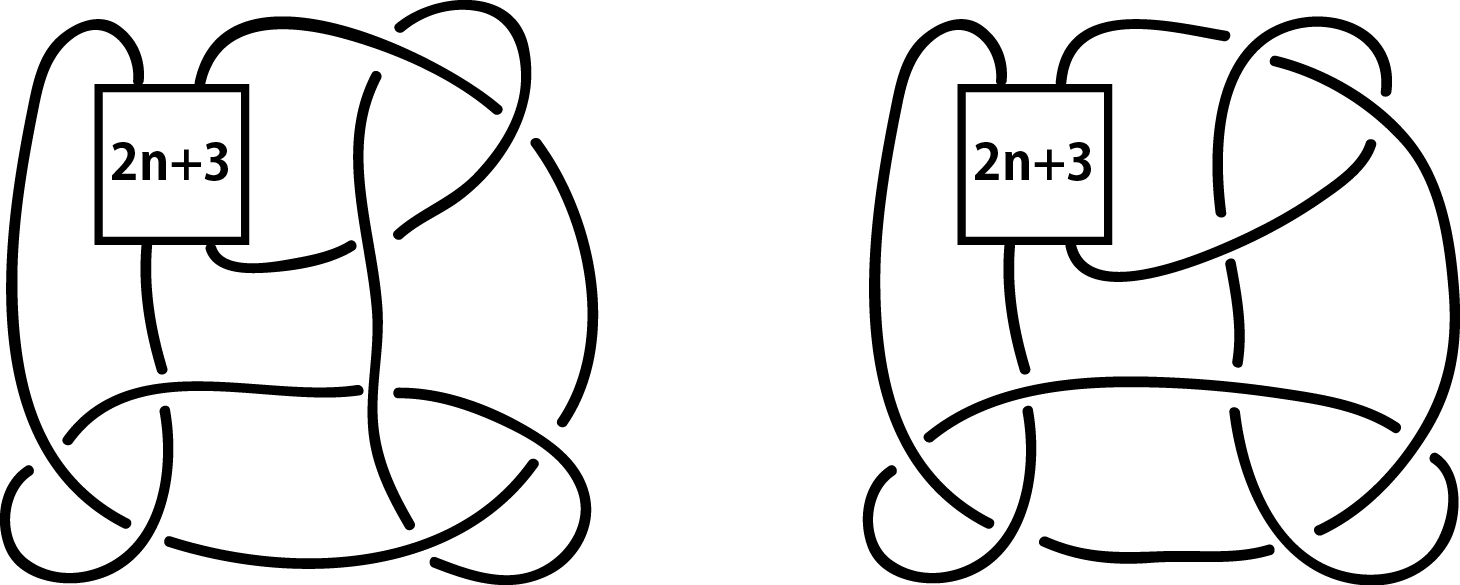}
  \caption{Mutant pair $M(-\frac{2}{3}, \frac{2}{3}, \frac{2}{3}, \frac1{2n+3})$ and $M(\frac{2}{3}, -\frac{2}{3}, \frac{2}{3}, \frac1{2n+3})$}
\end{figure}

\clearpage
\begin{theorem}\label{thm3_4}
The  mutant pair  $M(-\frac{3}{5}, \frac{2}{3}, \frac{2}{3}, \frac1{2n+2})$ and $M(\frac{2}{3}, -\frac{3}{5}, \frac{2}{3}, \frac1{2n+2})$ are distinct knots with  the minimal crossing number and the arc index  equal to $2n+12$, for $n\ge0$.
\end{theorem}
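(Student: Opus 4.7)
The strategy parallels Theorems~\ref{thm3_1}--\ref{thm3_3}: (i) verify that the two mutants are inequivalent, (ii) identify their common minimal crossing number as $2n+12$, and (iii) bracket the common arc index between the Kauffman breadth lower bound of Proposition~\ref{prop:breadth+2} and the upper bound $\alpha(L)\le c(L)$ for prime nonalternating links from Proposition~2.4.

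For distinctness, I would invoke the Bonahon--Siebenmann classification of Montesinos links. For $n\geq 3$ tangles with $|\beta_i/\alpha_i|<1$, the unoriented link type is determined by $e-\sum\beta_i/\alpha_i$ together with the cyclic sequence $(\beta_i/\alpha_i\bmod 1)$ up to reversal. After normalizing $-3/5\equiv 2/5\pmod{1}$, the two knots give cyclic sequences $(2/5,2/3,2/3,1/(2n+2))$ and $(2/3,2/5,2/3,1/(2n+2))$, with the same rational invariant $e-\sum\beta_i/\alpha_i$. In the first, $2/5$ is cyclically adjacent to $1/(2n+2)$; in the second, it is flanked by the two copies of $2/3$. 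The cyclic classes therefore differ even under reversal, so the knots are inequivalent.

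For the crossing number and arc index, the bound $c(L)\le 2n+12$ is immediate from the standard Montesinos diagram, whose minimal alternating tangle expansions contribute $4+3+3+(2n+2)$ crossings. Since the Kauffman polynomial is mutation invariant, a single calculation of $\breadth_a F_L$ handles both mutants. I would prove $\breadth_a F_L\ge 2n+10$ by induction on $n$: verify the base case $n=0$ by a direct Kauffman polynomial computation on the resulting $12$-crossing diagram, then set up a recursion by applying the Kauffman skein relation twice inside the $\frac{1}{2n+2}$-twist region, so that each pair of added positive crossings raises the top $a$-degree by $2$ and leaves the bottom $a$-degree fixed. Proposition~\ref{prop:breadth+2} then gives $\alpha(L)\ge 2n+12$, and since $L$ is prime and nonalternating Proposition~2.4 forces $\alpha(L)\le c(L)\le 2n+12$, so $c(L)=\alpha(L)=2n+12$.

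The main obstacle is the inductive control of the extremal $a$-coefficients of $F_L$: cancellation in the leading or trailing terms could in principle collapse the breadth when the skein relation propagates through the non semi-alternating portion of the diagram. One has to track these coefficients explicitly, say by deriving a two-term recursion $F_{n+1}=P\,F_n+Q\,F_{n-1}$ from the skein move and verifying noncancellation step by step. A more geometric alternative, closer in spirit to the Beltrami--Cromwell construction recalled in Section~2, is to exhibit a Hamiltonian path on the Montesinos diagram after absorbing the $\frac{1}{2n+2}$-twists; this would furnish an explicit arc presentation with $2n+12$ arcs and bypass the Kauffman recursion altogether.
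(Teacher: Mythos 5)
Your plan coincides with the paper's proof on its two load-bearing parts. Distinctness is obtained exactly as you describe, from the classification of Montesinos links (Theorem~\ref{thm3_9}); your observation that $2/5$ is flanked by the two copies of $2/3$ in one cyclic word but adjacent to $\frac{1}{2n+2}$ in the other is the intended argument. The lower bound $\alpha(L)\ge 2n+12$ is likewise obtained from Proposition~\ref{prop:breadth+2} by an induction on $n$ that pushes the Kauffman skein relation through the $\frac{1}{2n+2}$ twist box from a computer-verified base case; the appendix records $\Lambda_{2n}=[-z^2a^{-(2n+4)},\,-z^2a^{6}]$, so the breadth is $2n+10$ (in the paper's normalization it is the bottom $a$-degree that moves, not the top, but that is only a mirror/writhe convention). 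The one genuine divergence is the upper bound on the arc index: the paper does not pass through $\alpha(L)\le c(L)$ but instead exhibits an explicit arc presentation with $2n+12$ arcs by threading a binding circle through the Montesinos diagram (Figures~\ref{fig5} and~\ref{fig:2n+12 arcs}) --- precisely the ``geometric alternative'' you relegate to your last sentence. Your route via the Jin--Park inequality also works, but it makes the whole theorem hinge on $L$ being prime and nonalternating, neither of which you verify; nonalternating-ness cannot be waved away, since an alternating knot with $\breadth_a F_L=2n+10$ would have crossing number $2n+10$ rather than $2n+12$, and it is needed (together with primality) to convert $\alpha(L)=2n+12$ into $c(L)\ge 2n+12$ in either approach. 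Finally, the recursion produced by the skein relation is not the clean two-term linear recursion $F_{n+1}=PF_n+QF_{n-1}$ you posit: as in Lemma~\ref{lemma3_11}, one gets $\Lambda_{2n}=z\Lambda_{D_{2n-1}}+za^{-(2n+1)}\Lambda_{D'}-\Lambda_{2n-2}$, which drags along auxiliary diagrams $D'$ and $D_1$ whose extremal terms must be computed separately and shown not to cancel against those of the other summands.
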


\begin{figure}[h!]
  \centering
		\includegraphics[height=2.8cm]{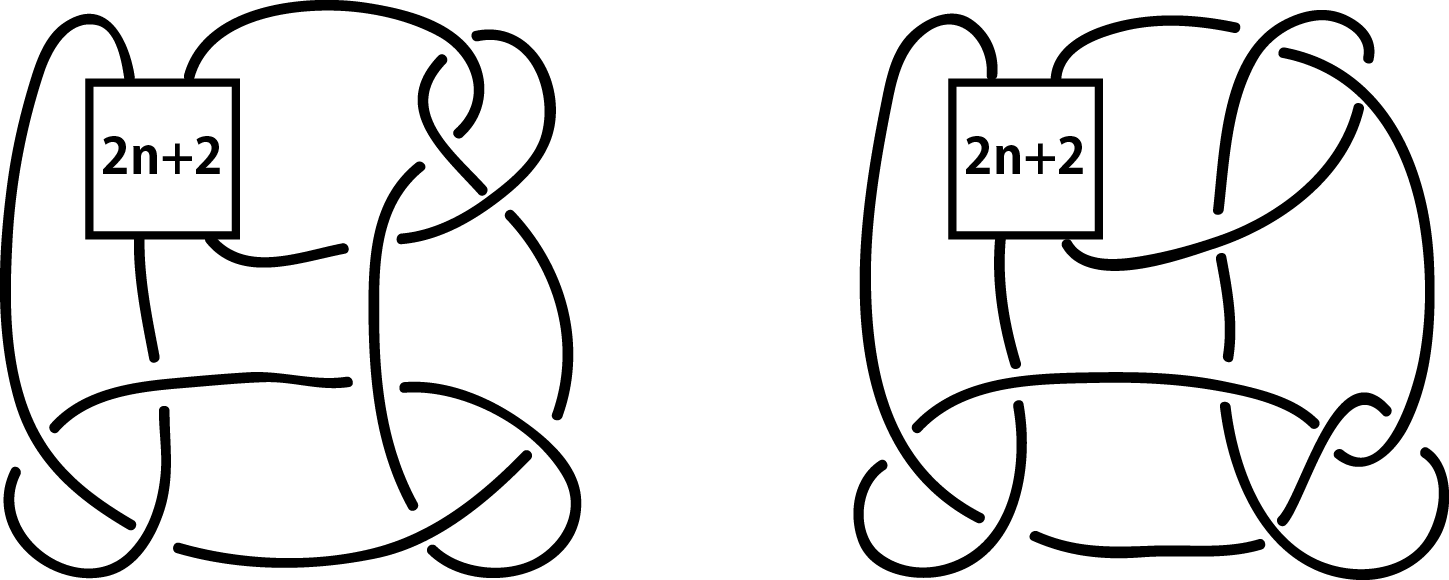}
  \caption{Mutant pair $M(-\frac{3}{5}, \frac{2}{3}, \frac{2}{3}, \frac1{2n+2})$ and $M(\frac{2}{3}, -\frac{3}{5}, \frac{2}{3}, \frac1{2n+2})$}
\end{figure}

\begin{theorem}\label{thm3_5}
The  mutant pair  $M(-\frac{3}{5}, -\frac{2}{3}, -\frac{2}{3}, \frac1{2n+2})$ and $M(-\frac{2}{3}, -\frac{3}{5}, -\frac{2}{3}, \frac1{2n+2})$ are distinct knots with  the minimal crossing number and the arc index  equal to $2n+12$, for $n\ge0$.
\end{theorem}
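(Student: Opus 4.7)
My plan follows the template that the statements of Theorems~\ref{thm3_1}--\ref{thm3_4} suggest: exhibit the two knots as distinct, construct an explicit arc presentation for each to bound $\alpha$ above, and match that upper bound with the Kauffman polynomial lower bound from Proposition~\ref{prop:breadth+2}.

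For distinctness, I cannot use HOMFLY, Jones, or Kauffman polynomials because they are mutation invariants. Instead I appeal to the Bonahon--Siebenmann classification of Montesinos knots. Reducing the slopes modulo $1$, the two tuples become $(\tfrac{2}{5},\tfrac{1}{3},\tfrac{1}{3},\tfrac{1}{2n+2})$ and $(\tfrac{1}{3},\tfrac{2}{5},\tfrac{1}{3},\tfrac{1}{2n+2})$, which share the same multiset but differ in cyclic order. Listing the eight reorderings of the first tuple obtained by cyclic permutations and reversal, I check that the second tuple is not among them; hence the two Montesinos knots are non-equivalent for every $n\ge 0$.

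For the upper bound, the four rational tangles $-\tfrac{3}{5},-\tfrac{2}{3},-\tfrac{2}{3},\tfrac{1}{2n+2}$ contribute $4,\,3,\,3,\,2n+2$ crossings respectively in their standard continued-fraction forms, totalling $2n+12$, so $c\le 2n+12$. To bound $\alpha$, I build an arc presentation with $2n+12$ arcs for each knot by finding a Hamiltonian path on the overlay graph of the diagram, in the style of Beltrami--Cromwell: the $(2n+2)$-twist region contributes $2n+2$ arcs, and the remaining $10$ crossings in the three non-twist tangles are absorbed by $10$ more arcs via the same gadget used for the corresponding sub-tangles in Theorem~\ref{thm3_4}. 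This construction is carried out for each mutant separately, so that mutation invariance of $\alpha$ is established by coincidence of two independently computed values rather than by transporting an arc presentation across the mutation.

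For the lower bound, Proposition~\ref{prop:breadth+2} gives $\alpha(L)\ge \breadth_a(F_L)+2$, and because $F_L$ is mutation invariant, the same bound applies to both members of the pair. The decisive calculation is $\breadth_a(F_L)\ge 2n+10$. I would verify this by a Kauffman skein recursion on a single crossing of the $\tfrac{1}{2n+2}$ tangle, showing that the $a$-breadth grows by $2$ when $n$ increases by $1$, and handling the base case $n=0$ by computing $F_L$ for the resulting $12$-crossing knot (or reading it off a table). Combining the two bounds gives $\alpha=2n+12$, and $c=2n+12$ follows from the same breadth data together with the Murasugi--Kauffman inequality applied to the (nearly alternating) sub-tangles of the Montesinos diagram. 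The main obstacle is precisely this breadth computation: the twist tangle itself is tame under the skein, but the interaction of $-\tfrac{3}{5}$ with the two $-\tfrac{2}{3}$ tangles must be tracked carefully to exclude unexpected cancellations that would lower the $a$-breadth.
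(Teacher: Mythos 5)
Your overall strategy coincides with the paper's: distinctness via the classification of Montesinos links by the tuple of fractions mod $1$ up to cyclic permutation and reversal (the paper quotes this from Burde--Zieschang as Theorem~\ref{thm3_9}; your observation that $\tfrac{2}{5}$ and $\tfrac{1}{2n+2}$ are cyclically adjacent in one tuple but not the other is exactly the right verification), an explicit arc presentation with $2n+12$ arcs for the upper bound, and Proposition~\ref{prop:breadth+2} with $\breadth_a(F_L)=2n+10$ for the lower bound, the breadth being obtained by a skein induction on the uppermost crossing of the $\tfrac{1}{2n+2}$ twist box with machine-computed base cases. The paper actually carries out the induction you only announce: its appendix records $\Lambda_{2n}=[-z^4a^{-(2n+6)},-4a^4]$ and tracks the extreme $a$-degree terms of every summand in the recursion to rule out the cancellations you worry about. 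That computation is the entire content of the lower bound, so as written your proposal defers the one nontrivial step.

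The step where your route is wrong rather than merely incomplete is the crossing number. The breadth data cannot give $c(L)\ge 2n+12$: the Kauffman--Murasugi--Thistlethwaite-type inequalities yield only $c(L)\ge\breadth_a(F_L)=2n+10$, two short of the claim, and ``applying the inequality to sub-tangles'' is not a theorem. The argument the paper's own setup supplies is different: these knots are prime and nonalternating, so by the quoted result of Jin--Park one has $\alpha(L)\le c(L)$; together with $\alpha(L)=2n+12$ and the exhibited diagram with $2n+12$ crossings this forces $c(L)=2n+12$. You should replace your crossing-number paragraph with this (noting that nonalternatingness itself needs a justification, e.g.\ from the classification of alternating Montesinos links, since the breadth computations alone do not exclude an alternating representative with $2n+10$ crossings).
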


\begin{figure}[!ht]
  \centering
		\includegraphics[height=2.8cm]{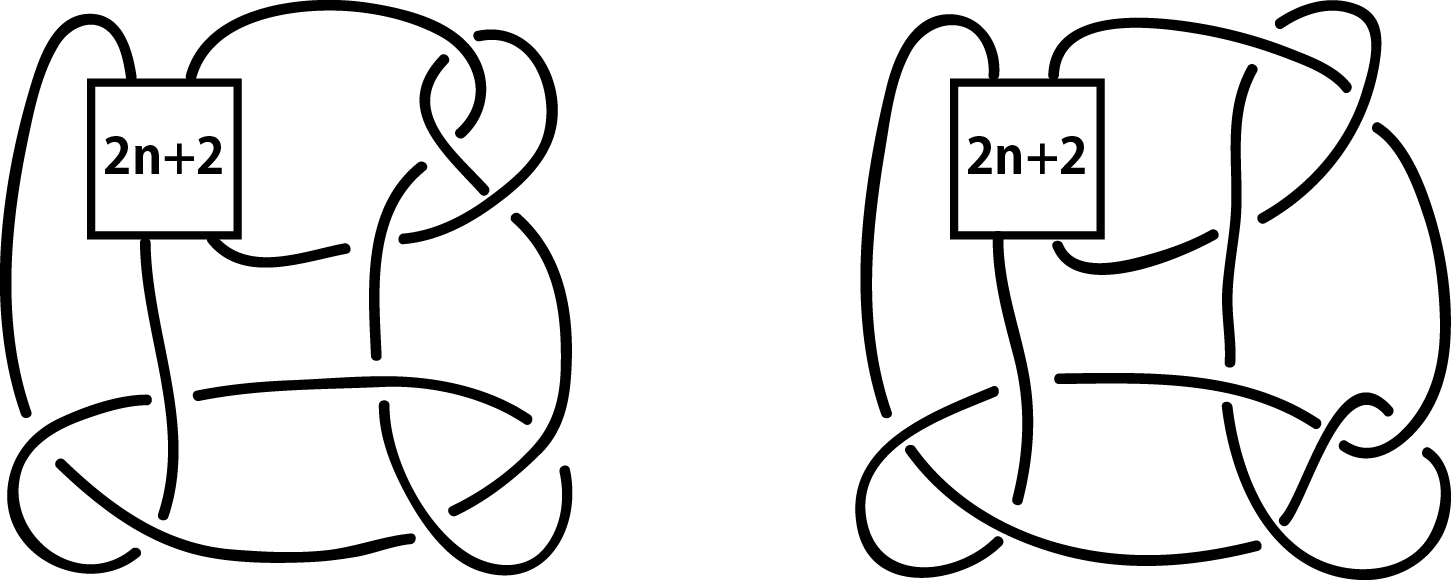}
  \caption{Mutant pair $M(-\frac{3}{5}, -\frac{2}{3}, -\frac{2}{3}, \frac1{2n+2})$ and $M(-\frac{2}{3}, -\frac{3}{5}, -\frac{2}{3}, \frac1{2n+2})$}
\end{figure}

\begin{theorem}\label{thm3_6}
The  mutant pair  
$M(\frac{1}{2}, \frac{3}{5}, -\frac{2}{3}, \frac1{2n+3})$ and 
$M(\frac{1}{2}, -\frac{2}{3}, \frac{3}{5}, \frac1{2n+3})$ are distinct knots with  the minimal crossing number and the arc index  equal to $2n+12$, for $n\ge0$.
\end{theorem}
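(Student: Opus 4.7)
The plan is to verify three claims in sequence: (i) the two Montesinos knots are inequivalent, (ii) the common arc index equals $2n+12$, and (iii) the common minimal crossing number equals $2n+12$. Because mutation preserves the Kauffman polynomial, the polynomial-based lower bound used in (ii) needs to be computed on only one representative of the pair, and the same diagram serves as the upper bound in (iii).

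First I would establish inequivalence using Bonahon's classification of Montesinos knots with $n\ge 3$ rational tangles: two such knots with $e=0$ and all $\alpha_i\ge 2$ are equivalent iff their slope sequences, reduced modulo $1$, lie in the same dihedral orbit. A direct enumeration of the eight cyclic and reverse-cyclic orderings of $(\frac12,\frac35,-\frac23,\frac1{2n+3})$ produces no match with $(\frac12,-\frac23,\frac35,\frac1{2n+3})$, so the knots are distinct for every $n\ge 0$.

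Next I would pin down the arc index by squeezing it between matching bounds. For the upper bound, I would construct an explicit arc presentation with $2n+12$ arcs using the Beltrami--Cromwell Hamiltonian-path technique employed in Theorems~\ref{thm3_1}--\ref{thm3_5}: the long twist box $\frac1{2n+3}$ is attached at a location where the diagram admits such a Hamiltonian path, yielding $\alpha(L)\le 2n+12$. For the lower bound I would compute $\breadth_a(F_L(a,z))=2n+10$ by a skein recursion in the twist region; Proposition~\ref{prop:breadth+2} then gives $\alpha(L)\ge 2n+12$. The minimal crossing number follows as a byproduct: the standard diagram contributes $2+4+3+(2n+3)=2n+12$ crossings from the reduced continued-fraction expansions of the four tangles, so $c(L)\le 2n+12$, and since the mixed signs of $\frac35$ and $-\frac23$ make this Montesinos form nonalternating, the characterization of prime nonalternating links by $\alpha(L)\le c(L)$ yields $c(L)\ge\alpha(L)=2n+12$.

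The main obstacle I expect is the Kauffman polynomial computation: one must verify that $\breadth_a(F_L(a,z))$ remains exactly $2n+10$ for every $n\ge 0$, which requires careful tracking of the highest and lowest $a$-degree terms through the skein recursion on the growing twist box and ruling out leading-term cancellations. This step parallels the Kauffman breadth arguments already needed for Theorems~\ref{thm3_1}--\ref{thm3_5}, so the same tools should adapt, but the bookkeeping for the specific tangle data $(\frac12,\frac35,-\frac23,\frac1{2n+3})$ is where genuine work is required.
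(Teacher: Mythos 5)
Your proposal matches the paper's proof in all essentials: distinctness via the classification of Montesinos links by their cyclic sequence of fractions mod $1$ (Theorem~\ref{thm3_9}), the upper bound $\alpha(L)\le 2n+12$ from an explicit arc presentation of the standard Montesinos diagram, the lower bound from $\breadth_a(F_L)=2n+10$ established by induction on $n$ via the Kauffman skein relation in the twist box together with Proposition~\ref{prop:breadth+2}, and the crossing number from the diagram bound combined with $\alpha(L)\le c(L)$ for prime nonalternating links. The paper carries out the breadth induction in detail only for Theorem~\ref{thm3_1} (Lemma~\ref{lemma3_11}) and records the corresponding data for Theorem~\ref{thm3_6} in the appendix, where $\Lambda_{2n}=[z^2a^{-(2n+6)},(-1)^n(2n+4)a^4]$ confirms your predicted breadth $2n+10$.
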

\begin{figure}[!ht]
  \centering
		\includegraphics[height=2.8cm]{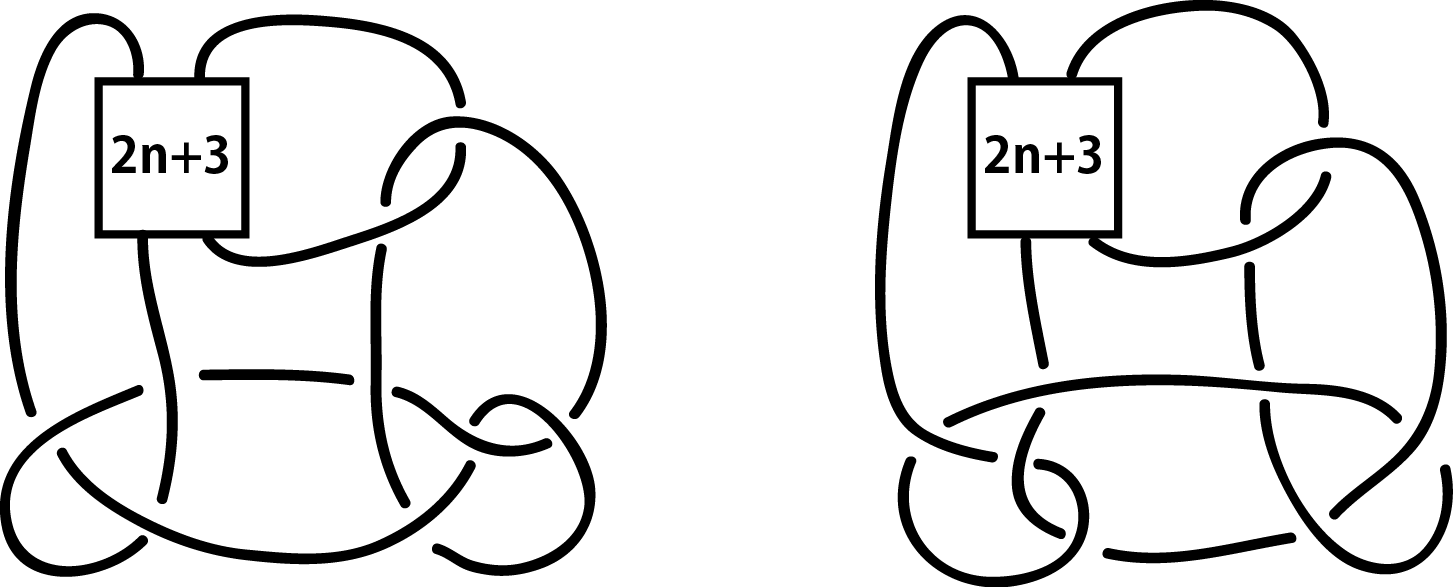}
  \caption{Mutant pair $M(\frac{1}{2}, \frac{3}{5}, -\frac{2}{3}, \frac1{2n+3})$ and $M(\frac{1}{2}, -\frac{2}{3}, \frac{3}{5}, \frac1{2n+3})$}
\end{figure}

\clearpage
\begin{theorem}\label{thm3_7}
The  mutant triple  $M(\frac{3}{5}, -\frac{2}{3}, \frac{2}{3}, \frac1{2n+2})$, $M(\frac{3}{5}, \frac{2}{3}, -\frac{2}{3}, \frac1{2n+2})$, and \break
$M(\frac{2}{3}, \frac{3}{5}, -\frac{2}{3}, \frac1{2n+2})$ are distinct knots with  the minimal crossing number and the arc index  equal to $2n+12$, for $n\ge0$.
\end{theorem}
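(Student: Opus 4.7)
The plan is to prove all four assertions following the blueprint of Theorems \ref{thm3_1}--\ref{thm3_6}. We must show that the three Montesinos knots are pairwise distinct and that each has minimal crossing number and arc index equal to $2n+12$. Since adjacent transpositions of rational tangles in a four-tangle Montesinos projection realize mutations, the three knots are mutants of one another and therefore share the same Kauffman polynomial; this lets us carry out the Kauffman breadth computation only once for the whole theorem.

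For the upper bounds, I first count the crossings in the standard Montesinos projections: the tangles $\tfrac{3}{5}$, $\pm\tfrac{2}{3}$, and $\tfrac{1}{2n+2}$ contribute $4$, $3$, $3$, and $2n+2$ crossings, respectively, for a total of $2n+12$. Hence $c(L) \le 2n+12$ for each of the three knots. For the arc index, I construct an arc presentation on $2n+12$ arcs. As in the earlier theorems, I insert a full twist at the same carefully chosen location in the diagram that was used before and realize the resulting picture on an open book via a Hamiltonian-path type construction; the inserted full twist is absorbed into the binding circle so that the final arc count equals the original crossing count. The construction is essentially identical for all three mutants, giving $\alpha(L) \le 2n+12$ in each case.

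The lower bound on $\alpha(L)$ comes from Proposition~\ref{prop:breadth+2}: it suffices to prove $\breadth_a(F_L(a,z)) \ge 2n+10$. Using the tangle skein calculus, I expand $\Lambda_D$ along the $(2n+2)$-twist region via the Kauffman skein relation to obtain a linear recursion whose coefficients are Kauffman polynomials of two smaller Montesinos diagrams; checking the base case $n=0$ by direct computation and tracking the highest and lowest $a$-powers through the recursion yields $\breadth_a(F_L) = 2n+10$, hence $\alpha(L) \ge 2n+12$. Since each knot is prime and nonalternating, the last proposition of Section~2 (from \cite{jp}) then gives $\alpha(L) \le c(L)$, so $c(L) \ge 2n+12$, completing the stated equalities. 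Pairwise distinctness follows from Bonahon's classification of Montesinos knots: the three slope sequences $(\tfrac{3}{5}, -\tfrac{2}{3}, \tfrac{2}{3}, \tfrac{1}{2n+2})$, $(\tfrac{3}{5}, \tfrac{2}{3}, -\tfrac{2}{3}, \tfrac{1}{2n+2})$, and $(\tfrac{2}{3}, \tfrac{3}{5}, -\tfrac{2}{3}, \tfrac{1}{2n+2})$ lie in distinct orbits of the cyclic/reversal action on four-tuples of slopes modulo integers, as a direct enumeration of the eight cyclic and reverse-cyclic representatives of each sequence confirms.

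The main obstacle is the Kauffman breadth computation. While mutation invariance collapses three computations to one, the presence of the parameter $n$ forces us to propagate a skein recursion through the $(2n+2)$-twist region and to verify that the extreme $a$-monomials never cancel at any stage; this is the only step that requires genuine calculation. Once the breadth is pinned down, the crossing-number equality, the arc-index equality, and the distinctness of the triple reduce to diagrammatic bookkeeping and the invocation of the classification results already cited.
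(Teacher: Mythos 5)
Your proposal is correct and follows essentially the same route as the paper: distinctness via the classification of Montesinos links by the ordered fractions mod $1$ up to cyclic permutation and reversal, an explicit arc presentation threading the binding circle through the twist boxes to get the upper bound $2n+12$, and an induction on $n$ through the Kauffman skein relation in the $(2n+2)$-twist region (with the small cases computed directly) to pin down $\breadth_a(F_L)=2n+10$ and hence the lower bound via Proposition~\ref{prop:breadth+2}. Your explicit use of the proposition from \cite{jp} to convert $\alpha(L)=2n+12$ into $c(L)=2n+12$ is exactly the step the paper leaves implicit, so nothing essential differs.
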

\begin{figure}[!ht]
  \centering
		\includegraphics[height=2.8cm]{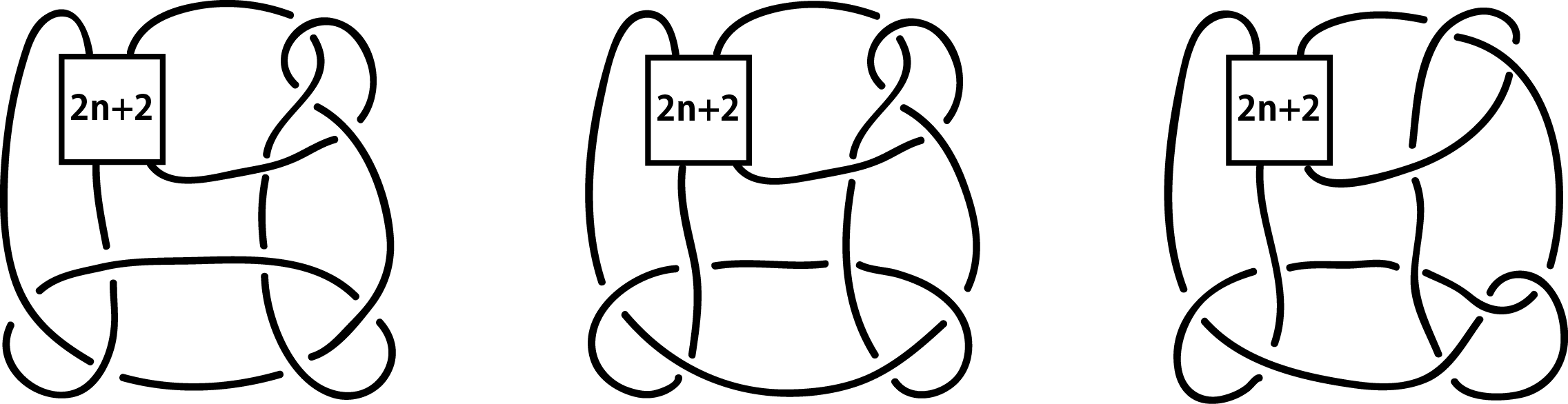}
  \caption{Mutant triple $M(\frac{3}{5}, -\frac{2}{3}, \frac{2}{3}, \frac1{2n+2})$, $M(\frac{3}{5}, \frac{2}{3}, -\frac{2}{3}, \frac1{2n+2})$, and $M(\frac{2}{3}, \frac{3}{5}, -\frac{2}{3}, \frac1{2n+2})$}
\end{figure}

\section{Proof of  Theorems~\ref{thm3_1}--\ref{thm3_7}}
By the theorem below, we easily see that the mutants in each of Theorems~\ref{thm3_1}--\ref{thm3_7} are distinct knots.

\begin{theorem}[\cite{9} Classification of Montesinos links]\label{thm3_9}
Montesinos links with $r$ rational tangles, $r \ge 3$ and $\sum\limits_{j=1}^r {\frac{1}{{\alpha}_{j}}} \le r-2$, are classified by the ordered set of fractions  
$\big( \frac{{\beta_1}}{{\alpha_1}} \mod1, \cdots ,$ 
$ \frac{{\beta_r}}{{\alpha_r}} \mod1 \big)$, up to cyclic permutations and reversal of order, together with the rational number $e_0 = e + \sum\limits_{j=1}^r {\frac{{\beta}_{j}}{{\alpha}_{j}}}$.
\end{theorem}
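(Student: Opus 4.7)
The plan is to prove the classification via the double branched cover. Recall that the double cover $\Sigma_2(L)$ of $S^3$ branched along a Montesinos link $L = M(e; \beta_1/\alpha_1, \dots, \beta_r/\alpha_r)$ is a Seifert fibered 3-manifold. Explicitly, the double cover of a rational tangle of slope $\beta_j/\alpha_j$ is a solid torus, and the $e$ central twists together with the $r$ rational tangles glue these solid tori onto the double cover of the $r$-punctured $S^2$, producing a Seifert fibration $\Sigma_2(L) \to S^2$ with exceptional fibers of multiplicities $\alpha_j$. A direct computation identifies the normalized Seifert invariants at the cone points as $\beta_j/\alpha_j \bmod 1$ and shows that the rational Euler number of the fibration is exactly $e_0 = e + \sum_{j=1}^r \beta_j/\alpha_j$. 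By the Smith conjecture together with equivariant sphere/loop theorems, the covering involution recovers $L$ from $\Sigma_2(L)$, so the problem reduces to classifying such Seifert fibered spaces together with the involution.

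The next step is to invoke the uniqueness theorem for Seifert fibrations: the Seifert invariants of a closed orientable Seifert fibered 3-manifold over $S^2$ are determined by the manifold up to cyclic permutation and order reversal of the exceptional fibers, provided the base orbifold has nonpositive Euler characteristic. The orbifold Euler characteristic of $S^2(\alpha_1, \dots, \alpha_r)$ is $2 - r + \sum_j 1/\alpha_j$, so the hypothesis $\sum 1/\alpha_j \le r-2$ with $r \ge 3$ is precisely the nonpositivity condition. Under this hypothesis the tuple of residues $(\beta_j/\alpha_j \bmod 1)_{j=1}^{r}$ together with $e_0$ is a complete set of invariants of the Seifert fibration modulo the dihedral relabeling.

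For the converse direction, given any such data one reconstructs a Montesinos diagram by picking an integer $e$ and integer representatives $\beta_j$ with $e + \sum \beta_j/\alpha_j = e_0$; different choices are related by the elementary tangle moves that shift an integer twist between the central twist region and a rational tangle, all of which preserve the link type. The covering involution on $\Sigma_2(L)$ restricts to a strong inversion on each exceptional solid torus and is canonically determined by the Seifert structure up to isotopy, so the link $L$ itself is canonically recovered from the invariants.

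The main obstacle is the uniqueness of the Seifert fibration, which fails for the ``small'' Seifert manifolds (lens spaces, prism manifolds, spherical and some Euclidean space forms) whose base orbifolds carry positive Euler characteristic; the hypothesis $\sum 1/\alpha_j \le r-2$ is designed exactly to exclude these cases. Verifying this case-by-case exclusion, and checking that the dihedral action on the invariants captures all self-diffeomorphisms of the Seifert fibration (in particular that $e_0$ is preserved by each such move while only the tuple of residues is permuted), is the technical core of the argument; the tangle-to-Seifert dictionary from the first paragraph is a direct translation once the bookkeeping is set up.
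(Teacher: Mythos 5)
This theorem is quoted in the paper from Burde--Zieschang~\cite{9} without proof, so there is no in-paper argument to compare against; your sketch follows the standard double-branched-cover strategy that the cited proof (essentially Bonahon and Bonahon--Siebenmann) also uses. The dictionary from tangles to a Seifert fibration of the double branched cover over $S^2$ with exceptional fibers of orders $\alpha_j$, the identification of $e_0$ with the rational Euler number, and the observation that $\sum_j 1/\alpha_j \le r-2$ is exactly non-positivity of the orbifold Euler characteristic of $S^2(\alpha_1,\dots,\alpha_r)$, hence uniqueness of the Seifert fibration, are all correct.

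There is, however, a genuine gap at the decisive step. You assert that the Seifert invariants are determined by the manifold ``up to cyclic permutation and order reversal of the exceptional fibers,'' and later that the covering involution ``is canonically determined by the Seifert structure up to isotopy.'' Neither claim is right, and together they are inconsistent with the theorem being proved. Uniqueness of the Seifert fibration pins down the invariants only as an \emph{unordered multiset}: every permutation of the cone points of the base $S^2$ is realized by a fiber-preserving self-diffeomorphism, so the manifold $\Sigma_2(L)$ alone cannot detect a cyclic order. If the involution were moreover determined up to isotopy by the manifold, the link would be recovered from the unordered multiset of fractions together with $e_0$ --- which is false, and would in particular force each mutant pair in Theorems~\ref{thm3_1}--\ref{thm3_7} to be a single knot, since the two members of each pair share the same multiset of residues and the same $e_0$. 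The cyclic order up to dihedral symmetry is an invariant of the \emph{pair} consisting of the Seifert manifold and the covering involution, not of the manifold: one must show that any involution with quotient $S^3$ can be isotoped to be fiber-preserving, that it covers a reflection of the base sphere whose fixed circle passes through all the cone points (thereby endowing them with a cyclic order), and that equivariant diffeomorphisms can permute the cone points only dihedrally along that circle. This analysis of involutions is the actual core of the classification and is missing from --- indeed contradicted by --- your sketch.
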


Now we show that each knot in Theorems~\ref{thm3_1}--\ref{thm3_7} has an arc presentation with as many arcs as the mentioned arc index. In Figure~\ref{fig:2n+11 arcs} and Figure~\ref{fig:2n+12 arcs}, the thick curves are binding circles which have the pattern shown in Figure~\ref{fig5} near each box. One can easily count  that the number of arcs is as mentioned for each case.

\begin{figure}[h!]
  \centering
		\includegraphics[height=2.5cm]{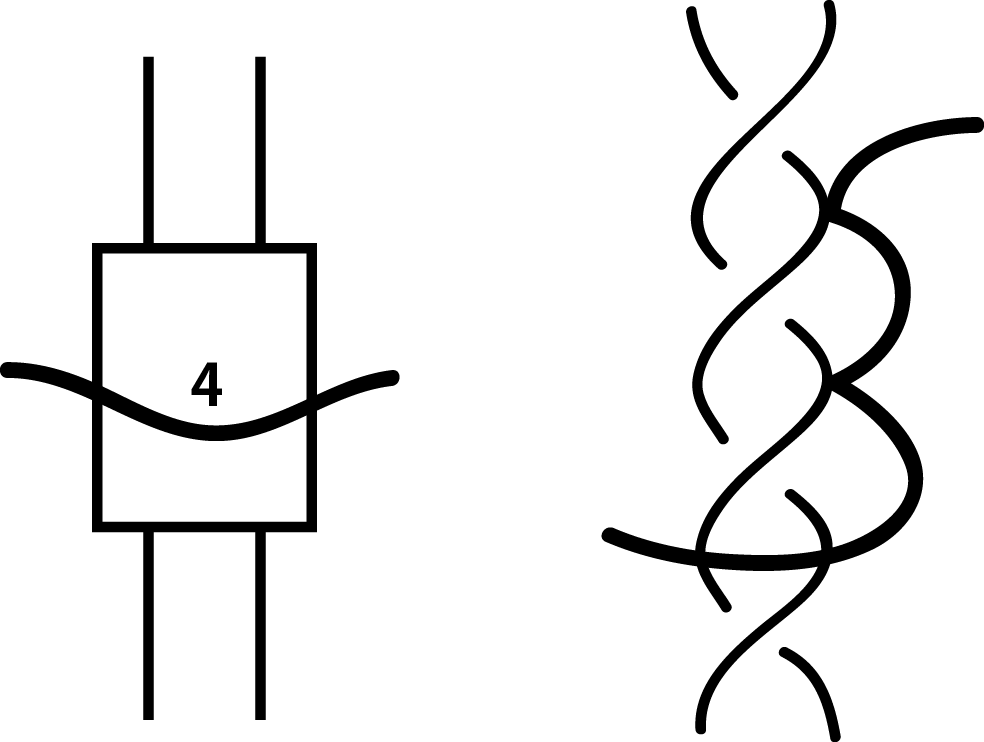}
  \caption{Deforming the curve $C$ near a twist box}\label{fig5}
\end{figure}

\begin{figure}[h!]\footnotesize
  \centerline{
		\includegraphics[height=2cm]{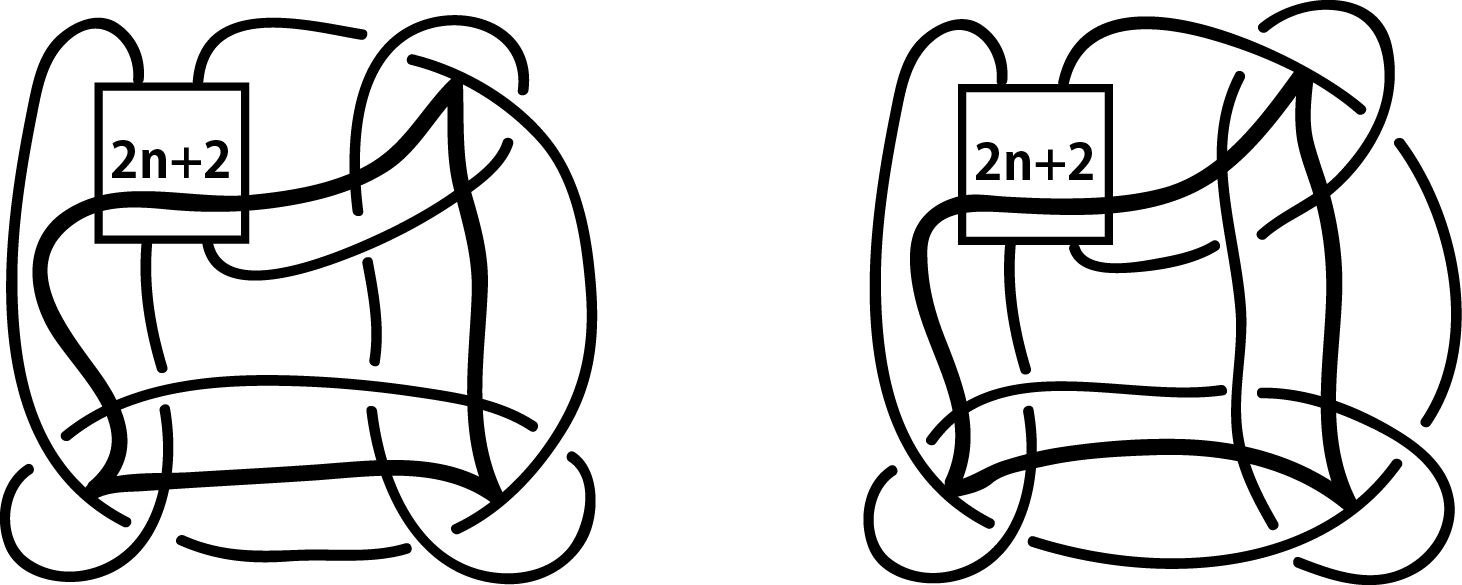}
		\hspace{10mm}
		\includegraphics[height=2cm]{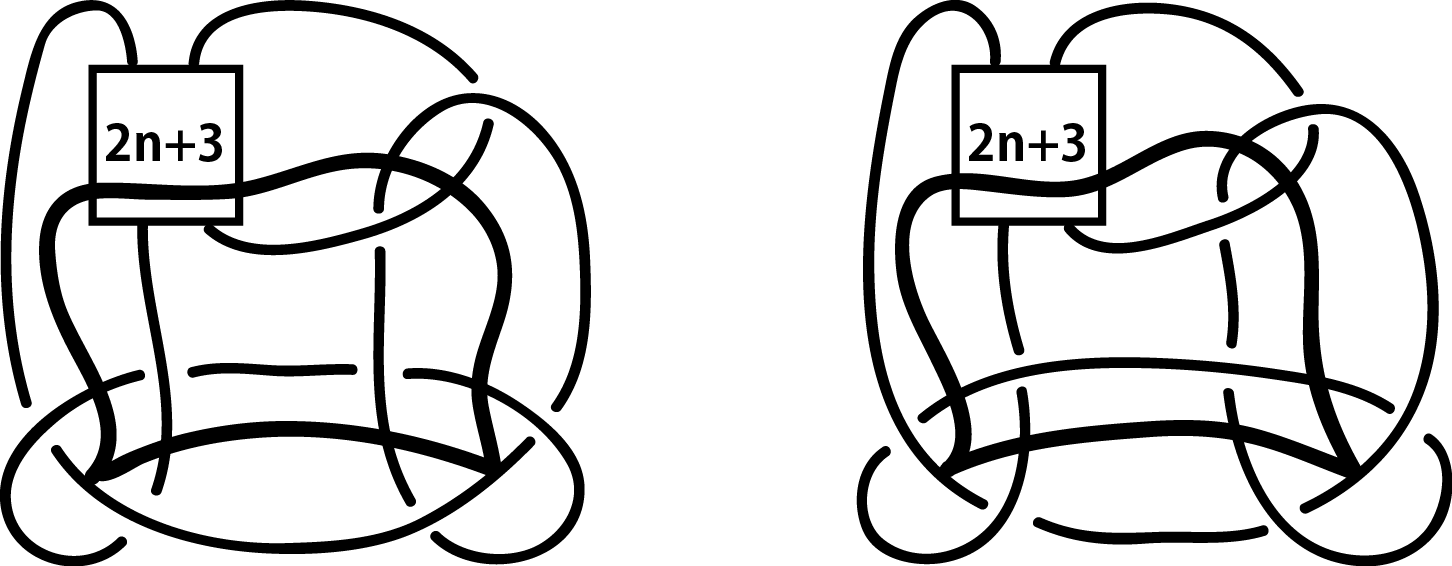}}

		\centerline{Theorem~\ref{thm3_1}\hspace{45mm}Theorem~\ref{thm3_2}}
  \caption{Arc presentations with $2n+11$ arcs}\label{fig:2n+11 arcs}
\end{figure}

\begin{figure}[h!]\footnotesize
  \centering
		\includegraphics[height=2cm]{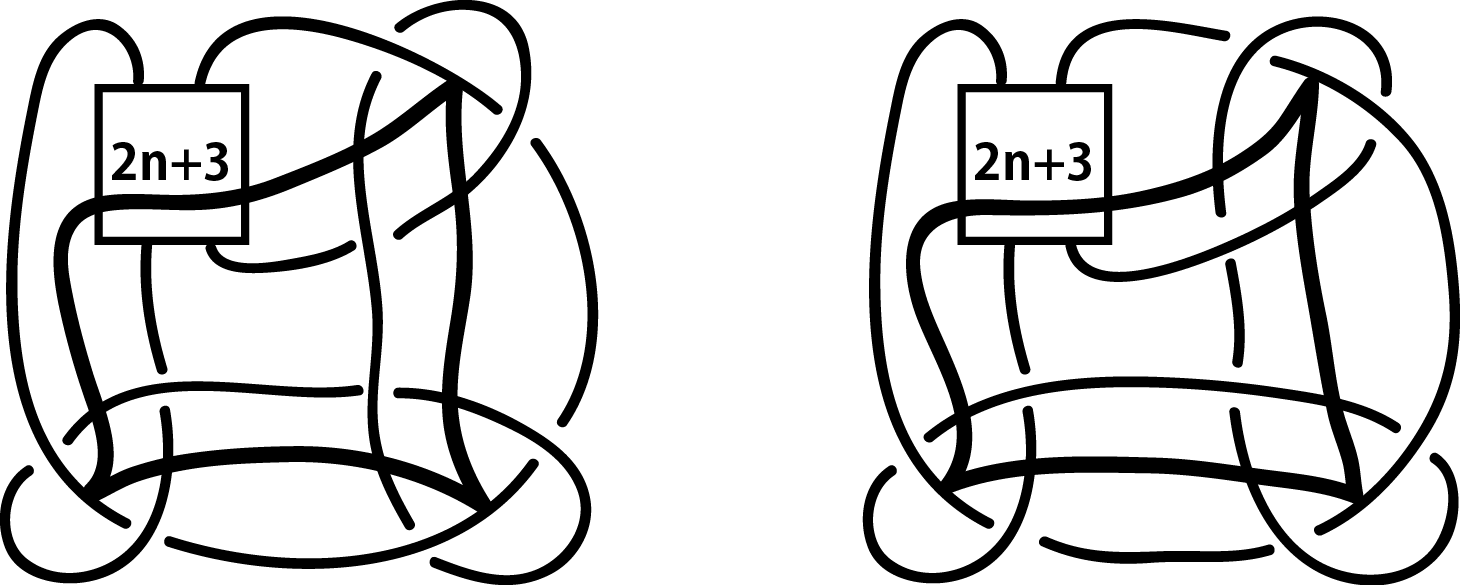}
\hspace{10mm}
		\includegraphics[height=2cm]{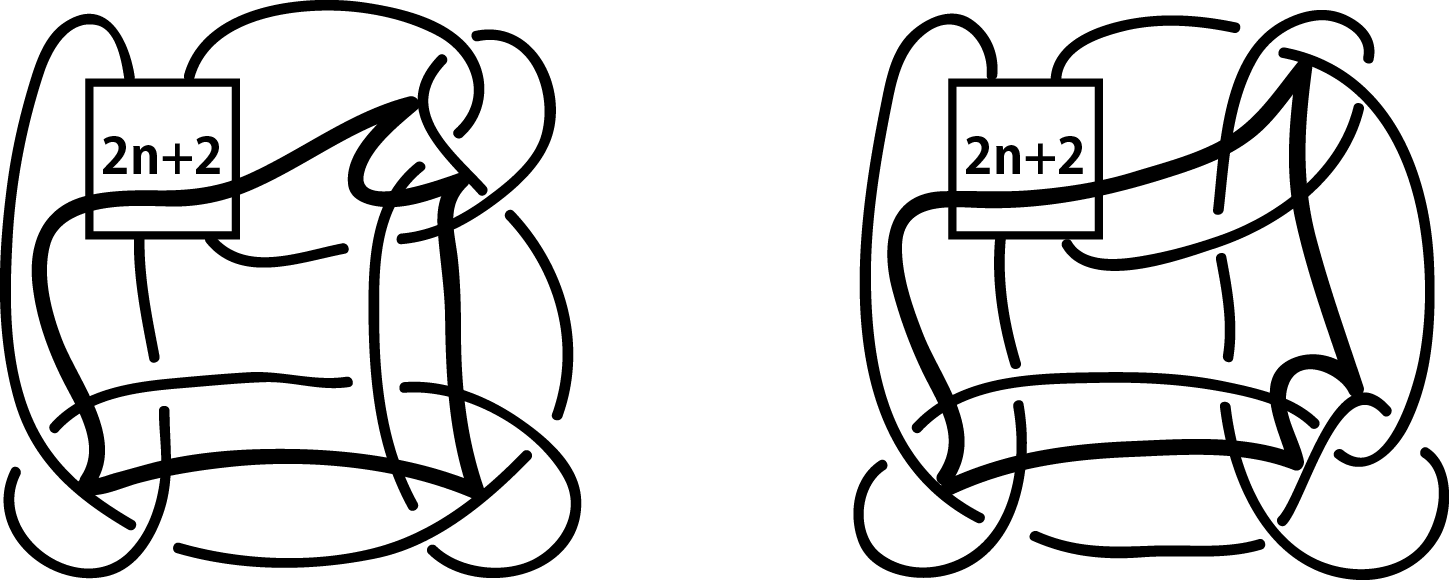}
		\hbox{Theorem~\ref{thm3_3}\hspace{45mm}Theorem~\ref{thm3_4}}
\\
\vspace{3mm}
		\includegraphics[height=2cm]{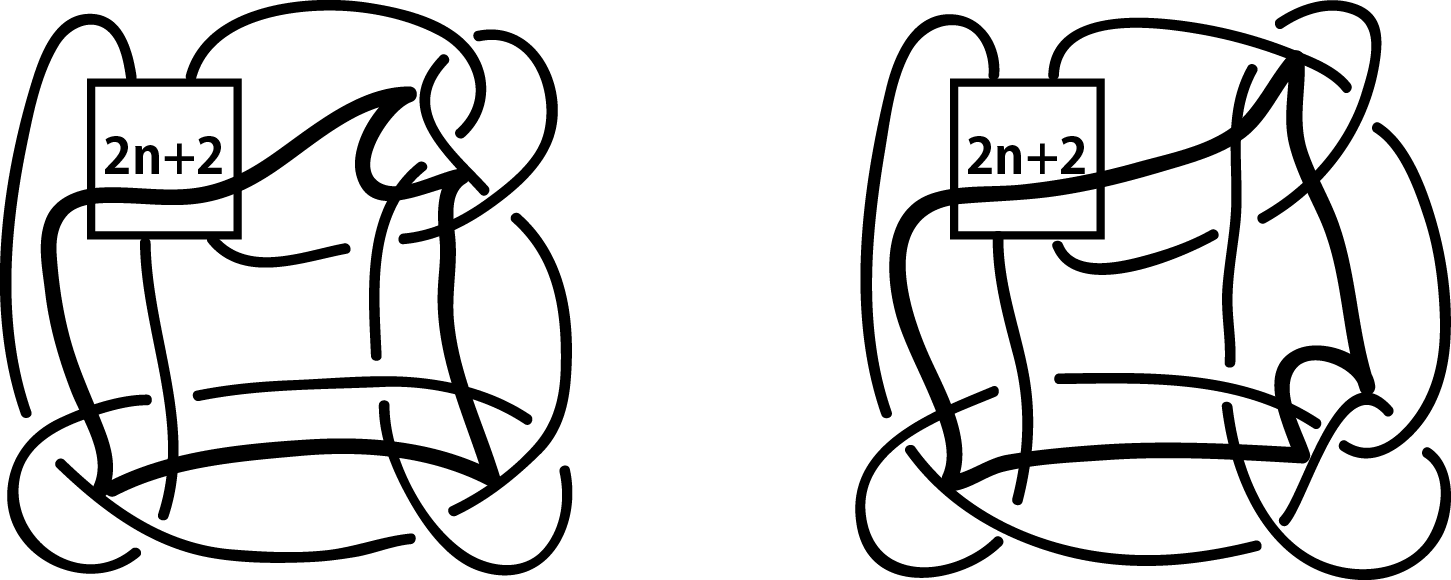}
\hspace{10mm}
		\includegraphics[height=2cm]{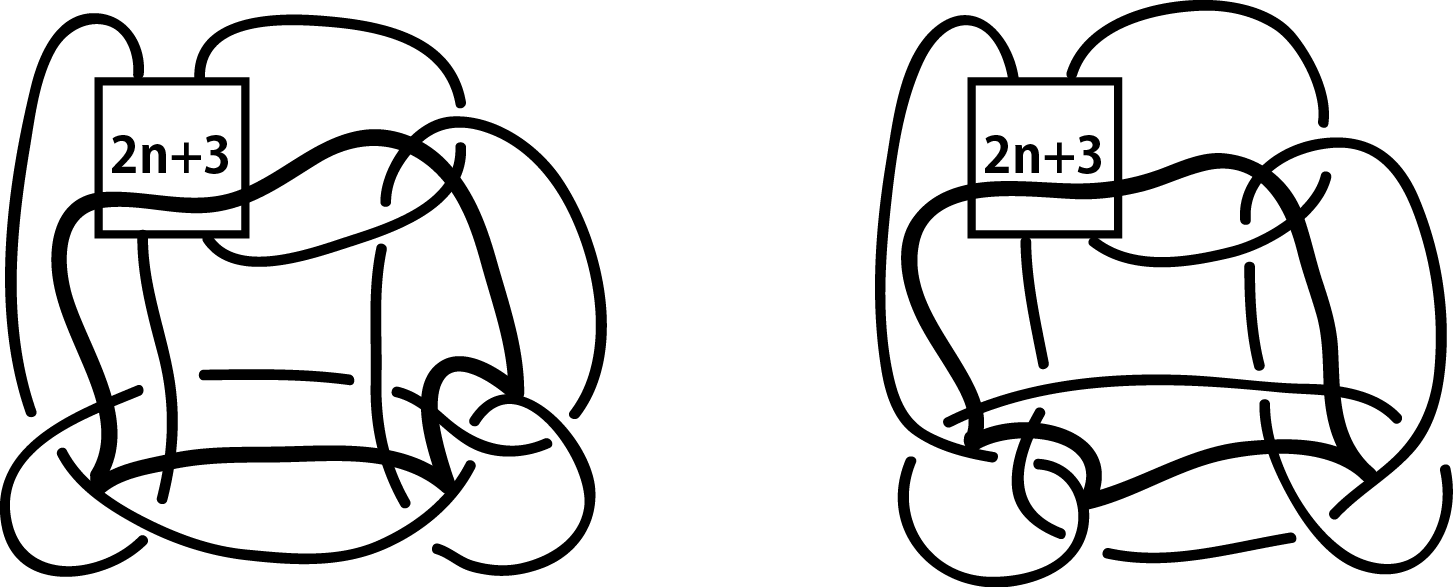}
		\hbox{Theorem~\ref{thm3_5}\hspace{45mm}Theorem~\ref{thm3_6}}
\\
\vspace{3mm}
		\includegraphics[height=2cm]{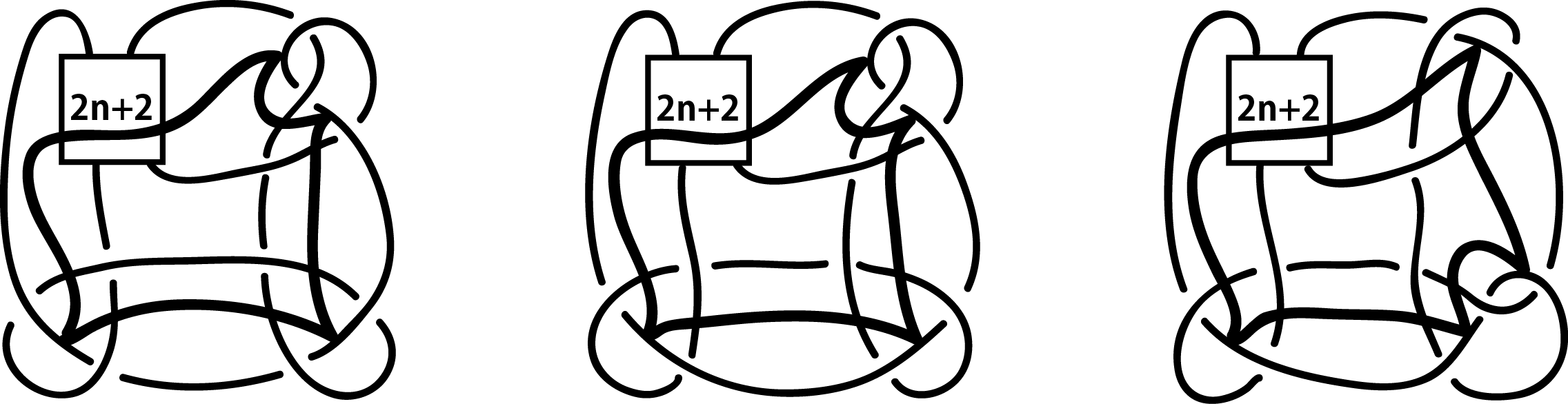}
\hbox to 0.7\textwidth{\hfill Theorem~\ref{thm3_7}\hfill}
  \caption{Arc presentations with $2n+12$ arcs}\label{fig:2n+12 arcs}
\end{figure}

The numbers of arcs we just counted are upper bounds of the arc index. Now it remains to show that the same numbers are lower bounds.
By Proposition~\ref{prop:breadth+2}, we only need to show that $\breadth_a(F_L(a,z))$ is 2 less than the number of arcs we counted. 

Table~1  shows the case $n=0$ for Theorems~\ref{thm3_1}--\ref{thm3_7}, where the names are as in the KnotInfo site~\cite{knotinfo}.

\begin{table}[ht]
{\begin{tabular}{@{}cllc@{}} \hline
\noalign{\smallskip}Theorem  &Montesinos knots&Name&Arc Index \\
\hline
\noalign{\smallskip}
\ref{thm3_1}&$M(\frac{2}{3}, -\frac{2}{3}, \frac{2}{3}, \frac{1}{2})$&$11n71^*$&11\vrule width0pt depth6pt\\
\noalign{\smallskip}
&$M(-\frac{2}{3}, \frac{2}{3}, \frac{2}{3}, \frac{1}{2})$ &$11n75$&11 \\
\noalign{\smallskip}\hline
\noalign{\smallskip}
\ref{thm3_2}&$M(\frac{1}{2}, \frac{2}{3}, -\frac{2}{3}, \frac1{3})$ &$11n76^*$&11\vrule width0pt depth6pt\\
\noalign{\smallskip}
&$M(\frac{1}{2}, -\frac{2}{3}, \frac{2}{3}, \frac1{3})$ &$11n78$&11 \\
\noalign{\smallskip}\hline
\noalign{\smallskip}
\ref{thm3_3}&$M(-\frac{2}{3}, \frac{2}{3}, \frac{2}{3}, \frac1{3})$ &$12n553$&12\vrule width0pt depth6pt\\
\noalign{\smallskip}
&$M(\frac{2}{3}, -\frac{2}{3}, \frac{2}{3}, \frac1{3})$ &$12n556^*$&12 \\
\noalign{\smallskip}\hline
\noalign{\smallskip}
\ref{thm3_4}&$M(-\frac{3}{5}, \frac{2}{3}, \frac{2}{3}, \frac1{2})$ &$12n55^*$&12\vrule width0pt depth6pt\\
\noalign{\smallskip}
&$M(\frac{2}{3}, -\frac{3}{5}, \frac{2}{3}, \frac1{2})$ &$12n223^*$&12 \\
\noalign{\smallskip}\hline
\noalign{\smallskip}
\ref{thm3_5}&$M(-\frac{3}{5}, -\frac{2}{3}, -\frac{2}{3}, \frac1{2})$ &$12n58^*$&12\vrule width0pt depth6pt\\
\noalign{\smallskip}
&$M(-\frac{2}{3}, -\frac{3}{5}, -\frac{2}{3}, \frac1{2})$ &$12n222$&12 \\
\noalign{\smallskip}\hline
\noalign{\smallskip}
\ref{thm3_6}&$M(\frac{1}{2}, \frac{3}{5}, -\frac{2}{3}, \frac1{3})$ &$12n64$&12\vrule width0pt depth6pt\\
\noalign{\smallskip}
&$M(\frac{1}{2}, -\frac{2}{3}, \frac{3}{5}, \frac1{3})$ &$12n261$&12 \\
\noalign{\smallskip}\hline
\noalign{\smallskip}
\ref{thm3_7}&$M(\frac{3}{5}, -\frac{2}{3}, \frac{2}{3}, \frac1{2})$ &$12n60$&12\vrule width0pt depth6pt\\
\noalign{\smallskip}
&$M(\frac{3}{5}, \frac{2}{3}, -\frac{2}{3}, \frac1{2})$ &$12n61$&12\vrule width0pt depth6pt\\
\noalign{\smallskip}
&$M(\frac{2}{3}, \frac{3}{5}, -\frac{2}{3}, \frac1{2})$ &$12n219$&12\vrule width0pt depth6pt\\
\hline
\end{tabular}}
\medskip
\caption{The case $n=0$. ($^*$ means mirror image)}
\end{table}

\noindent
Now we go by induction on $n\ge1$.
The lemma below handles the case of Theorem~\ref{thm3_1}.
\begin{lemma} \label{lemma3_11}
 ${\breadth}_a (F_{M(\frac{2}{3}, -\frac{2}{3}, \frac{2}{3}, \frac{1}{2n+2})}) ={\breadth}_a (F_{M(-\frac{2}{3}, \frac{2}{3}, \frac{2}{3}, \frac{1}{2n+2})}) = 2n+9$ for $n\ge1$.
\end{lemma}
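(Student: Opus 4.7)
The Kauffman polynomial is invariant under mutation, so $F_{K_n}=F_{K_n'}$ for $K_n := M(\tfrac{2}{3},-\tfrac{2}{3},\tfrac{2}{3},\tfrac{1}{2n+2})$ and $K_n' := M(-\tfrac{2}{3},\tfrac{2}{3},\tfrac{2}{3},\tfrac{1}{2n+2})$, which reduces the lemma to the single statement that $\breadth_a(F_{K_n}) = 2n+9$ for $n \ge 1$. The plan is to induct on $n$, seeded from $n=0$, where Table~1 identifies $K_0 = 11n71$ and the tabulated Kauffman polynomial in KnotInfo has $a$-breadth equal to $9$.

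For the inductive step I would apply the Kauffman skein relation
$$\Lambda_{D_+}+\Lambda_{D_-}=z\bigl(\Lambda_{D_{\|}}+\Lambda_{D_{=}}\bigr)$$
at a single crossing inside the $\tfrac{1}{2n+2}$ twist tangle of $K_n$. Because that tangle is a stack of $2n+2$ identical positive crossings, $D_-$ cancels an adjacent crossing by a Reidemeister~II move and reduces to $K_{n-1}$, while the two smoothings $D_{\|}$ and $D_{=}$ yield auxiliary links $A_n$ (a Montesinos-type diagram whose last tangle has $2n+1$ crossings) and $B_n$ (a simpler link, obtained by capping off the twist tangle with a $0$- or $\infty$-closure, whose isotopy type is essentially independent of $n$). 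Converting $\Lambda$ to $F$ with the appropriate writhe corrections yields a recursion of the form
$$F_{K_n}(a,z) \;=\; a^{\pm 2}\, F_{K_{n-1}}(a,z) \;+\; a^{\pm 1} z\,\bigl( F_{A_n}(a,z) + F_{B_n}(a,z)\bigr).$$

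To finish, I would argue that the extremal $a$-degree contributions from $F_{A_n}+F_{B_n}$ sit strictly inside the corresponding extremal degrees of $a^{\pm 2}F_{K_{n-1}}$, so the $a$-breadth grows by exactly $2$ at each step. For $A_n$ one applies a second skein step to pass from $2n+1$ back to $2n$ twist crossings, while for $B_n$ the fact that the link is $n$-independent immediately bounds the breadth. Together with Proposition~\ref{prop:breadth+2} and the inductive hypothesis, this yields $\breadth_a(F_{A_n}+F_{B_n}) \le 2n+7$, so only the $a^{\pm 2} F_{K_{n-1}}$ term can reach the predicted extremal $a$-degrees. Its leading and trailing $a$-coefficients are nonzero by induction, giving $\breadth_a(F_{K_n}) = 2n+9$.

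The main obstacle is verifying non-cancellation of the leading and trailing $a$-coefficients in the above recursion: one must rule out the possibility that the $a^{\pm 2}F_{K_{n-1}}$ term and the smoothed correction terms conspire to cancel at the extremes. In practice this means isolating the top (and separately bottom) $a$-coefficient of $F_{K_n}$ as an explicit polynomial in $z$, confirming by a direct computation at the base case that this coefficient is nonzero, and tracking it inductively along the recursion.
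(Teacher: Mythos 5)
Your skeleton matches the paper's: mutation invariance reduces to one knot of the pair, the base case is done by machine computation, and the induction applies the Kauffman skein relation at a crossing of the twist box, with $D_-$ reducing to the previous knot by a Reidemeister II move. But the decisive step of your inductive argument is wrong. You claim the extremal $a$-degrees of the smoothing terms sit strictly inside those of $a^{\pm 2}F_{K_{n-1}}$, so that only that term reaches the extremes. If that were true, the $a$-support of $F_{K_n}$ would be contained in the support of a single shift of $F_{K_{n-1}}$, giving $\breadth_a(F_{K_n})\le\breadth_a(F_{K_{n-1}})=2n+7$ rather than $2n+9$: multiplying by $a^{\pm 2}$ translates both extremes the same way and cannot increase breadth. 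For the breadth to grow by $2$ per step, a smoothing term must overshoot one extreme of the $F_{K_{n-1}}$ term, and that is exactly what happens in the paper: the $D_=$ smoothing produces a diagram with $2n+1$ curls, and removing them by Reidemeister I moves attaches a factor $a^{-(2n+1)}$ to a fixed polynomial $\Lambda_{D'}=[z^2a^{-3},za^4]$, so this term contributes $z^3a^{-(2n+4)}$ --- two below the bottom degree $-(2n+2)$ of $\Lambda_{2n-2}$ --- while the top degree $a^5$ is supplied by $-\Lambda_{2n-2}$ and survives because its $a^5$-coefficient begins at $z^2$ whereas every other contribution at $a^5$ begins at $z^4$. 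Your remark that the capped-off link $B_n$ is ``essentially independent of $n$'' and hence harmless gets this backwards: the $n$-dependent power of $a$ carried by precisely that term is the engine of the breadth growth.

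Two further points. Proposition~\ref{prop:breadth+2} is the inequality $\breadth_a(F_L)+2\le\alpha(L)$, a lower bound on arc index; it cannot yield the upper bound $\breadth_a(F_{A_n}+F_{B_n})\le 2n+7$ you want, and even a correct breadth bound would not suffice without locating where the supports of these terms sit, since they are shifted by $n$-dependent curl and writhe factors. Also, the recursion should be run for the regular-isotopy invariant $\Lambda$ of unoriented diagrams, not for $F$: the two smoothings of an oriented crossing do not both inherit orientations, so a formula $F_{K_n}=a^{\pm2}F_{K_{n-1}}+a^{\pm1}z(F_{A_n}+F_{B_n})$ with a single writhe correction is not well posed. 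The paper instead establishes $\Lambda_{2n}=z\Lambda_{D_{2n-1}}+za^{-(2n+1)}\Lambda_{D'}-\Lambda_{2n-2}$, iterates to eliminate the odd-twist diagram $D_{2n-1}$ (which is not of the form $K_m$, so your recursion does not close up after one step), tracks the extreme $a$-terms together with their lowest $z$-degrees to rule out cancellation, and only at the end uses $\breadth_a(F_L)=\breadth_a(\Lambda_D)$.
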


\begin{proof}

Recall that the Kauffman polynomial of a link $L$ is defined by $F_L(a,z) = a^{- w(D)} $
$\Lambda_{D}(a,z)$. The polynomial ${\Lambda}_D$ is of the form 
\begin{center}
$\Lambda_{D}(a,z) = \sum\limits_{i=m}^n {f_i(z) a^i}$
\end{center}
where $m,n$ are integers with $m \le n$ and $f_i(z)$'s are polynomials in $z$ with integer coefficients such that $f_m(z) \neq 0$ and $f_n(z) \neq 0$.
We use the notation
\begin{center}
${\sum\limits_{i=m}^n {f_i(z) a^i} = [c_1 z^{h_m}a^m, c_2 z^{h_n}a^n]}$
\end{center}
where $z^{h_m}$, $z^{h_n}$ are the lowest degree terms in $f_m(z)$ and $f_n(z)$, respectively.
$c_1$, $c_2$ are non zero integer coefficients.

For example, we write
\begin{center}
$(z -3z^3 + z^5)a^{-1} + (-z^{-1} + 3z^3 )a^{2} + (4z^2)a^{3} = [z a^{-1} , 4z^2 a^3]$
\end{center}

Since the Kauffman polynomial is invariant under mutations, we only compute one of the two~\cite{3a}. For simplicity of exposition, we write $\Lambda_{2n}$ as the $\Lambda$-polynomial of $M(\frac{2}{3}, -\frac{2}{3}, \frac{2}{3}, \frac{1}{2n+2})$. Since ${\breadth}_a (F_{L})={\breadth}_a(\Lambda_{D}(a,z))$, it is enough to show that $\Lambda_{2n} = [z^{3} a^{-(2n+4)}, (-1)^n 2 z^2 a^{5}] $ for $n\ge1$. In the case of $n=1$, we obtain $\Lambda_{2} (a,z)$ as $[z^3 a^{-6}, -2 z^2 a^{5}]$. Here, we used the  KNOT program of Kodama~\cite{Kodama}. One can see the whole polynomial in the appendix.  Now we assume that the condition holds for $n-1$. For the uppermost crossing inside the twist box, consider the skein relation. Figure~\ref{fig8} shows $D_{+}$, $D_{-}$, $D_{=}$, $D_{\|}$ respectively.

\begin{figure}[th]
  \centering
		\includegraphics[height=3cm]{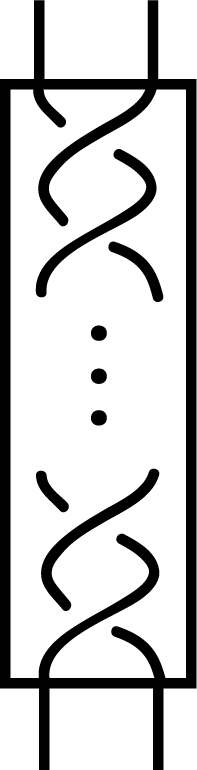}
\hspace{5mm}
		\includegraphics[height=3cm]{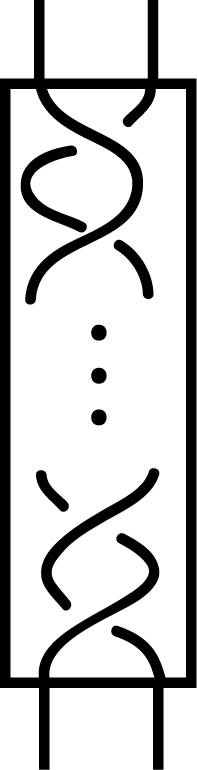}
\hspace{5mm}
		\includegraphics[height=3cm]{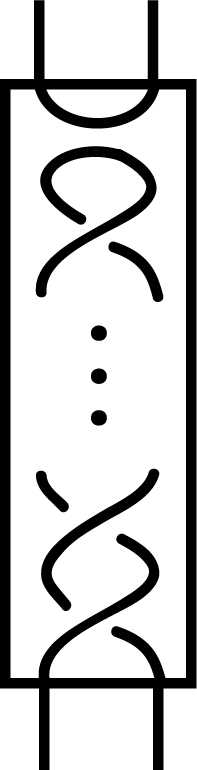}
\hspace{5mm}
		\includegraphics[height=3cm]{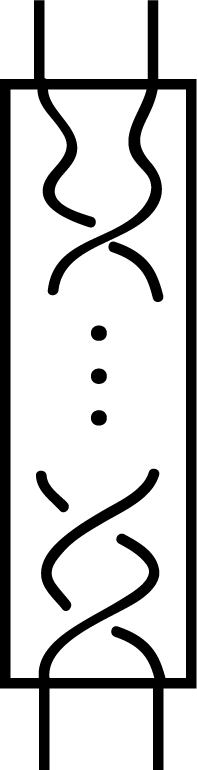}
  \caption{Skein relation in twist box}\label{fig8}
\end{figure}

If we write $\Lambda_{D_{+}}$ as $\Lambda_{2n}$ then $\Lambda_{D_{-}}$ is equal to $\Lambda_{2n-2}$
since $\Lambda (a,z)$ does not change by the second Reidemeister move. The diagram $D_{=}$ has a curl with $2n+1$ right-handed twist. After $2n+1$ times of the first Reidemeister moves, we obtain  $\Lambda_{D'}$ from $\Lambda_{D_=}$ where diagram $D'$ is shown in the Figure \ref{fig9}. For $\Lambda_{D_{\|}}$, we denote by $\Lambda_{D_{2n-1}}$. By the second property of $\Lambda(a,z)$, we have 
\begin{center}
${\Lambda}_{2n} = z {\Lambda}_{D_{2n-1}} + z a^{-(2n+1)} {\Lambda}_{D'} - {\Lambda}_{2n-2}$. 
\end{center}

From this recurrence relation, we can obtain the following equation where ${\Lambda}_{D_1}$ is shown in the Figure~\ref{fig9}.
\begin{align}
 {\Lambda}_{2n} &=  (-1)^{n+1} z {\Lambda}_{D_1} + (a^{-2n} - a^{-(2n-2)}+ \cdots + (-1)^na^{-4}) z^{2} {\Lambda}_{D'} \nonumber  \\
 &\qquad {} + ({\Lambda}_{2n-2} - {\Lambda}_{2n-4} + \cdots + (-1)^{n} {\Lambda}_{2}) z^{2} \nonumber \\
 &\qquad {} + za^{-(2n+1)}{\Lambda}_{D'} - {\Lambda}_{2n-2} \nonumber 
\end{align}

\begin{figure}[ht]
  \centering
		\includegraphics[height=2cm]{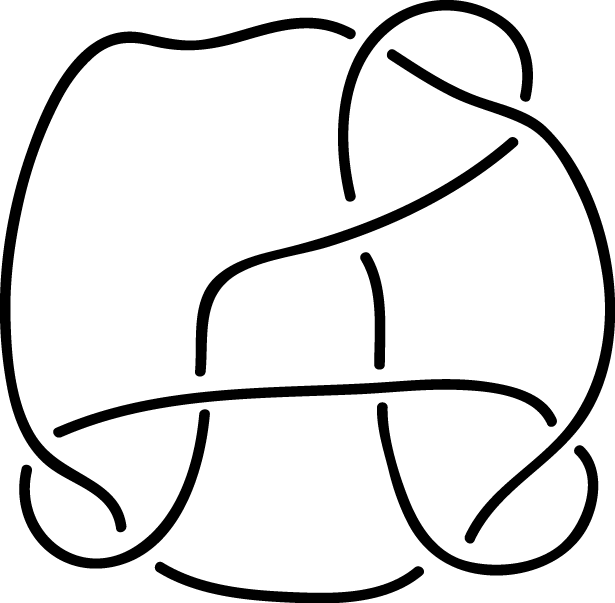} \hspace{10mm}
		\includegraphics[height=2cm]{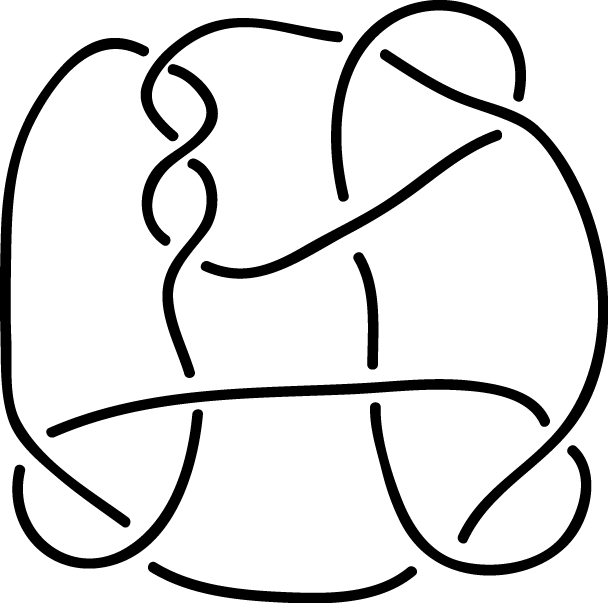}
  \caption{Diagrams $D'$ and $D_1$}\label{fig9}
\end{figure}

We found the highest and lowest degrees of $a$ in the five terms of the right hand side. ${\Lambda}_{D_1}$ and ${\Lambda}_{D'}$ can be calculated as ${\Lambda}_{D_1} = [z^3 a^{-5}, 4 z^3 a^{5}]$ and ${\Lambda}_{D'} = [z^2 a^{-3}, z a^{4}]$. Also by the assumption, ${\Lambda}_{2n-2} = [z^3 a^{-(2n+2)}, (-1)^{n-1}2 z^2 a^{5}]$. Therefore, we have

\begin{align*}
    (-1)^{n+1} z {\Lambda}_{D_1} &= [(-1)^{n+1} z^4 a^{-5}, (-1)^{n+1} 4 z^4 a^{5}]\\
  (a^{-2n} - a^{-(2n-2)}+ \cdots + (-1)^na^{-4}) z^{2} {\Lambda}_{D'} &= [z^4 a^{-(2n+3)}, z^3]\\
  ({\Lambda}_{2n-2} - {\Lambda}_{2n-4} + \cdots + (-1)^{n} {\Lambda}_{2}) z^{2} &= [z^5 a^{-(2n+2)}, (-1)^{n-1}2 (n-1) z^4 a^{5}]\\
  za^{-(2n+1)}{\Lambda}_{D'} &= [z^3 a^{-(2n+4)}, z^2 a^{-(2n-3)}] \\
  -{\Lambda}_{2n-2} &= [- z^3 a^{-(2n+2)}, (-1)^{n}2 z^2 a^{5}]
\end{align*}

The lowest term of $za^{-(2n+1)}{\Lambda}_{D'}$ and the highest term of $-{\Lambda}_{2n-2}$ together determine $\Lambda_{2n}$ as $\Lambda_{2n} = [z^{3} a^{-(2n+4)}, (-1)^n 2 z^2 a^{5}]$. This completes the proof.
\end{proof}

Statements similar to Lemma~\ref{lemma3_11} can be shown for Theorems~\ref{thm3_2}--\ref{thm3_7}, using the polynomials in the appendix.

\section{Knots in Theorems~\ref{thm3_1}--\ref{thm3_7} are non semi-alternating.}
Finally, we discuss that the knots in Theorems~\ref{thm3_1}--\ref{thm3_7} are non semi-alternating.
Jones polynomial is determined by the properties that it takes the value $1$ on a diagram of the unknot and satisfies the following skein relation:
$$t V_{L_{+}} - t^{-1}V_{L_{-}} = (t^{- \frac{1}{2}} - t^{\frac{1}{2}}) V_{L_{0}}$$
where $L_{+}$, $L_{-}$ and $L_{0}$ are three oriented link diagrams that are identical except in one small region where they differ by the crossing changes or smoothing. Lickorish and Thislethwaite found a method that can determine whether the given knot is non semi-alternating. 

\begin{proposition}[\cite{10}]\label{prop3_12}
Let $L$ be a link admitting semi-alternating diagram $D$ with $n$ crossing. Then the Jones polynomial $V_{L}(t)$ is non-alternating, its extreme coefficitents are $\pm 1$, and its breadth is $n-1$.
\end{proposition}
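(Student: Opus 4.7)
The plan is to pass to the Kauffman bracket polynomial $\langle D\rangle\in\mathbb{Z}[A^{\pm 1}]$, related to $V_L$ via $A = t^{-1/4}$ and $V_L(t) = (-A)^{-3w(D)}\langle D\rangle$. Under this substitution the three conclusions about $V_L$ translate to the statements that $\langle D\rangle$ has breadth $4(n-1)$ in $A$, has extremal coefficients $\pm 1$, and yields a non-alternating coefficient sequence in $V_L$ after the writhe shift. The task thus becomes to analyze $\langle D\rangle$ when $D = N(T_1 \ast T_2)$ is the nonalternating sum of two strongly alternating tangles $T_1$ and $T_2$ with $n_1 + n_2 = n$ crossings.

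First, I would expand each $\langle T_i\rangle$ in the two-dimensional Kauffman bracket tangle skein module as $\langle T_i\rangle = f_i(A)[0] + g_i(A)[\infty]$, where $[0]$ and $[\infty]$ are the two trivial $2$-string tangles. Since $T_i$ is strongly alternating, both closures $N(T_i)$ and $D(T_i)$ are irreducible alternating $n_i$-crossing diagrams, so by the Kauffman-Murasugi-Thistlethwaite theorem their Kauffman brackets have breadth $4 n_i$ with extremal coefficients $\pm 1$. Inverting the linear system
$$\langle N(T_i)\rangle = -(A^2 + A^{-2})f_i + g_i, \qquad \langle D(T_i)\rangle = f_i - (A^2 + A^{-2})g_i$$
then pins down the top- and bottom-degree terms of $f_i$ and $g_i$. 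Thistlethwaite's spanning-tree expansion of $\langle T_i\rangle$ over the Tait graph of $T_i$ gives a clean combinatorial description of these extremal terms and confirms their $\pm 1$ coefficients.

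Second, I would compute $\langle D\rangle$ as a $\mathbb{Z}[A^{\pm 1}]$-bilinear expression in $(f_1,g_1,f_2,g_2)$ determined by the closure operation at the nonalternating junction. In an ordinary alternating sum the dominant products of extremal monomials combine constructively to give breadth $4n$ with $\pm 1$ extremes, reproducing the alternating case; in the \emph{nonalternating} sum the same extremal products appear with opposite signs and cancel exactly at the top and bottom monomials, leaving breadth $4(n-1)$. The surviving extremes then arise from products of one leading with one subleading monomial and, by direct inspection, are still $\pm 1$. For non-alternation of $V_L(t)$, one compares adjacent coefficients in the resulting state sum and observes that the sign flip at the nonalternating junction disrupts the uniform sign pattern that $\langle D\rangle$ would have inherited from the all-$A$ and all-$B$ state sums in the purely alternating case.

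The main obstacle is the precise sign and leading-coefficient bookkeeping at the tangle junction: one must verify that the cancellation of the two extremal monomials is exact and drops the $A$-breadth by exactly $4$, not more, and that the surviving new extremes really are $\pm 1$ rather than larger integers. The cleanest route is via Thistlethwaite's identification of $\langle D\rangle$ with a signed Tutte polynomial of the Tait graph of $D$; here the Tait graph is built from the connected Tait graphs of $T_1$ and $T_2$ by a two-vertex gluing with a sign twist encoding the nonalternating junction, so the extremal Tutte coefficients of the full graph can be extracted from those of its two components.
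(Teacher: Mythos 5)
The paper does not prove this proposition at all: it is quoted verbatim from Lickorish--Thistlethwaite \cite{10}, so the only meaningful comparison is with the argument in that source. Your general framework --- pass to the Kauffman bracket, expand each strongly alternating tangle as $f_i[0]+g_i[\infty]$, and analyse the bilinear closure formula $\langle N(T_1+T_2)\rangle=\delta f_1f_2+f_1g_2+g_1f_2+\delta g_1g_2$ with $\delta=-A^2-A^{-2}$ --- is indeed the framework of \cite{10}. But the mechanism you propose for the breadth drop is not the right one, and it is precisely the point you flag as ``the main obstacle.'' There is no cancellation of extremal monomials. The actual argument is that a semi-alternating diagram is \emph{adequate}: the all-$A$ state is the unique state realizing the top degree $n+2(|s_+D|-1)$ of the bracket, so the top coefficient is $(-1)^{|s_+D|-1}=\pm1$ with nothing to cancel against, and likewise at the bottom. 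The breadth drops because the nonalternating junction makes the all-$A$ and all-$B$ state loops of the two tangles merge differently, giving $|s_+D|+|s_-D|=n$ instead of the value $n+2$ attained by a reduced alternating diagram, whence the span is $2n+2(|s_+D|+|s_-D|)-4=4n-4$ rather than $4n$. In the bilinear form this appears as a \emph{unique} dominating product among the four terms (controlled by Lickorish--Thistlethwaite's degree estimates for strongly alternating tangles), not as two products ``combining constructively'' in the alternating case or ``cancelling exactly'' in the nonalternating one; either of those pictures would generically destroy the $\pm1$ conclusion you are trying to reach.

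The second gap is the non-alternation of $V_L$, which you dispose of in one sentence about a ``disrupted sign pattern.'' This is where the loop count is used a second time: up to a common sign coming from the writhe normalization, the two extreme coefficients of $V_L$ are $(-1)^{|s_+D|-1}$ and $(-1)^{|s_-D|-1}$, so their ratio is $(-1)^{|s_+D|+|s_-D|}=(-1)^n$, whereas an alternating polynomial of breadth $n-1$ must have extreme coefficients whose ratio is $(-1)^{n-1}$. That parity clash is the entire proof of non-alternation, and nothing in your sketch produces it; comparing ``adjacent coefficients'' near the junction is neither necessary nor, without the extremal sign computation, sufficient.
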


$V_{L}(t)$ is of the form
\begin{center}
$V_{L}(t) = \sum\limits_{i=m}^n {c_i t^i}$
\end{center}
Similar to the Kauffman polynomial, we use the notation
$$V_{L}(t) = [c_m t^m, c_n t^n]$$
where $c_m$, $c_n$ are non zero coefficients. The difference $n-m$ is called the \emph{breadth} of $V_L$. For the uppermost crossing inside the twist box, there are two types of skein relation as shown in the Figure~\ref{fig10}.
\begin{figure}[!ht]
  \centering
		\includegraphics[height=3cm]{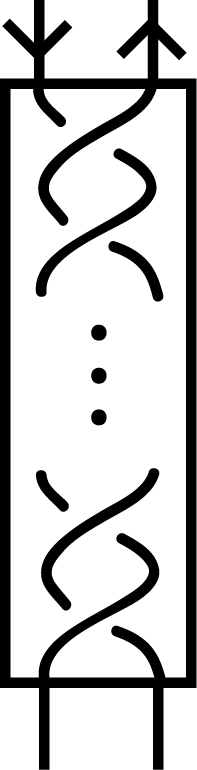}
\hspace{5mm}
		\includegraphics[height=3cm]{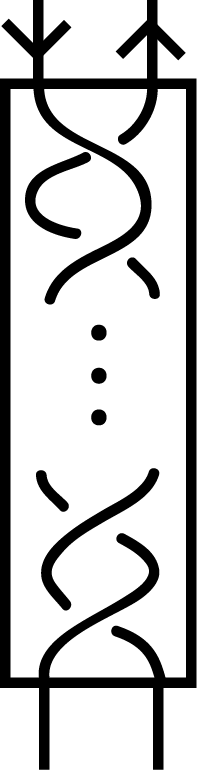}
\hspace{5mm}
		\includegraphics[height=3cm]{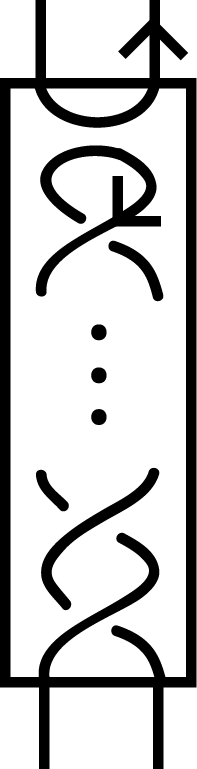}
\hspace{20mm}
		\includegraphics[height=3cm]{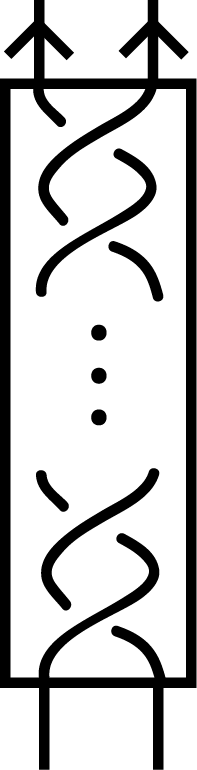}
\hspace{5mm}
		\includegraphics[height=3cm]{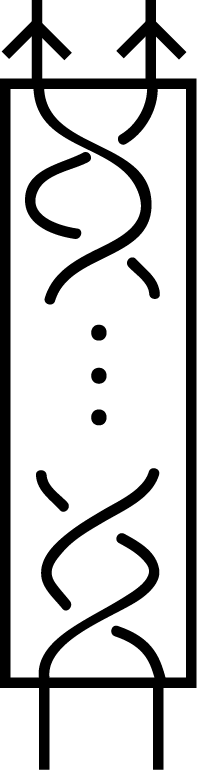}
\hspace{5mm}
		\includegraphics[height=3cm]{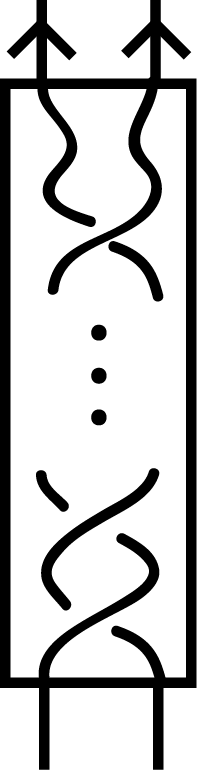}

Type1 \hspace{50mm}  Type2

  \caption{Two types of oriented skein relations in a twist box}\label{fig10}
\end{figure}

To show that the knots in  Theorem \ref{thm3_1} are 
non semi-alternating, we prove Lemma~\ref{lemma3_13}. As the Jones polynomial is invariant under mutations, we only need to handle one of the pair.

\begin{lemma}\label{lemma3_13}
The breadth of $V_{L}(t)$  is equal to 
$2n+9$ where $ L = M(\frac{2}{3}, -\frac{2}{3}, \frac{2}{3}, \frac{1}{2n+2})$.
\end{lemma}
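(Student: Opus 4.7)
The plan is to mirror Lemma~\ref{lemma3_11}, replacing the Kauffman polynomial with the Jones polynomial and the four-term Kauffman skein with the three-term Jones skein $tV_{L_+} - t^{-1}V_{L_-} = (t^{-1/2}-t^{1/2})V_{L_0}$. Write $V_{2n}$ for the Jones polynomial of $M(\frac{2}{3},-\frac{2}{3},\frac{2}{3},\frac{1}{2n+2})$ and use the bracket notation $V_{2n} = [c_m t^{m(n)}, c_M t^{M(n)}]$ for its lowest and highest monomials. I will prove by induction on $n\ge 1$ that $M(n) - m(n) = 2n+9$, recording the leading and trailing coefficients explicitly in order to rule out cancellations at the end. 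The base case is handled by a direct computation with the KNOT program of Kodama~\cite{Kodama}, exactly as in Lemma~\ref{lemma3_11}.

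For the inductive step, resolve the uppermost crossing of the $\frac{1}{2n+2}$ twist box. Then $V(L_+) = V_{2n}$; the diagram $L_-$ differs from the $(n-1)$ instance by a Reidemeister~II cancellation, so $V(L_-) = V_{2n-2}$; and the oriented smoothing $L_0$, depending on which of Type~1 or Type~2 of Figure~\ref{fig10} applies to the two strands entering the box, reduces after Reidemeister~I moves either to a fixed subdiagram $D'$ independent of $n$ or to a single twist-box whose Jones polynomial is computable in closed form in $n$. Solving the skein for $V_{2n}$ yields a recurrence of the shape
\[
V_{2n} \;=\; t^{-2} V_{2n-2} \;+\; (t^{-3/2} - t^{-1/2})\,V(L_0),
\]
which telescopes down to $V_2$ from the base case plus a geometric sum in $t^{-2}$ multiplied by the polynomial of the smoothed diagram. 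One explicit KNOT computation of $V(L_0)$ then pins down all of the coefficients that appear in the telescoped expression.

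The main obstacle, as in Lemma~\ref{lemma3_11}, is to verify that exactly one summand on the right-hand side supplies the highest $t$-monomial of $V_{2n}$ and exactly one supplies the lowest, with no cancellation against contributions of the same $t$-degree. This reduces to a sign-and-coefficient bookkeeping check, precisely the Jones-polynomial analogue of the step that isolates the term $(-1)^n 2 z^2 a^5$ at the end of Lemma~\ref{lemma3_11}. Once $\breadth V_L = 2n+9$ is established, the conclusion is immediate: if $L$ admitted a semi-alternating diagram with $N$ crossings, Proposition~\ref{prop3_12} would force $N = 2n+10$, contradicting the fact that $N \ge c(L) = 2n+11$ by Theorem~\ref{thm3_1}. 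Hence $L$ is non semi-alternating, and the analogous statements for Theorems~\ref{thm3_2}--\ref{thm3_7} follow by the same scheme using the polynomial data in the appendix.
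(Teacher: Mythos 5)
Your proposal matches the paper's proof in all essentials: induction on $n$ with the base case from an explicit KNOT computation, the Jones skein relation applied at the uppermost crossing of the twist box, and a check that the extreme $t$-degrees of the two (or three) summands do not collide. For this particular family the Type~1 relation applies, so the smoothing is a fixed diagram $D'$ independent of $n$ and the recurrence $V_{2n}=t^{2}V_{2n-2}-(t^{1/2}-t^{3/2})V_{D'}$ needs no telescoping; otherwise your hedged description covers exactly what the paper does.
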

\begin{proof}
In the case $n=0$, $L=11n71^*$ (See Table 1.) and 
$$V_L(t)=-2+5t   -7t^2   +11t^3  -10t^4   +10t^5   -9t^6    +5t^7   -3t^8    +t^9$$
having breadth 9.
For $n\ge1$, we will show that $V_{L}(t) = [- t^{0} , t^{2n+9}]$ by an induction on $n$. To use a recurrence formula, $V_{L}(t)$ is denoted by $V_{2n}(t)$. If $n=1$, we have  $V_{2}(t) = [- t^{0} , t^{11}]$. For the whole polynomial, see appendix. Now assume that $n>1$ and the condition holds for $n-1$. Since Jones polynomial is not affected by the Reidemeiter moves, we obtain the following skein relation:
$$V_{2n}(t) =  t^{2}V_{2n-2}(t) - (t^{\frac{1}{2}} - t^{ \frac{3}{2}}) V_{D'}(t)$$
where $D'$ is same as Figure \ref{fig9}, but has orientation. Here we used the type 1 skein relation.

By the assumption,  $V_{2n-2}(t) = [- t^{0} , t^{2n+7}]$ and $V_{D'}(t) = [ t^{- \frac{1}{2}}, - t^{\frac{13}{2}}]$. Thus, $t^{2}V_{2n-2}(t) = [- t^{2} , t^{2n+9}]$ and $- (t^{\frac{1}{2}} - t^{\frac{3}{2}}) V_{D'}(t) = [- t^{0} , t^{8}]$. Therefore we have $V_{2n}(t) = [- t^{0} , t^{2n+9}]$.
\end{proof}

A statement similar to Lemma~\ref{lemma3_13} can be given to show that the knots in Theorem~\ref{thm3_3} are non semi-alternating.
To show that the knots in Theorem~\ref{thm3_2} are non semi-alternating, we use the next lemma.

\begin{lemma}\label{lemma3_14}
The breadth of $V_{L}(t)$  is equal to 
$2n+9$ where $ L = M(\frac{1}{2}, \frac{2}{3}, -\frac{2}{3}, \frac1{2n+3})$.
\end{lemma}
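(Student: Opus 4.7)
The plan is to follow the template of Lemma~\ref{lemma3_13} almost verbatim, replacing the family $M(\frac{2}{3}, -\frac{2}{3}, \frac{2}{3}, \frac{1}{2n+2})$ with $M(\frac{1}{2}, \frac{2}{3}, -\frac{2}{3}, \frac{1}{2n+3})$. I will write $V_{2n+1}(t)$ for the Jones polynomial of this link, indexed so that $n=0$ corresponds to the knot $11n76^*$ of Table~1, and proceed by induction on $n\ge 0$. For the base case I verify $V_0(t)$ and $V_1(t)$ by direct computation using Kodama's KNOT program (as catalogued in the appendix), obtaining an explicit extreme-term expression of the form $V_1(t)=[c_0 t^{m_0}, c_1 t^{m_0+11}]$, establishing breadth $11 = 2(1)+9$ at $n=1$ and, directly from $11n76^*$, breadth $9$ at $n=0$.

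For the inductive step, I apply the Type~1 skein relation (Figure~\ref{fig10}) at the uppermost crossing inside the $\frac{1}{2n+3}$ twist box. Exactly as in Lemma~\ref{lemma3_13}, the crossing-change term simplifies by a Reidemeister~II move to a diagram representing $V_{2n-1}(t)$, while the smoothing produces a diagram $D'$ whose Jones polynomial is independent of $n$ and is computed once. Rearranging $tV_+ - t^{-1}V_- = (t^{-1/2}-t^{1/2})V_0$ yields
\begin{equation*}
V_{2n+1}(t) = t^2 V_{2n-1}(t) - (t^{1/2}-t^{3/2})V_{D'}(t),
\end{equation*}
a recurrence of identical shape to the one in Lemma~\ref{lemma3_13}. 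Substituting the inductive form $V_{2n-1}(t) = [c_0 t^{m_0}, c_1 t^{m_0+2n+7}]$, the product $t^2 V_{2n-1}(t)$ contributes the highest-degree term $c_1 t^{m_0+2n+9}$, while the lowest-degree term of $-(t^{1/2}-t^{3/2})V_{D'}(t)$ supplies the surviving lowest term $c_0 t^{m_0}$; summing gives $V_{2n+1}(t) = [c_0 t^{m_0}, c_1 t^{m_0+2n+9}]$, hence breadth $2n+9$.

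The main obstacle is the extreme-term bookkeeping. I must compute $V_{D'}(t)$ explicitly and verify two facts: first, that the highest-degree term of $-(t^{1/2}-t^{3/2})V_{D'}(t)$ has degree strictly less than $m_0+2n+9$ for all $n\ge 1$, so the top term of $t^2 V_{2n-1}(t)$ survives; second, that the lowest-degree term of $-(t^{1/2}-t^{3/2})V_{D'}(t)$ is a nonzero constant multiple of $t^{m_0}$ and lies strictly below the lowest-degree term $c_0 t^{m_0+2}$ of $t^2 V_{2n-1}(t)$, so no cancellation destroys the bottom extreme. A secondary but necessary check is that Type~1 rather than Type~2 of Figure~\ref{fig10} is the correct skein relation given the orientation inherited from $L$, so that the Reidemeister~II move and the identification of $D'$ behave as in Figure~\ref{fig9}. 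Once these verifications are in place, the argument is a line-by-line adaptation of Lemma~\ref{lemma3_13}.
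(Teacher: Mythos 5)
There is a genuine gap, and it is precisely at the point you dismiss as a ``secondary but necessary check.'' For the family $M(\frac{1}{2}, \frac{2}{3}, -\frac{2}{3}, \frac{1}{2n+3})$ the orientation that $L$ induces on the two strands passing through the $\frac{1}{2n+3}$ twist box makes them parallel, so the applicable relation is Type~2 of Figure~\ref{fig10}, not Type~1. The oriented smoothing therefore does \emph{not} produce a curl removable by Reidemeister~I moves yielding a fixed diagram $D'$; it produces the diagram $D_{2n-1}$ with one fewer crossing in the box, which still depends on $n$. The correct one-step relation is
\begin{equation*}
V_{2n}(t) = t^{-2}V_{2n-2}(t) + (t^{-3/2}-t^{-1/2})\,V_{D_{2n-1}}(t),
\end{equation*}
and since the second summand is itself an unknown that varies with $n$, the paper must unroll the recurrence into a sum $\bigl(t^{-3/2}-t^{-1/2}\bigr)^{2}\sum_{i=1}^{n-1}t^{-2n+2i+2}V_{2i}(t)$ plus a boundary term in $V_{1}$, and then track the extreme degrees of all three resulting pieces. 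Your proposed single-step induction with an $n$-independent $V_{D'}$ cannot be carried out because no such $D'$ exists here.

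The symptom of this structural error is visible in the shape of the answer you posit. You assume $V(t)=[c_{0}t^{m_{0}},\,c_{1}t^{m_{0}+2n+9}]$ with the bottom extreme \emph{fixed} and the top growing by $2$ per step, as in Lemma~\ref{lemma3_13}. The actual extreme terms for this family are $V_{2n}(t)=[t^{-3n-8},\,-t^{-n+1}]$: the lowest degree drops by $3$ per step and the highest degree drops by $1$ per step, so \emph{both} extremes move with $n$ (their difference is still $2n+9$). Moreover, in the paper's computation both surviving extremes come from the middle summation term, not one from each of two terms as in your bookkeeping. So the inductive hypothesis you would carry is false, the recurrence you would use is the wrong one, and the cancellation analysis would not close. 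The base cases and the appeal to mutation invariance of the Jones polynomial are fine; the inductive engine needs to be rebuilt along the Type~2 lines above.
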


\begin{proof}
In the case $n=0$, $L=11n76^*$ (See Table 1.) and 
$$V_L(t)=
t^{-8}   -4t^{-7}  + 5t^{-6}   -7t^{-5}  + 8t^{-4}   -6t^{-3}  + 7t^{-2}   -4t^{-1}  + 2   -t
$$
having breadth 9.
For $n\ge1$, we will show that $V_{L}(t) = [t^{-3n-8} , - t^{-n+1}]$ by an induction on $n$.
To use a recurrence formula, $V_{L}(t)$ is denoted by $V_{2n}(t)$. When $n=1$, we have $V_2 (t) = [t^{-11} , -t^0]$. For the whole polynomial, see appendix. Now assume that $n>1$ and the condition holds for $n-1$. For the uppermost crossing inside the twist box, we obtain a skein relation as follows: 
$$V_{2n}(t) =  t^{-2}V_{2n-2}(t) + (t^{- \frac{3}{2}} - t^{- \frac{1}{2}}) V_{D_{2n-1}}(t)$$
Here we used the type 2 skein relation. From this recurrence relation, we have the following equation:
$$V_{2n}(t) =  t^{-2}V_{2n-2}(t) + (t^{- \frac{3}{2}} - t^{- \frac{1}{2}})^{2} \sum\limits_{i=1}^{n-1}{t^{-2n+2i+2}V_{2i}(t)} +(t^{- \frac{3}{2}} - t^{- \frac{1}{2}}) t^{-2n+2} V_{1}(t)$$
By the assumption,  $V_{2n-2}(t) = [t^{-3n-5} , - t^{-n+2}]$ and $V_{1}(t) = [t^{- \frac{19}{2}}, t^{\frac{1}{2}}]$. So we have

\begin{align*}
 t^{-2}V_{2n-2}(t) &= [ t^{-3n-7} , - t^n ]\\
(t^{- \frac{3}{2}} - t^{- \frac{1}{2}})^{2} \sum\limits_{i=1}^{n-1}{t^{-2n+2i+2}V_{2i}(t)} &= [t^{-3n-8} , - t^{-n+1}]\\
 (t^{- \frac{3}{2}} - t^{- \frac{1}{2}})  t^{-2n+2}V_{D_1}(t) &= [t^{-2n-9} , - t^{-2n+2}].
\end{align*}
Thus $V_{2n}(t) = [t^{-3n-8} , - t^{-n+1}]$. This completes the proof.
\end{proof}

Statements similar to Lemma~\ref{lemma3_14} can be given to show that the knots in Theorems~\ref{thm3_4}--\ref{thm3_7} are non semi-alternating.

\clearpage
\section*{Appendix}

\subsection*{Polynomials to prove Theorem \ref{thm3_1}}
$$
\begin{aligned}
\Lambda_{2n} & = [z^3 a^{-2n+4}, (-1)^{n} 2 z^2 a^5]\\
\Lambda_{2} & = (z^8 - 5z^6 + 7z^4 - 2z^2)a^{5}+ (3z^9 - 16z^7 + 28z^5 - 19z^3 + 4z)a^{4} \\
 &\quad   + (3z^{10} - 12z^8 + 11z^6 - z^4 + 3z^2 - 2)a^{3}\\
&\quad + (z^{11} + 5z^9 - 42z^7 + 81z^5 - 66z^3 + 19z)a^{2}  \\
 &\quad  + (7z^{10} - 29z^8 + 42z^6 - 43z^4 + 32z^2 - 9)a^{1}\\
&\quad + (z^{11} + 7z^9 - 44z^7 + 79z^5 - 71z^3 + 23z)a^{0}  \\
 &\quad  + (4z^{10} - 13z^8 + 22z^6 - 38z^4 + 35z^2 - 11)a^{-1}\\
&\quad + (5z^9 - 17z^7 + 29z^5 - 28z^3 + 10z)a^{-2} + (3z^8 - 4z^6 + z^4 + 6z^2 - 4)a^{-3}\\
&\quad + (z^7 + 3z^5 - 3z^3 + 2z)a^{-4} + (4z^4 - 2z^2 - 1)a^{-5}  + z^3a^{-6} \\
\Lambda_{1} &=  (z^7 - 4z^5 + 4z^3)a^{5}\\
&\quad + (3z^8 - 13z^6 + 18z^4 - 9z^2)a^{4}  + (3z^9 - 9z^7 + 5z^5 - 2z^3 + 4z)a^{3} \\
&\quad + (z^{10} + 6z^8 - 35z^6 + 50z^4 - 30z^2 + 2)a^{2} \\
&\quad + (7z^9 - 22z^7 + 23z^5 - 23z^3 + 16z)a^{1}\\
&\quad + (z^{10} + 8z^8 - 38z^6 + 55z^4 - 34z^2 + 2)a^{0} \\
&\quad  + (4z^9 - 10z^7 + 14z^5 - 19z^3 + 16z)a^{-1} + (5z^8 - 16z^6 + 28z^4 - 16z^2 - 1)a^{-2}\\
&\quad + (2z^7 - z^3 + 4z)a^{-3} + (5z^4 - 3z^2 - 2)a^{-4}  + z^3a^{-5}\\
\Lambda_{D'} &=  (z^5 - 3z^3 + z)a^{4}+ (2z^6 - 7z^4 + 5z^2)a^{3} + (z^7 - 2z^5 - z)a^{2} \\
&\quad+ (3z^6 - 9z^4 + 7z^2)a^{1}  + (6z^3 - 1/z - 4z - 3z^5 + z^7)a^{0} \\
&\quad + (z^6 - 2z^4 + 3z^2 + 1)a^{-1}  + (3z^3 - 1/z - 2z)a^{-2} + z^2a^{-3}\\
V_{2n}(t) &= [- t^0 , t^{2n+9}]\\
V_{2}(t) &= -1+4t-7t^{2}+11t^{3}-14t^{4}+16t^{5}-14t^{6}+13t^{7}-10t^{8}+5t^{9}-3t^{10}+t^{11}\\
V_{D'}(t) &= t^{-1/2}-3t^{1/2}+2t^{3/2}-4t^{5/2}+3t^{7/2}-2t^{9/2}+2t^{11/2}-t^{13/2}
\end{aligned}
$$

\begin{figure}[h!]
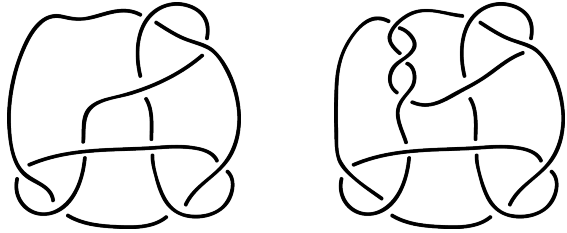

  \centering
		\includegraphics[height=3cm]{11n71_11n75_Dp.png} \hspace{10mm}
		\includegraphics[height=3cm]{11n71_11n75_D1.png}
  \caption{Diagrams $D'$ and $D_1$ for Theorem \ref{thm3_1}}
\end{figure}
\clearpage


\subsection*{Polynomials to prove Theorem \ref{thm3_2}.}

$$
\begin{aligned}
\Lambda_{2n} & = [z^2 a^{-(2n+5)} , 6 z a^4]\\
\Lambda_{2} &=  (z^9 - 7z^7 + 17z^5 - 17z^3 + 6z)a^{4}  + (2z^{10} - 13z^8 + 29z^6 - 29z^4 + 17z^2 - 6)a^{3} \\
&\quad + (z^{11} - z^9 - 21z^7 + 63z^5 - 61z^3 + 21z)a^{2} \\
&\quad+ (6z^{10} - 36z^8 + 75z^6 - 79z^4 + 56z^2 - 20)a^{1} \\
&\quad + (z^{11} + 3z^9 - 40z^7 + 93z^5 - 89z^3 + 34z)a^{0} \\
&\quad+ (4z^{10} - 20z^8 + 38z^6 - 51z^4 + 44z^2 - 17)a^{-1} \\
&\quad + (5z^9 - 24z^7 + 44z^5 - 48z^3 + 20z)a^{-2} + (3z^8 - 7z^6 - z^4 + 5z^2 - 4)a^{-3}\\
&\quad + (2z^7 - 3z^5)a^{-4} + (z^6 + z^2)a^{-5} + (3z^3 - z)a^{-6}  + z^2a^{-7}\\
\Lambda_{1} &= (z^8 - 6z^6 + 12z^4 - 9z^2 + 2)a^{4} \\
&\quad+ (2z^9 - 11z^7 + 20z^5 - 16z^3 + 8z - 2z^{-1})a^{3} \\
&\quad + (z^{10} - 20z^6 + 45z^4 - 33z^2 + 8)a^{2} \\
&\quad+ (6z^9 - 30z^7 + 51z^5 - 48z^3 + 30z - 7z^{-1})a^{1} \\
&\quad + (z^{10} + 4z^8 - 35z^6 + 62z^4 - 47z^2 + 13)a^{0} \\
&\quad+ (4z^9 - 16z^7 + 24z^5 - 31z^3 + 24z - 7z^{-1})a^{-1} \\
&\quad + (5z^8 - 20z^6 + 30z^4 - 26z^2 + 8)a^{-2}+ (3z^7 - 7z^5 + 4z^3 + z - 2z^{-1})a^{-3} \\
&\quad + (z^6 + z^4 - 2z^2 + 2)a^{-4} + (3z^3 - z^1)a^{-5}  + z^2a^{-6}\\
\Lambda_{D'} &= (z^5 - 4z^3 + 3z)a^{3}  + (z^6 - 4z^4 + 4z^2 - 2)a^{2} + (2z^5 - 7z^3 + 5z)a^{1} \\
&\quad+ (z^6 - 4z^4 + 6z^2 - 4)a^{0}+ (z^5 - 3z^3 + 3z)a^{-1}+ (2z^2 - 1)a^{-2}  + za^{-3}\\
V_{2n}(t) &= [t^{-(3n+8)} , - t^{-n+1}]\\
V_{2}(t) &= t^{-11}-3t^{-10}+4t^{-9}-7t^{-8}+8t^{-7}-9t^{-6}+10t^{-5}-7t^{-4}+7t^{-3}-4t^{-2} \\
&\quad+2t^{-1}-1\\
V_{D_1}(t) &= t^{-19/2}-3t^{-17/2}+5t^{-15/2}-7t^{-13/2}+8t^{-11/2}-9t^{-9/2}+7t^{-7/2}-7t^{-5/2} \\
&\quad+4t^{-3/2}-2t^{-1/2}+t^{1/2}
\end{aligned}
$$
\begin{figure}[h!]
  \centering
		\includegraphics[height=3cm]{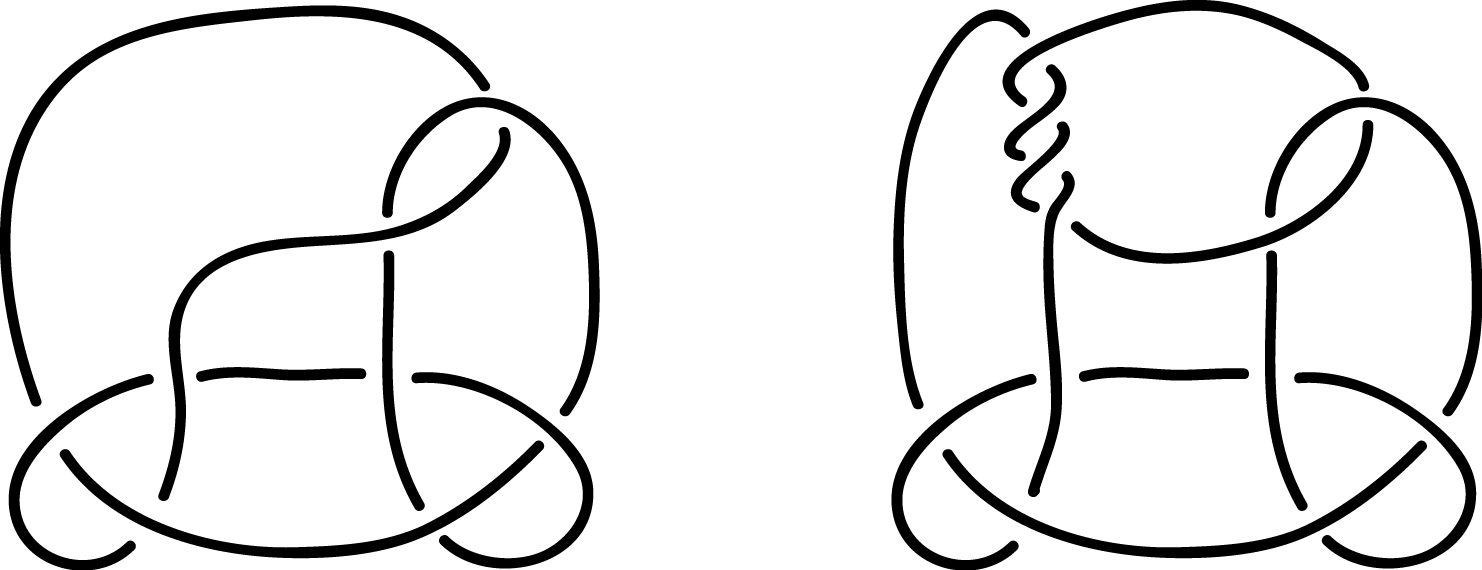}
  \caption{Diagrams $D'$ and $D_1$ for Theorem \ref{thm3_2}}
\end{figure}

\clearpage

\subsection*{Polynomials to prove Theorem \ref{thm3_3}.}

$$
\begin{aligned}
\Lambda_{2n} & = [z^3 a^{-(2n+5)} , -6 z^3 a^5]\\
\Lambda_{2} &= (z^9 - 6z^7 + 11z^5 - 6z^3)a^{5}+ (3z^{10} - 19z^8 + 41z^6 - 37z^4 + 13z^2)a^{4} \\ 
&\quad + (3z^{11} - 15z^9 + 20z^7 - 6z^5 + 5z^3 - 6z)a^{3} \\
&\quad+ (z^{12} + 4z^{10} - 48z^8 + 116z^6 - 116z^4 + 49z^2 - 2)a^{2} \\
&\quad + (7z^{11} - 36z^9 + 64z^7 - 66z^5 + 55z^3 - 25z)a^{1} \\
&\quad+ (z^{12} + 6z^{10} - 52z^8 + 118z^6 - 129z^4 + 58z^2 - 2)a^{0} \\
&\quad + (4z^{11} - 17z^9 + 34z^7 - 59z^5 + 59z^3 - 27z)a^{-1} \\
&\quad+ (5z^{10} - 21z^8 + 43z^6 - 56z^4 + 25z^2 + 1)a^{-2}  \\
&\quad+ (3z^9 - 3z^7 - 8z^5 + 14z^3 - 8z)a^{-3}+ (2z^8 - 2z^4 + z^2 + 1)a^{-4} \\
&\quad + (z^7 + 2z^5)a^{-5} + (4z^4 - 2z^2 - 1)a^{-6} + z^3a^{-7}\\
\Lambda_{1} &= (z^8 - 5z^6 + 7z^4 - 2z^2)a^{5}+ (3z^9 - 16z^7 + 28z^5 - 19z^3 + 4z)a^{4} \\
&\quad + (3z^{10} - 12z^8 + 11z^6 - z^4 + 3z^2 - 2)a^{3} \\
&\quad+ (z^{11} + 5z^9 - 42z^7 + 81z^5 - 66z^3 + 19z)a^{2} \\
&\quad + (7z^{10} - 29z^8 + 42z^6 - 43z^4 + 32z^2 - 9)a^{1} \\
&\quad+ (z^{11} + 7z^9 - 44z^7 + 79z^5 - 71z^3 + 23z)a^{0} \\
&\quad + (4z^{10} - 13z^8 + 22z^6 - 38z^4 + 35z^2 - 11)a^{-1} \\
&\quad+ (5z^9 - 17z^7 + 29z^5 - 28z^3 + 10z)a^{-2} + (3z^8 - 4z^6 + z^4 + 6z^2 - 4)a^{-3} \\
&\quad+ (z^7 + 3z^5 - 3z^3 + 2z)a^{-4} + (4z^4 - 2z^2 - 1)a^{-5}  + z^3a^{-6}\\
\Lambda_{D'} &= (z^5 - 3z^3 + z)a^{4}  + (2z^6 - 7z^4 + 5z^2)a^{3} + (z^7 - 2z^5 - z)a^{2} \\
&\quad+ (3z^6 - 9z^4 + 7z^2)a^{1} + (z^7 - 3z^5 + 6z^3 - 4z - 1z^{-1})a^{0} \\
&\quad+ (z^6 - 2z^4 + 3z^2 + 1)a^{-1} + (3z^3 - 2z - 1z^{-1})a^{-2}  + z^2a^{-3}\\
V_{2n}(t) &= [- t^{-3} , - t^{2n+7}]\\
V_{2}(t) &= -t^{-3}+4t^{-2}-6t^{-1}+11-14t+16t^{2}-18t^{3}+15t^{4}-13t^{5}+10t^{6}-5t^{7} \\
&\quad  +3t^{8}-t^{9}\\
V_{D'}(t) &= t^{-7/2}-3t^{-5/2}+2t^{-3/2}-4t^{-1/2}+3t^{1/2}-2t^{3/2}+2t^{5/2}-t^{7/2}
\end{aligned}
$$

\begin{figure}[h!]
  \centering
		\includegraphics[height=3cm]{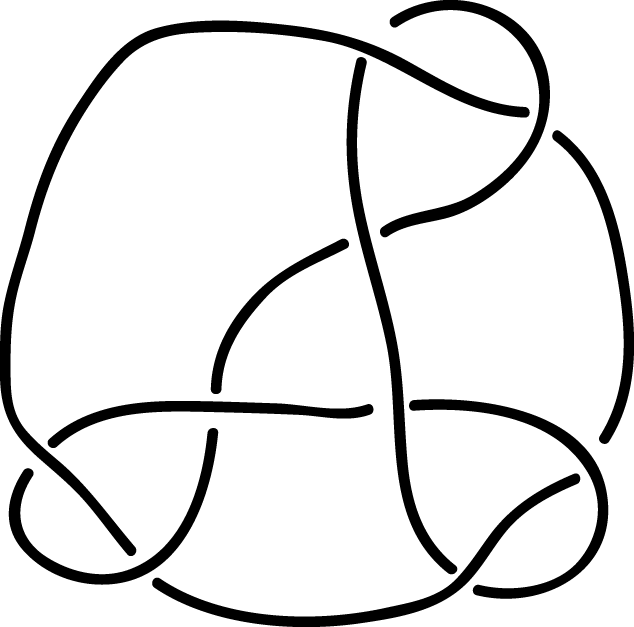} \hspace{10mm}
		\includegraphics[height=3cm]{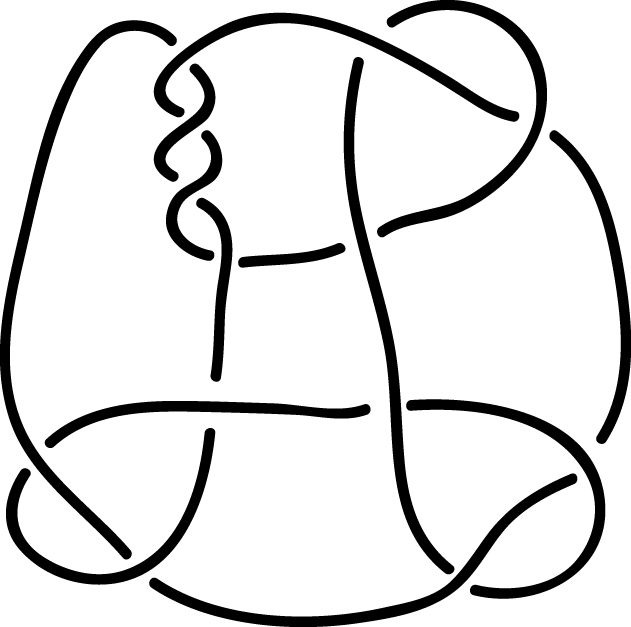}
  \caption{Diagrams $D'$ and $D_1$ for Theorem \ref{thm3_3}}
\end{figure}

\clearpage

\subsection*{Polynomials to prove Theorem \ref{thm3_4}.}

$$
\begin{aligned}
\Lambda_{2n} &= [- z^2 a^{-(2n+4)} , -z^2 a^6] \\
\Lambda_{2} &= (z^8 - 4z^6 + 4z^4 - z^2)a^{6}+ (4z^9 - 18z^7 + 24z^5 - 11z^3 + 2z)a^{5} \\
&\quad + (6z^{10} - 25z^8 + 30z^6 - 14z^4 + 5z^2 - 1)a^{4} \\
&\quad+ (4z^{11} - 6z^9 - 28z^7 + 61z^5 - 38z^3 + 7z)a^{3} \\
&\quad + (z^{12} + 12z^{10} - 67z^8 + 112z^6 - 91z^4 + 34z^2 - 5)a^{2} \\
&\quad+ (8z^{11} - 22z^9 + 3z^7 + 28z^5 - 28z^3 + 7z)a^{1} \\
&\quad + (z^{12} + 11z^{10} - 58z^8 + 110z^6 - 112z^4 + 53z^2 - 9)a^{0} \\
&\quad+ (4z^{11} - 9z^9 + 9z^7 - 7z^5 + 3z^3 + z)a^{-1} \\
&\quad + (5z^{10} - 16z^8 + 35z^6 - 43z^4 + 29z^2 - 6)a^{-2}+ (3z^9 - 4z^7 + 6z^5 - z)a^{-3} \\
&\quad + (z^8 + 3z^6 - 3z^4 + 3z^2 - 2)a^{-4} + (4z^5 - 4z^3)a^{-5} \\
&\quad+ (z^4 - z^2)a^{-6}\\
\Lambda_{1} &=(z^7 - 3z^5 + 2z^3)a^{6}+ (4z^8 - 14z^6 + 14z^4 - 4z^2)a^{5} \\
&\quad + (6z^9 - 19z^7 + 17z^5 - 8z^3 + 2z)a^{4}+ (4z^{10} - 2z^8 - 26z^6 + 34z^4 - 13z^2)a^{3} \\
&\quad + (z^{11} + 13z^9 - 53z^7 + 66z^5 - 42z^3 + 10z)a^{2} \\
&\quad+ (8z^{10} - 14z^8 - 10z^6 + 26z^4 - 10z^2)a^{1} \\
&\quad + (z^{11}  + 12z^9 - 49z^7 + 76z^5 - 54z^3 + 14z)a^{0}+ (4z^{10} - 6z^9 + z^6 + 9z^4 - 2z^2)a^{-1} \\
&\quad + (5z^9 - 16z^7 + 35z^5 - 28z^3 + 6z + 1z^{-1})a^{-2} \\
&\quad+ (2z^8 - z^6 + 4z^4 - 2z^2 - 1)a^{-3} + (5z^5 - 6z^3 + 1z^{-1})a^{-4} \\
&\quad+ (z^4 - z^2)a^{-5}\\
\Lambda_{D'} &=(z^5 - 2z^3)a^{5} + (3z^6 - 8z^4 + 3z^2)a^{4} + (3z^7 - 8z^5 + 5z^3 - z)a^{3} \\
&\quad+ (z^8 + z^6 - 6z^4 + 3z^2)a^{2} + (4z^7 - 11z^5 + 12z^3 - 3z)a^{1} \\
&\quad+ (z^8 - 2z^6 + 5z^4 - 3z^2 + 1)a^{0} + (z^7 - 2z^5 + 6z^3 - 3z)a^{-1} + (3z^4 - 3z^2)a^{-2} \\
&\quad+ (z^3 - z)a^{-3}
\end{aligned}
$$

\begin{figure}[b!]
  \centering
		\includegraphics[height=3cm]{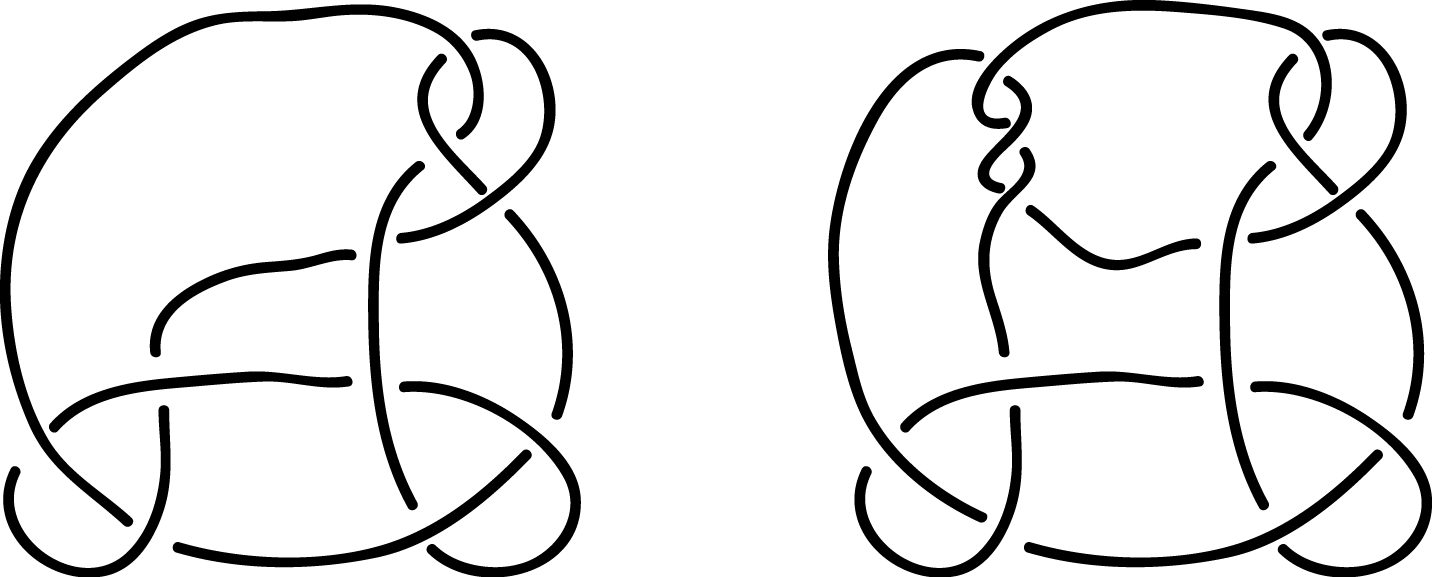}
  \caption{Diagrams $D'$ and $D_1$ for Theorem \ref{thm3_4}}
\end{figure}

$$
\begin{aligned}
V_{2n}(t) &= [t^{-(3n+1)} , t^{-n+9}]\\
V_{2}(t) &= t^{-4}-4t^{-3}+9t^{-2}-15t^{-1}+22-26t+27t^{2}-25t^{3}+21t^{4}-14t^{5}+8t^{6}-4t^{7} \\
&\quad+t^{8}\\
V_{D_1}(t) &= t^{-5/2}-5t^{-3/2}+9t^{-1/2}-16t^{1/2}+21t^{3/2}-23t^{5/2}+22t^{7/2}-20t^{9/2}+14t^{11/2} \\
&\quad-8t^{13/2}+4t^{15/2}-t^{17/2}
\end{aligned}
$$

\clearpage
\subsection*{Polynomials to prove Theorem \ref{thm3_5}.}
$$
\begin{aligned}
\Lambda_{2n} &= [- z^4 a^{-(2n+6)} , -4a^4] \\
\Lambda_{2} &= (6z^6 - 19z^4 + 17z^2 - 4)a^{4}+ (3z^9 - 3z^7 - 5z^5 + 3z^3 + z)a^{3} \\
&\quad + (7z^{10} - 22z^8 + 44z^6 - 58z^4 + 32z^2 - 6)a^{2} \\
&\quad+ (5z^{11} - 2z^9 - 13z^7 + 18z^5 - 10z^3 + z)a^{1} \\
&\quad + (z^{12} + 21z^{10} - 68z^8 + 100z^6 - 84z^4 + 36z^2 - 7)a^{0} \\
&\quad+ (10z^{11} + 4z^9 - 60z^7 + 86z^5 - 44z^3 + 7z)a^{-1} \\
&\quad + (z^{12} + 25z^{10} - 58z^8 + 51z^6 - 30z^4 + 19z^2 - 5)a^{-2} \\
&\quad+ (5z^{11} + 23z^9 - 76z^7 + 77z^5 - 34z^3 + 7z)a^{-3} \\
&\quad + (11z^{10} - z^8 - 32z^6 + 27z^4 - 4z^2 - 1)a^{-4}+ (14z^9 - 21z^7 + 6z^5)a^{-5} \\
&\quad + (11z^8 - 20z^6 + 11z^4 - 2z^2)a^{-6} + (5z^7 - 8z^5 + 3z^3)a^{-7}
+ (z^6 - z^4)a^{-8}\\
\Lambda_{1} &=(6z^5 - 13z^3 + 7z)a^{3} \\
&\quad+ (3z^8 - 5z^4 + 2z^2)a^{2} + (7z^9 - 15z^7 + 27z^5 - 26z^3 + 5z + 1z^{-1})a^{1} \\
&\quad+ (5z^{10} + 3z^8 - 17z^6 + 16z^4 - 4z^2 - 1)a^{0} \\
&\quad + (z^{11} + 22z^9 - 55z^7 + 58z^5 - 30z^3 + 3z + 1z^{-1})a^{-1} \\
&\quad+ (10z^{10} + 9z^8 - 63z^6 + 66z^4 - 19z^2)a^{-2} \\
&\quad + (z^{11} + 25z^9 - 54z^7 + 34z^5 - 10z^3 + 3z)a^{-3} \\
&\quad+ (5z^{10} + 19z^8 - 67z^6 + 59z^4 - 16z^2)a^{-4} \\
&\quad + (10z^9 - 9z^7 - 12z^5 + 11z^3 - 2z)a^{-5} \\
&\quad+ (10z^8 - 20z^6 + 13z^4 - 3z^2)a^{-6} + (5z^7 - 9z^5 + 4z^3)a^{-7} 
+(z^6 - z^4)a^{-8}\\
\Lambda_{D'} &=(z^5 - 2z^3 + z)a^{5} + (3z^6 - 5z^4 + 3z^2 - 1)a^{4} \\
&\quad + (5z^7 - 8z^5 + 5z^3 - 2z)a^{3}+ (4z^8 - 9z^4 + 7z^2 - 2)a^{2} \\
&\quad + (z^9 + 13z^7 - 30z^5 + 23z^3 - 6z)a^{1}+ (8z^8 - 4z^6 - 14z^4 + 12z^2 - 2)a^{0} \\
&\quad + (z^9 + 14z^7 - 32z^5 + 22z^3 - 4z)a^{-1}+ (4z^8 + 3z^6 - 17z^4 + 11z^2 - 2)a^{-2} \\
&\quad + (6z^7 - 10z^5 + 5z^3 - z)a^{-3} + (4z^6 - 7z^4 + 3z^2)a^{-4} + (z^5 - z^3)a^{-5}
\\
V_{2n}(t) &= [t^{-3n-12}, 3t^{-n-2}]\\
V_{2}(t) &= t^{-15}-5t^{-14}+12t^{-13}-22t^{-12}+33t^{-11}-43t^{-10}+47t^{-9}-45t^{-8}+39t^{-7} \\
&\quad-28t^{-6}+17t^{-5}-8t^{-4}+3t^{-3}\\
V_{D_1}(t)& = t^{-27/2}-5t^{-25/2}+11t^{-23/2}-19t^{-21/2}+28t^{-19/2}-34t^{-17/2}+34t^{-15/2} \\
&\quad-32t^{-13/2}+25t^{-11/2}-16t^{-9/2}+8t^{-7/2}-3t^{-5/2}
\end{aligned}
$$

\begin{figure}[b!]
  \centering
		\includegraphics[height=3cm]{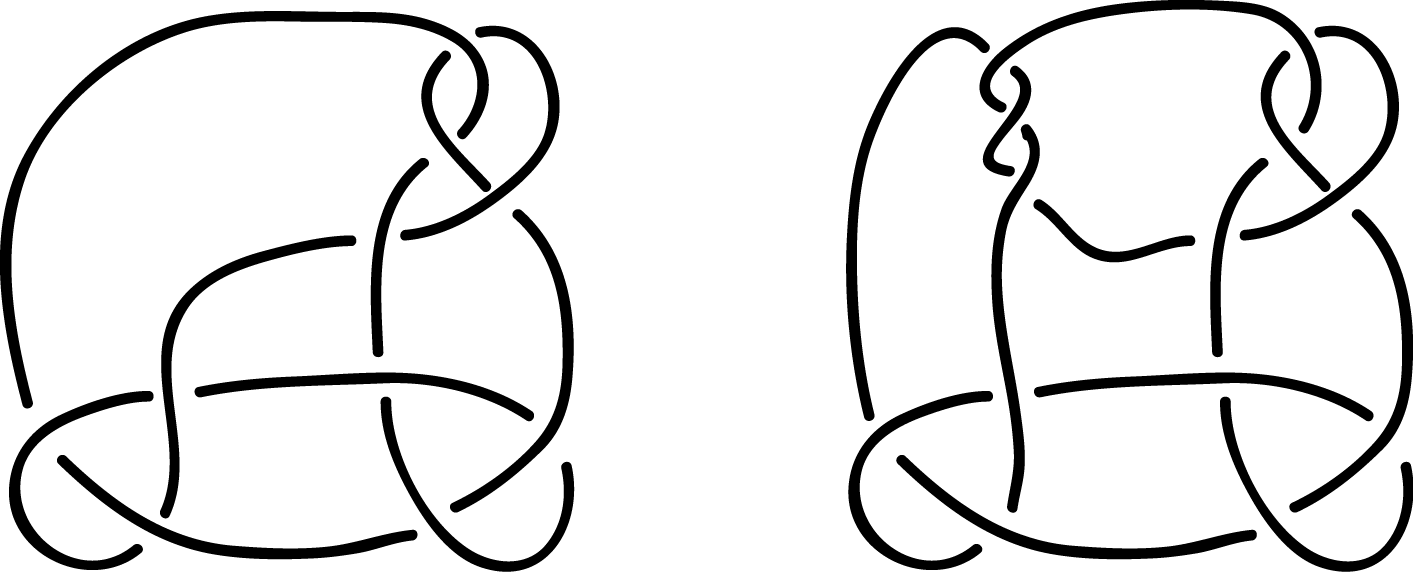}
  \caption{Diagrams $D'$ and $D_1$ for Theorem \ref{thm3_5}}
\end{figure}

\subsection*{Polynomials to prove Theorem \ref{thm3_6}.}

$$
\begin{aligned}
\Lambda_{2n} &= [z^2 a^{-(2n+6)}, (-1)^n (2n+4) a^4] \\
\Lambda_{2} &= (z^{10} - 8z^8 + 24z^6 - 34z^4 + 23z^2 - 6)a^{4}+  (2z^{11} - 14z^9 + 34z^7 - 34z^5 + 12z^3)a^{3} \\
&\quad + (z^{12} - 33z^8 + 110z^6 - 137z^4 + 73z^2 - 14)a^{2} \\
&\quad+  (7z^{11} - 42z^9 + 78z^7 - 46z^5 + z^3 + 3z)a^{1}  \\
&\quad+ (z^{12} + 8z^{10} - 77z^8 + 184z^6 - 183z^4 + 86z^2 - 16)a^{0} \\
&\quad+  (5z^{11} - 20z^9 + 7z^7 + 35z^5 - 33z^3 + 8z)a^{-1}  \\
&\quad+ (9z^{10} - 47z^8 + 83z^6 - 78z^4 + 42z^2 - 9)a^{-2} \\
&\quad+  (8z^9 - 34z^7 + 41z^5 - 24z^3 + 6z)a^{-3} + (5z^8 - 14z^6 + 3z^4 + 4z^2 - 2)a^{-4} \\
&\quad+  (3z^7 - 6z^5 + z^3)a^{-5} + (z^6 + z^4 - z^2)a^{-6} + (3z^3 - z)a^{-7} \\
&\quad+  (z^2)a^{-8}\\
\Lambda_{1} &= (z^9 - 7z^7 + 18z^5 - 21z^3 + 11z - 2z^{-1})a^{4} \\
&\quad+  (2z^{10}- 12z^8+ 24z^6- 18z^4+ 3z^2+ 1)a^{3}  \\
&\quad+ (z^{11}+ z^9- 31z^7+ 82z^5- 81z^3+ 33z - 5z^{-1})a^{2} \\
&\quad+  (7z^{10} - 35z^8 + 50z^6 - 19z^4 - 2z^2 + 1)a^{1}  \\
&\quad+ (z^{11} + 9z^9 - 67z^7 + 126z^5 - 99z^3 + 38z - 6z^{-1})a^{0} \\
&\quad+  (5z^{10} - 15z^8 - 5z^6 + 29z^4 - 16z^2 + 3)a^{-1}  \\
&\quad+ (9z^9 - 39z^7 + 53z^5 - 40z^3 + 19z - 4z^{-1})a^{-2} \\
&\quad+  (8z^8 - 30z^6 + 32z^4 - 16z^2 + 3)a^{-3} + (4z^7 - 9z^5 + 2z^3 + 2z - 1z^{-1})a^{-4} \\
&\quad+  (z^6 + 2z^4 - 4z^2 + 1)a^{-5} + (3z^3 - z)a^{-6} \\
&\quad+  (z^2)a^{-7}\\
\Lambda_{D'} &= (z^6 - 5z^4 + 7z^2 - 3)a^{3} +  (z^7 - 4z^5 + 3z^3)a^{2} + (3z^6 - 13z^4 + 14z^2 - 4)a^{1} \\
&\quad+  (z^7 - 3z^5 + z^3)a^{0} + (2z^6 - 8z^4 + 9z^2 - 3)a^{-1}  \\
&\quad+  (z^5 - 2z^3 + z)a^{-2} + (2z^2 - 1)a^{-3} \\
&\quad+  (z)a^{-4}\\
V_{2n}(t) &= [t^{-3n-8} , t^{-n+2}]\\
V_{2}(t) &= t^{-11}-3t^{-10}+5t^{-9}-8t^{-8}+11t^{-7}-13t^{-6}+14t^{-5}-13t^{-4}+11t^{-3} \\
&\quad-8t^{-2}+5t^{-1}-2+t.\\
V_{D_1}(t)& = t^{-19/2}-3t^{-17/2}+6t^{-15/2}-9t^{-13/2}+11t^{-11/2}-13t^{-9/2}+12t^{-7/2} \\
&\quad-11t^{-5/2}+8t^{-3/2}-5t^{-1/2}+2t^{1/2}-t^{3/2}
\end{aligned}
$$

\begin{figure}[!ht]
  \centering
		\includegraphics[height=3cm]{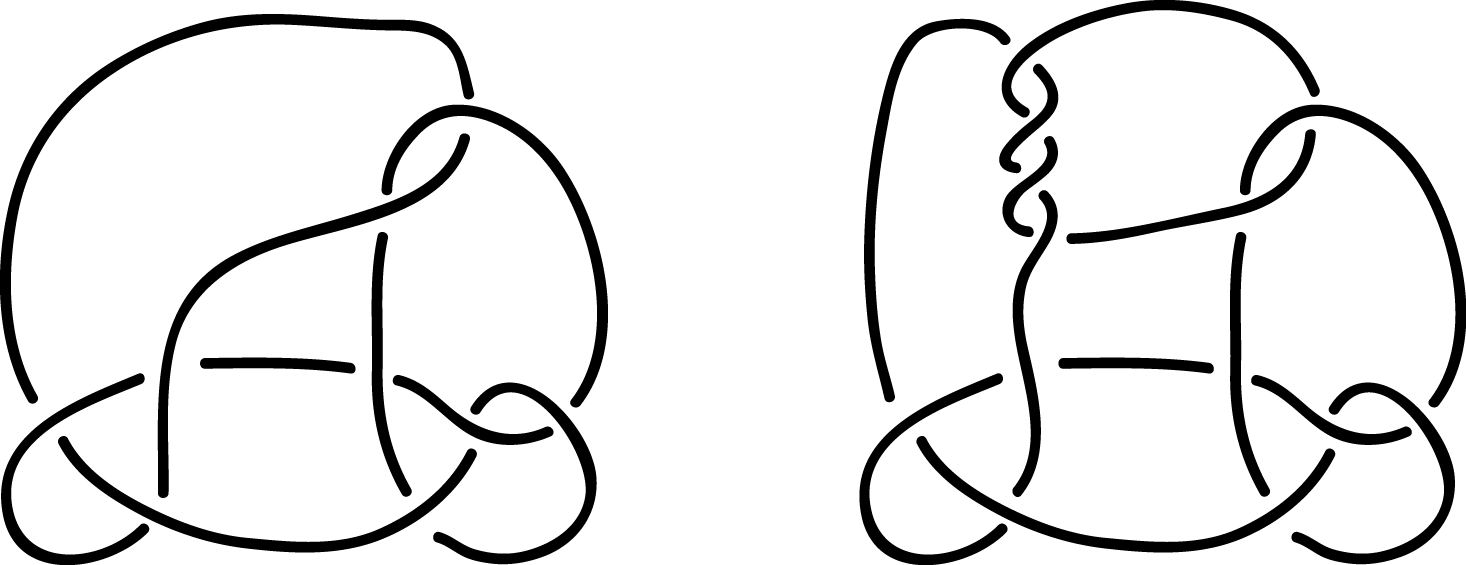}
  \caption{Diagrams $D'$ and $D_1$ for Theorem \ref{thm3_6}}
\end{figure}

\clearpage

\subsection*{Polynomials to prove Theorem \ref{thm3_7}.}

$$
\begin{aligned}
\Lambda_{2n} &= [z^3 a^{-(2n+5)}, (-1)^{n+1} 2 za^5] \\
\Lambda_{2} &= (z^9 - 6z^7 + 12z^5 - 9z^3 + 2z)a^{5}+  (3z^{10} - 18z^8 + 38z^6 - 35z^4 + 14z^2 - 2)a^{4} \\
&\quad + (3z^{11} - 12z^9 + 6z^7 + 20z^5 - 21z^3 + 5z)a^{3} \\
&\quad+  (z^{12} + 7z^{10} - 58z^8 + 122z^6 - 112z^4 + 50z^2 - 9)a^{2} \\
&\quad + (8z^{11} - 30z^9 + 20z^7 + 17z^5 - 17z^3 + 5z)a^{1} \\
&\quad+  (z^{12} + 13z^{10} - 77z^8 + 137z^6 - 125z^4 + 61z^2 - 11)a^{0} \\
&\quad + (5z^{11} - 9z^9 - 17z^7 + 32z^5 - 17z^3 + 3z)a^{-1} \\
&\quad+  (9z^{10} - 33z^8 + 49z^6 - 53z^4 + 30z^2 - 7)a^{-2} \\
&\quad + (8z^9 - 24z^7 + 28z^5 - 18z^3 + 3z)a^{-3}+  (4z^8 - 4z^6 - z^4 + 4z^2 - 2)a^{-4} \\
&\quad + (z^7 + 5z^5 - 5z^3 + 2z)a^{-5} + (4z^4 - z^2)a^{-6} \\
&\quad+  (z^3)a^{-7}\\
\Lambda_{1} &= (z^8 - 5z^6 + 8z^4 - 4z^2)a^{5} \\
&\quad+  (3z^9 - 15z^7 + 26z^5 - 19z^3 + 5z)a^{4} + (3z^{10}- 9z^8 + 14z^4 - 7z^2)a^{3} \\
&\quad+  (z^{11} + 8z^9 - 49z^7 + 79z^5 - 56z^3 + 17z - 1z^{-1})a^{2} \\
&\quad + (8z^{10} - 22z^8 + 2z^6 + 18z^4 - 7z^2 + 1)a^{1} \\
&\quad+  (z^{11} + 14z^9 - 65z^7 + 91z^5 - 64z^3 + 19z - 1z^{-1})a^{0} \\
&\quad + (5z^{10} - 5z^8 - 21z^6 + 30z^4 - 11z^2)a^{-1} \\
&\quad+  (9z^9 - 29z^7 + 41z^5 - 31z^3 + 8z)a^{-2} + (7z^8 - 18z^6 + 23z^4 - 9z^2)a^{-3} \\
&\quad+  (2z^7 + 3z^5 - 3z^3 + z)a^{-4} + (5z^4 - 2z^2)a^{-5} \\
&\quad+  (z^3)a^{-6}\\
\Lambda_{D'} &= (z^6 - 4z^4 + 4z^2 - 1)a^{4} \\
&\quad+  (2z^7 - 8z^5 + 8z^3 - 2z)a^{3} + (z^8 - z^6 - 6z^4 + 6z^2 - 1)a^{2} \\
&\quad+  (4z^7 - 14z^5 + 13z^3 - 4z)a^{1} + (z^8 - z^6 - 2z^4 + z^2 - 1)a^{0} \\
&\quad+  (2z^7 - 6z^5 + 8z^3 - 3z)a^{-1} + z^6a^{-2} + (3z^3 - z)a^{-3} 
+ (z^2)a^{-4}\\
V_{2n}(t) &= [-t^{-3n-6}, - t^{-n+4}]\\
V_{2}(t) &= -t^{-9}+4t^{-8}-9t^{-7}+14t^{-6}-19t^{-5}+23t^{-4}-23t^{-3}+21t^{-2}-17t^{-1}\\
&\quad+12-6t+3t^{2}-t^{3}\\
V_{D_1}(t) &= -t^{-15/2}+5t^{-13/2}-10t^{-11/2}+14t^{-9/2}-19t^{-7/2}+20t^{-5/2}-19t^{-3/2}\\
&\quad +16t^{-1/2}-12t^{1/2}+6t^{3/2}-3t^{5/2}+t^{7/2}
\end{aligned}
$$

\begin{figure}[h!]
  \centering
		\includegraphics[height=3cm]{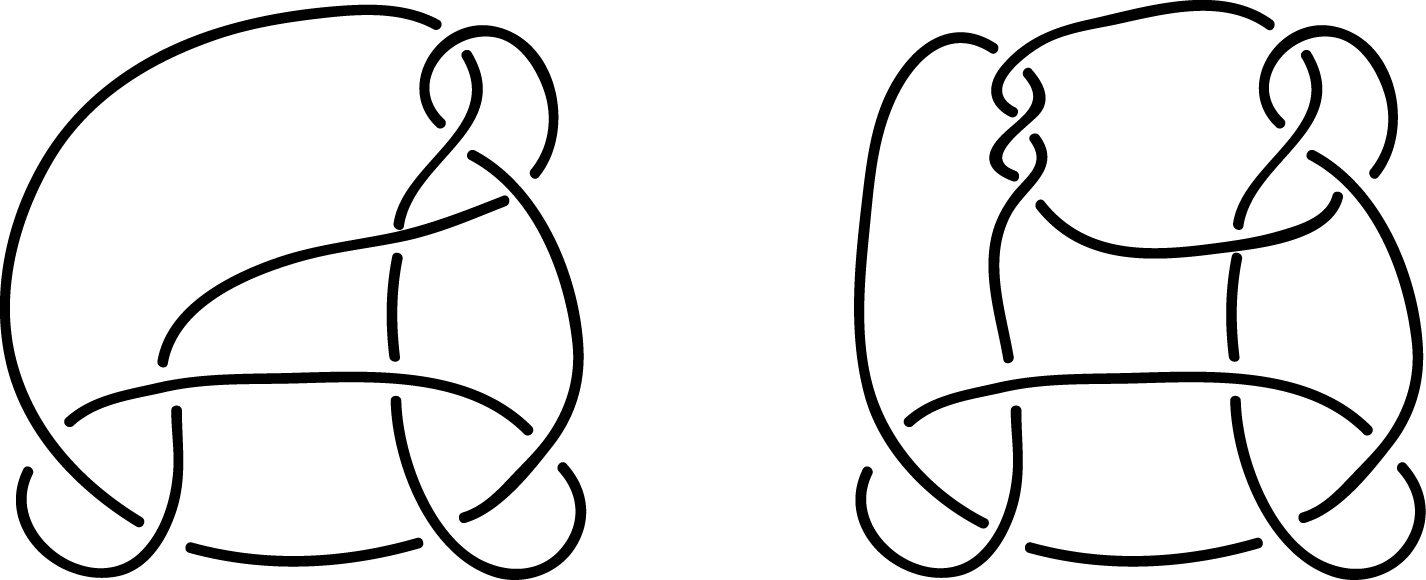}
  \caption{Diagrams $D'$ and $D_1$ for Theorem \ref{thm3_7}}
\end{figure}

\section*{Acknowledgments}
This work was supported  in part by the National Research Foundation of Korea Grant
funded by the Korean Government (NRF-2013-056086).

\end{document}